\numberwithin{equation}{section}
\newtheorem{thm}{Theorem}[section]
\newtheorem{lem}[thm]{Lemma}
\newtheorem{prop}[thm]{Proposition}
\theoremstyle{definition}
\newtheorem{defn}[thm]{Definition}
\newtheorem{rem}[thm]{Remark}
\newtheorem{ex}[thm]{Example}}
\def\l{\lambda}
\def\l{\lambda}
\def\b{\begin{equation}}
\def\e{\end{equation}}
\def\la{\label}
\def\non{\nonumber}
\title{Hurwitz integrality of the power series expansion of the sigma function for a telescopic curve}
\author{Takanori Ayano\footnote{Osaka Central Advanced Mathematical Institute, Osaka Metropolitan University, \newline \hspace{3ex} 3-3-138, Sugimoto, Sumiyoshi-ku, Osaka, 558-8585, Japan. \newline \hspace{3ex} Email: ayano@omu.ac.jp
\newline \hspace{3ex} 2020 Mathematics Subject Classification. Primary 14H42; Secondary 14K25, 32A05. 
\newline \hspace{3ex} Key Words and Phrases. sigma function, telescopic curve, power series expansion, Hurwitz integrality.
}}
\date{}
\begin{document}
\maketitle

\begin{abstract}
A telescopic curve is a certain algebraic curve defined by $m-1$ equations in the affine space of dimension $m$, which can be a hyperelliptic curve and an $(n,s)$ curve as a special case. 
The sigma function $\sigma(u)$ associated with the telescopic curve of genus $g$ is a holomorphic function on $\mathbb{C}^g$. 
For a subring $R$ of $\mathbb{C}$ and variables $u={}^t(u_1,\dots, u_g)$, let
\[R\langle\langle u \rangle\rangle=\left\{\sum_{i_1,\dots,i_g\ge0}\kappa_{i_1,\dots,i_g}\frac{u_1^{i_1}\cdots u_g^{i_g}}{i_1!\cdots i_g!}\;\middle|\;\kappa_{i_1,\dots,i_g}\in R\right\}.\]
If the power series expansion of a holomorphic function $f(u)$ on $\mathbb{C}^g$ around the origin belongs to $R\langle\langle u \rangle\rangle$, then $f(u)$ is said to be Hurwitz integral over $R$. 
In this paper, we show that the sigma function $\sigma(u)$ associated with the telescopic curve is Hurwitz integral over the ring generated by the coefficients of the defining equations of the curve and $\frac{1}{2}$ over $\mathbb{Z}$. 
Further, we show that $\sigma(u)^2$ is Hurwitz integral over the ring generated by the coefficients of the defining equations of the curve over $\mathbb{Z}$. 
Our results are a generalization of the results of Y. \^Onishi for $(n,s)$ curves to telescopic curves. 
\end{abstract}

\section{Introduction}

%The theory of the elliptic function was the one of the main subjects of the research of mathematics in the nineteenth century. Now the beautiful theory of the elliptic function is constructed and is applied to many f\/ields such as mathematical physics, integrable system, number theory, engineering, and cryptography. In integrable system, it is well known that the elliptic function gives an exact solution of some nonlinear dif\/ferential equations. 
%In cryptography, the cryptosystem using the elliptic curves is used widely. 
%Recently, with the scientific development, we have to analyze many complicated nonlinear phenomena and it is necessary to give exact solutions of many nonlinear dif\/ferential equations in order to analyze the phenomena precisely. 
%In cryptography, it is necessary to make a wider class of algebraic curves available to the cryptosystem for assuring the safety of cryptosystem. 
%Therefore it is very important to construct the basic theory of the Abelian function, which is a generalization of the elliptic function to several variables. The sigma function plays an important role in the theory of the Abelian function.
The Weierstrass's elliptic sigma function plays important roles in the theory of the Weierstrass's elliptic function.  
F.~Klein \cite{Kl1,Kl2} generalized the Weierstrass's elliptic sigma function to the multivariate sigma functions associated with hyperelliptic curves. 
V. M. Buchstaber, V. Z. Enolski, and D. V. Leykin improved the theory of the Klein's hyperelliptic sigma functions and generalized it to more general plane algebraic curves called $(n,s)$ curves 
(e.g., \cite{BEL-97-1, BEL-97-2, BEL-99-R, BEL-2000, BEL-2012, BEL-2018, BL-2005, BEL-99-2, EEL}). 
The sigma function is obtained by modifying the Riemann's theta function so as to be modular invariant, i.e., it does not depend on the choice of a canonical homology basis. 
Further, the sigma function has some remarkable algebraic properties that it is directly related with the defining equations of an algebraic curve. 
Namely, the coefficients of the power series expansion of the sigma function around the origin become polynomials of the coefficients of the defining equations of the algebraic curve. 
This property is important in the study of differential structure of Abelian functions (cf. \cite{CN, N-2011}). 
Further, from this property of the sigma function, the sigma function has a limit when the coefficients of the defining equations of the curve are specialized in any way, which is important in the study of integrable systems (cf. \cite{BEN2020, N-2018}). 
It is one of the central problems to determine the coefficients of the power series expansion of the sigma function. 
This problem is studied in many papers (e.g., \cite{Baker2, Bolza0, Bolza, BL-2005, O5, EO2019, N1, N-2010-2, O-2018}). 
%The recurrence relations of the coefficients of power series expansions of the sigma functions are studied in many papers. 
%Weierstrass \cite{Weir} showed that the elliptic sigma function $\sigma(u;\l_4,\l_6)$ associated with the elliptic curve $V_1$ satisfies the the heat equation and %following system of equations
%\begin{eqnarray*}
%4\l_4\s_{\l_4}+6\l_6\s_{\l_6}-u\s_u+\s=0,\\
%6\l_6\s_{\l_4}-\frac{4}{3}\l_4^2\s_{\l_6}-\frac{1}{2}\s_{uu}+\frac{1}{6}\l_4u^2\s=0.
%\end{eqnarray*}
%The second equation of this system is the heat equation or, equivalently, the Schr\"odinger equation of type 
%$\ell_2\sigma = \frac{1}{2}H_2 \sigma$, where $\ell_2=6\l_6\frac{\p}{\p\l_4}-\frac{4}{3}\l_4^2\frac{\p}{\p\l_6}$ and $H_2 = \frac{\partial^2}{\partial u^2}-\frac{1}{3}\lambda_4 u^2$.
%gave recurrence relations of the coefficients of series expansion of the elliptic sigma function.
%Buchstaber and Leykin succeeded in generalizing the theory of the heat equations to the sigma functions of higher genus
%curves (cf. \cite{BL-2004}, \cite{BL-2005}, and \cite{BL-2008}).
%In \cite{O5}, the detailed proof of their theory is given. In \cite{BL-2005} and \cite{O5}, the
%recurrence relations of the coefficients of series expansions of the sigma functions associated with the hyperelliptic curves are given based on the heat equations.
%In \cite{EO2019}, the theory of the heat equations and the recurrence relations of the coefficients of the series expansion of the sigma function are constructed for the elliptic curves defined by the most general Weierstrass equation.

%\vspace{1ex}

%\textbf{Notations}

Throughout the present paper, we denote by $\mathbb{N}, \mathbb{Z}_{\ge0}, \mathbb{Z}, \mathbb{Q}$, and $\mathbb{C}$ the sets of positive integers, non-negative integers, integers, rational numbers, and complex numbers, respectively.
For a subring $R$ of $\mathbb{C}$ and a set of some complex numbers $\mathcal{A}$, we denote by $R[\mathcal{A}]$ the ring generated by elements in $\mathcal{A}$ over $R$. 
For positive integers $k_1,\dots,k_n$, let $\langle k_1,\dots, k_n \rangle=\{\ell_1k_1+\cdots+\ell_nk_n\:|\:\ell_1,\dots, \ell_n\in\mathbb{Z}_{\ge0}\}$ and we denote by $\mathrm{gcd}(k_1,\dots,k_n)$ the greatest common divisor of $k_1,\dots,k_n$. 
For a subring $R$ of $\mathbb{C}$ and variables $z={}^t(z_1,\dots, z_n)$, let
\[R\langle\langle z \rangle\rangle=R\langle\langle z_1,\dots,z_n\rangle\rangle=\left\{\sum_{i_1,\dots,i_n\ge0}\kappa_{i_1,\dots,i_n}\frac{z_1^{i_1}\cdots z_n^{i_n}}{i_1!\cdots i_n!}\;\middle|\;\kappa_{i_1,\dots,i_n}\in R\right\}.\]
%For a holomorphic function $f(u)=f(u_1,\dots,u_d)$ on $\mathbb{C}^d$, we expand $f(u)$ around the origin as 
%\[f(u)=\sum_{n_1,\dots,n_d\ge0}\zeta_{n_1,\dots,n_d}\frac{u_1^{n_1}\cdots u_d^{n_d}}{n_1!\cdots n_d!},\;\;\;\;\zeta_{n_1,\dots,n_d}\in\mathbb{C}.\]]
If the power series expansion of a holomorphic function $f(z)=f(z_1,\dots,z_n)$ on $\mathbb{C}^n$ around the origin belongs to $R\langle\langle z \rangle\rangle$, then we write $f(z)\in R\langle\langle z \rangle\rangle$ and $f(z)$ is said to be \textit{Hurwitz integral} over $R$. 
For a ring $R$ and a positive integer $n$, let $M_n(R)$ be the set of the $n\times n$ matrices such that all the components are contained in $R$. 
For a semigroup $Z$ and a positive integer $n$, let $nZ=\{nr\;|\;r\in Z\}$. 
We denote by $\emptyset$ the empty set.

In \cite{Miu}, Miura introduced a certain canonical form, Miura canonical form, for defining equations of any non-singular algebraic curve. 
A telescopic curve \cite{Miu} is a special curve for which Miura canonical form is easy to determine. 
For an integer $m\ge 2$, let $A_m=(a_1, \dots, a_m)$ be a sequence of positive integers
such that $\mathrm{gcd}(a_1, \dots, a_m)=1$, $a_i\ge2$ for any $i$, and 
\[
\frac{a_i}{d_i}\in \left\langle\frac{a_1}{d_{i-1}}, \dots, \frac{a_{i-1}}{d_{i-1}}\right\rangle,
\quad
2\le i\le m,
\]
where $d_i=\mathrm{gcd}(a_1, \dots, a_i)$. 
%Here, $\mathrm{gcd}(a_1,...,a_m)$ and $\mathrm{gcd}(a_1,...,a_i)$ mean the greatest common divisors of $(a_1,...,a_m)$ and $(a_1,...,a_i)$, respectively. 
Let
\[
B(A_m)=\left\{(\ell_1, \dots, \ell_m)\in {\mathbb Z}_{\geq 0}^m\,\middle|\,0\le \ell_i\le \frac{d_{i-1}}{d_i}-1\,\, \text{ for }\,\,2\le i\le m\right\}.
\]
For any $2\le i\le m$, we have the unique element 
$(\ell_{i,1}, \dots, \ell_{i,m})\in B(A_m)$ satisfying 
\[\sum_{j=1}^m a_j \ell_{i,j}= a_i\frac{d_{i-1}}{d_i}.\]
For any $2\le i\le m$, we have $\ell_{i,j}=0$ for $j\geq i$. 
Consider the $m-1$ polynomials in $m$ variables $X=(X_1,\dots, X_m)$ given by
\[
F_i(X)=X_i^{d_{i-1}/d_i}-\prod_{j=1}^{i-1} X_j^{\ell_{i,j}}-\sum \lambda_{a_id_{i-1}/d_i-\sum_{k=1}^m a_kj_k}^{(i)}X_1^{j_1}\cdots X_m^{j_m},
\]
where $2\leq i\leq m$, $\lambda_{a_id_{i-1}/d_i-\sum_{k=1}^m a_kj_k}^{(i)}\in\mathbb{C}$, 
and the summation is taken over 
$(j_1,\dots, j_m)\in B(A_m)$ such that 
\[
\sum_{k=1}^m a_kj_k<a_i\frac{d_{i-1}}{d_i}.
\]
The algebraic curve defined by the $m-1$ equations $F_i=0$, $2\le i\le m$, in ${\mathbb C}^m=(X_1,\dots,X_m)$ is called the telescopic curve associated with $A_m$. 
%The genus $g$ of the telescopic curve is given by 
%\[g=\frac{1}{2}\left\{1-a_1+\sum_{i=2}^m\left(\frac{d_{i-1}}{d_i}-1\right)a_i\right\}.\]
We denote by ${\boldsymbol \lambda}$ the set of all $\lambda^{(i)}_j$. 
For a subset $\mathfrak{S}$ of ${\boldsymbol \lambda}$, 
we set $\lambda_{j, \mathfrak{S}}^{(i)}=\lambda_j^{(i)}/2$ if $\lambda_j^{(i)}\in \mathfrak{S}$ and we set $\lambda_{j, \mathfrak{S}}^{(i)}=\lambda_j^{(i)}$ 
if $\lambda_j^{(i)}\notin \mathfrak{S}$. 
We denote by ${\boldsymbol \l}_{\mathfrak{S}}$ the set of all $\lambda_{j, \mathfrak{S}}^{(i)}$. 
The sigma function $\sigma(u)$ associated with the telescopic curve of genus $g$ is a holomorphic function on $\mathbb{C}^g$. 
For $m=2$ and $A_2=(n,s)$ with relatively prime positive integers $n$ and $s$ such that $2\le n<s$, it is called the $(n,s)$ curve (cf. \cite{BEL-99-R}). 
For $m=2$, for simplicity, we denote $\lambda_j^{(2)}$ by $\lambda_j$. 
The polynomial $F_2$ defining the $(n,s)$ curve is given by 
\begin{equation}
F_2(X)=X_2^n-X_1^s-\sum\lambda_{ns-nj_1-sj_2}X_1^{j_1}X_2^{j_2},\label{2024.2.1.1114}
\end{equation}
where the summation is taken over $(j_1,j_2)\in{\mathbb Z}_{\geq 0}^2$ such that $nj_1+sj_2<ns$. 
Note that $(2,s)$ curves are none other than hyperelliptic curves.

We consider the $(n,s)$ curve. 
In \cite{N1}, an expression of the sigma function associated with the $(n,s)$ curve in terms of algebraic functions and integrals of an algebraic differential form is derived. 
In \cite{N-2010-2}, an expression of the sigma function associated with the $(n,s)$ curve in terms of the tau function of the KP-hierarchy is derived. 
In \cite{N1,N-2010-2}, by using these expressions of the sigma function of the $(n,s)$ curve, it is proved that we have $\sigma(u)\in\mathbb{Q}[{\boldsymbol \lambda}]\langle\langle u \rangle\rangle$ for the $(n,s)$ curve. 
Let $\mathfrak{C}$ be the set of $\lambda_j$ which is the coefficient of $X_1^{j_1}X_2^{j_2}$ with $(j_1,j_2)=(\mbox{odd}, \mbox{odd})$ in (\ref{2024.2.1.1114}).  
In \cite{O-2018}, a special local parameter of the $(n,s)$ curve around $\infty$ is introduced, which is called the arithmetic local parameter, and by using the arithmetic local parameter and the expression of the sigma function associated with the $(n,s)$ curve in terms of the tau function of the KP-hierarchy, it is proved that we have $\sigma(u)\in\mathbb{Z}[{\boldsymbol \l}_{\mathfrak{C}}]\langle\langle u \rangle\rangle$ and $\sigma(u)^2\in\mathbb{Z}[{\boldsymbol \l}]\langle\langle u\rangle\rangle$ for the $(n,s)$ curve (\cite[Theorem 2.3]{O-2018}). 
In \cite{Ouniversal}, in the case of the $(2,3)$ curve, the Hurwitz integrality of the elliptic sigma function is proved by an approach different from \cite{O-2018}. 
In \cite{Ouniversal,O-2018}, relationships of the Hurwitz integrality of the sigma functions with number theory are discussed.

%Let $V$ be a telescopic curve of genus $g$ associated with $A_m=(a_1, \dots, a_m)$. 
We consider the telescopic curve associated with $A_m$. 
In \cite{Aya2}, an expression of the sigma function associated with the telescopic curve in terms of algebraic functions and integrals of an algebraic differential form is derived. 
Further, in \cite{Aya2}, an expression of the sigma function associated with the telescopic curve in terms of the tau function of the KP-hierarchy is also derived. 
In \cite{Aya2}, by using these expressions of the sigma function of the telescopic curve, it is proved that we have $\sigma(u)\in\mathbb{Q}[\boldsymbol{\lambda}]\langle\langle u\rangle\rangle$ for the telescopic curve. 
We assign degrees for $X_k$ and $\lambda_j^{(i)}$ as $\deg X_k=a_k$ and $\deg\lambda_j^{(i)}=j$. 
Let $\mathfrak{A}=\{\lambda_j^{(i)}\;|\;\mbox{$j$ is odd}\}$. 
In this paper, we generalize the arithmetic local parameter of the $(n,s)$ curve to the case of the telescopic curve (Section \ref{2022.4.3.111}). 
By using the arithmetic local parameter of the telescopic curve and the expression of the sigma function associated with the telescopic curve in terms of the tau function of the KP-hierarchy, 
we show $\sigma(u)\in\mathbb{Z}[{\boldsymbol \l}_{\mathfrak{A}}]\langle\langle u \rangle\rangle$ and $\sigma(u)^2\in\mathbb{Z}[\boldsymbol{\lambda}]\langle\langle u\rangle\rangle$ for the telescopic curve (Theorem \ref{maintheorem}). 
Let $I$ be the ideal of $\mathbb{C}[X_1,\dots,X_m]$ generated by $F_2,\dots,F_m$. 
For $f_1, f_2\in\mathbb{C}[X_1,\dots,X_m]$, if $f_1-f_2\in I$, we say that $f_1$ is congruent to $f_2$ modulo $I$. 
For a subring $R$ of $\mathbb{C}$, let $\mathcal{P}(R)$ be the set of $\sum \mathfrak{f}_{i_1,\dots,i_m}X_1^{i_1}\cdots X_m^{i_m}\in R[X_1,\dots,X_m]$ 
such that $\mathfrak{f}_{i_1,\dots,i_m}\in 2R$ or $(i_1,\dots,i_m)\in2{\mathbb Z}_{\geq 0}^m$.  
For $f\in\mathbb{Z}[{\boldsymbol \l}][X_1,\dots,X_m]$, 
let $\mathcal{S}(f)$ be the set of the subsets $\mathfrak{S}$ of ${\boldsymbol \l}$ such that 
$f\in\mathcal{P}(\mathbb{Z}[{\boldsymbol \l}_{\mathfrak{S}}])$. 
Let $G$ be the $(m-1)\times m$ matrix defined by 
\[
G=\left(\frac{\partial F_i}{\partial X_j}\right)_{2\leq i\leq m, \;1\le j\le m}
\]
and $G_k$ be the $(m-1)\times (m-1)$ matrix obtained by deleting the $k$-th column from $G$. 
Let $\langle A_m\rangle=\langle a_1,\dots, a_m\rangle$ and $g$ be the genus of the telescopic curve. 
We take an integer $k$ such that $1\le k\le m$ and $a_k$ is odd. 
We take a polynomial $f$ in $\mathbb{Z}[{\boldsymbol \l}][X_1,\dots,X_m]$ which is congruent to $\det G_k$ modulo $I$. 
In Lemma \ref{2023.10.22.2222}, we prove that if $\mathfrak{S}\in\mathcal{S}(f)$, we have $\sigma(u)\in\mathbb{Z}[{\boldsymbol \l}_{\mathfrak{S}}]\langle\langle u \rangle\rangle$.  
In Lemma \ref{2023.10.22.2222}, we have the following problems: 

\begin{itemize}
\item For any telescopic curve and any integer $k$ such that $1\le k\le m$ and $a_k$ is odd, is there a polynomial $f$ in $\mathbb{Z}[{\boldsymbol \l}][X_1,\dots,X_m]$ such that $f$ is congruent to $\det G_k$ modulo $I$ and $\mathcal{S}(f)\neq\emptyset$?

\item Find a polynomial $\mathscr{F}$ in $\mathbb{Z}[{\boldsymbol \l}][X_1,\dots,X_m]$ such that $\mathscr{F}$ gives the best result among all the polynomials in $\mathbb{Z}[{\boldsymbol \l}][X_1,\dots,X_m]$ which are congruent to $\det G_k$ modulo $I$. 

\end{itemize}

We solve these problems in Theorem \ref{2023.11.26.14598}, where for any integer $k$ such that $1\le k\le m$ and $a_k$ is odd we prove the following results: 

\vspace{1ex}

\noindent (i) The determinant $\det G_k$ is congruent to
\begin{equation}
\mathscr{F}=(-1)^{k+1}a_kX_1^{2h_1}\cdots X_m^{2h_m}+\sum \mathfrak{p}_{i_1,\dots,i_m}X_1^{i_1}\cdots X_m^{i_m}\label{2023.12.18.222}
\end{equation}
modulo $I$, where
\begin{itemize}

\item $(h_1,\dots,h_m)\in {\mathbb Z}_{\geq 0}^m$ such that $2\sum_{j=1}^ma_jh_j=2g-1+a_k$,

\item the summation in (\ref{2023.12.18.222}) is taken over $(i_1,\dots,i_m)\in {\mathbb Z}_{\geq 0}^m$ such that $\sum_{j=1}^ma_ji_j<2g-1+a_k$, 

\item $\mathfrak{p}_{i_1,\dots,i_m}\in\mathbb{Z}[{\boldsymbol \l}]$, 

\item $\mathscr{F}$ is homogeneous of degree $2g-1+a_k$ with respect to ${\boldsymbol \l}$ and $X_1,\dots,X_m$, 

\item if $(i_1,\dots,i_m)\neq(i_1',\dots,i_m')$ and $\mathfrak{p}_{i_1,\dots,i_m}\mathfrak{p}_{i_1',\dots,i_m'}\neq0$, then $\sum_{j=1}^ma_ji_j\neq\sum_{j=1}^ma_ji_j'$, 

\item if $(i_1,\dots,i_m)\notin2{\mathbb Z}_{\geq 0}^m$ and $\mathfrak{p}_{i_1,\dots,i_m}\neq0$, then $\sum_{j=1}^ma_ji_j\notin 2\langle A_m\rangle$. 
\end{itemize}

\noindent (ii) We have $\mathcal{S}(\mathscr{F})\neq\emptyset$. For any $\mathfrak{S}\in \mathcal{S}(\mathscr{F})$, we have $\sigma(u)\in\mathbb{Z}[{\boldsymbol \l}_{\mathfrak{S}}]\langle\langle u \rangle\rangle$. 

\vspace{1ex}

\noindent (iii) For any $f\in\mathbb{Z}[{\boldsymbol \l}][X_1,\dots,X_m]$ which is congruent to $\det G_k$ modulo $I$, we have $\mathcal{S}(f)\subseteq\mathcal{S}(\mathscr{F})$. 

\vspace{1ex}

%Theorems \ref{maintheorem} and \ref{2023.11.26.14598} are the main results of this paper. 
We can apply Theorems \ref{maintheorem} and \ref{2023.11.26.14598} to any telescopic curve. 
For the $(n,s)$ curve, \cite[Theorem 2.3]{O-2018} gives the better result than Theorem \ref{maintheorem} (i) (Remark \ref{2023.12.19.1}). 
When we apply Theorem \ref{2023.11.26.14598} to the $(n,s)$ curve, we obtain the same result as \cite[Theorem 2.3]{O-2018} (Remark \ref{2023.12.19.2}). 
Therefore, Theorem \ref{2023.11.26.14598} includes \cite[Theorem 2.3]{O-2018}. 
In Examples \ref{2023.12.19.3}, \ref{2023.12.19.4}, and \ref{2023.12.19.5}, we consider the case of $m=3$ and $A_3=(4,6,5)$, $(4,6,7)$, and $(6,9,5)$, respectively. 
For these curves, Theorem \ref{2023.11.26.14598} gives the better result than Theorem \ref{maintheorem} (i).

In the case of hyperelliptic curves, more precise properties on the power series expansion of the sigma function are known. 
We consider the hyperelliptic curve of genus $g$ defined by 
\[X_2^2=X_1^{2g+1}+\lambda_4X_1^{2g-1}+\lambda_6X_1^{2g-2}+\cdots+\lambda_{4g}X_1+\lambda_{4g+2}.\]
By applying \cite[Theorem 2.3]{O-2018} to this curve, we obtain $\sigma(u)\in\mathbb{Z}[\{\lambda_{2i}\}_{i=2}^{2g+1}]\langle\langle u\rangle\rangle$. 
%Let $E$ be the elliptic curve defined by
%\[y^2=x^3+\lambda_4x+\lambda_6,\;\;\;\;\lambda_4,\lambda_6\in\mathbb{C}.\]
%The elliptic sigma function $\sigma(u)$ associated with $E$ is the entire function on $\mathbb{C}$. 
In \cite{Bunkova1,Ouniversal}, it is proved that we have $\sigma(u)\in\mathbb{Z}[2\l_4,8\l_6]\langle\langle u\rangle\rangle$ for $g=1$. 
In \cite{Bunkova1}, it is conjectured that we have $\sigma(u)\in\mathbb{Z}[2\l_4,24\l_6]\langle\langle u\rangle\rangle$ for $g=1$. 
%Let $V$ be the hyperelliptic curve of genus 2 defined by 
%\[y^2=x^5+\lambda_4x^3+\lambda_6x^2+\lambda_8x+\lambda_{10},\;\;\;\;\lambda_4,\lambda_6,\lambda_8,\lambda_{10}\in\mathbb{C}.\]
%The sigma function $\sigma(u)$ associated with $V$ is the entire function on $\mathbb{C}^2$. 
In \cite[Corollary 2]{AB2020}, it is proved that we have $\sigma(u)\in\mathbb{Z}[\lambda_4,\lambda_6,\lambda_8,2\lambda_{10}]\langle\langle u\rangle\rangle$ for $g=2$.

\section{Telescopic sequences}

For an integer $m\geq 2$, let $A_m=(a_1,\dots, a_m)$ be a sequence of positive integers
such that $\mathrm{gcd}(a_1,\dots, a_m)=1$ and 
\[
\frac{a_i}{d_i}\in \left\langle\frac{a_1}{d_{i-1}}, \dots, \frac{a_{i-1}}{d_{i-1}}\right\rangle, 
\quad
2\leq i\leq m,
\]
where $d_i=\mathrm{gcd}(a_1,\dots, a_i)$. 
This sequence $A_m$ is called a \textit{telescopic sequence} (cf. \cite{KP}). 
Let $\langle A_m\rangle=\langle a_1,\dots, a_m\rangle$. 
The set $\mathbb{Z}_{\ge0}\backslash\langle A_m\rangle$ is a finite set. 
The number of elements of $\mathbb{Z}_{\ge0}\backslash\langle A_m\rangle$ is called the \textit{genus} of $\langle A_m\rangle$. 
Let $g$ be the genus of $\langle A_m\rangle$. 
Then the following relation holds: 
\begin{equation}
g=\frac{1}{2}\left\{1-a_1+\sum_{i=2}^m\left(\frac{d_{i-1}}{d_i}-1\right)a_i\right\}\label{2023.10.18.1}
\end{equation}
(cf. \cite{NW}). 
The following lemma is important in the proof of Theorem \ref{2023.11.26.14598}. 

\begin{lem}\label{2023.10.26.1}
Let $a$ be a positive odd integer such that $a\in\langle A_m \rangle$. 
Then we have $2g-1+a\in 2\langle A_m \rangle$. 
\end{lem}

\begin{proof}
%We have 
%\[2g-1=-a_1+\sum_{i=2}^m\left(\frac{d_{i-1}}{d_i}-1\right)a_i.\]
Let $a=k_1a_1+\cdots+k_ma_m$, where $k_1,\dots,k_m\in\mathbb{Z}_{\ge0}$. 
We prove this lemma by induction on $m$. 
First, we prove this lemma for $m=2$. 
Note that $k_1a_1+k_2a_2$ is odd. 
If $(a_1,k_1)=(\mbox{odd}, \mbox{odd})$, then we have $(a_2,k_2)=(\mbox{odd}, \mbox{even})$, $(\mbox{even}, \mbox{odd})$, or $(\mbox{even}, \mbox{even})$. 
If $(a_1,k_1)=(\mbox{odd}, \mbox{even})$, $(\mbox{even}, \mbox{odd})$, or $(\mbox{even}, \mbox{even})$, then we have $(a_2,k_2)=(\mbox{odd}, \mbox{odd})$. 
In the case of $(a_1,k_1,a_2,k_2)=(\mbox{odd}, \mbox{odd}, \mbox{odd}, \mbox{even})$, $(\mbox{odd}, \mbox{odd}, \mbox{even}, \mbox{even})$, or $(\mbox{even}, \mbox{odd}, \mbox{odd}, \mbox{odd})$, 
we have 
\[2g-1+a=(k_1-1)a_1+(a_1-1+k_2)a_2\in 2\langle a_1,a_2 \rangle.\]
In the case of $(a_1,k_1,a_2,k_2)=(\mbox{odd}, \mbox{odd}, \mbox{even}, \mbox{odd})$, $(\mbox{odd}, \mbox{even}, \mbox{odd}, \mbox{odd})$, or $(\mbox{even}, \mbox{even}, \mbox{odd}, \mbox{odd})$, 
we have 
\[2g-1+a=(a_2+k_1-1)a_1+(k_2-1)a_2\in 2\langle a_1,a_2 \rangle.\] 
Therefore, we obtain the statement of the lemma for $m=2$. 
For an integer $n\ge3$, we assume that the statement of the lemma holds for $m=n-1$. 
We prove this lemma for $m=n$. 
For $1\le i\le n-1$, let $a_i'=a_i/d_{n-1}$ and $d_i'=d_i/d_{n-1}$. 
For $1\le i\le n-1$, we have $d_i'=\mathrm{gcd}(a_1',\dots, a_i')$. 
The sequence $(a_1',\dots,a_{n-1}')$ is also a telescopic sequence. 
Let $g'$ be the genus of $\langle a_1',\dots,a_{n-1}'\rangle$. 
Note that $k_1a_1+\cdots+k_na_n$ is odd. 
Let $a'=k_1a_1'+\cdots+k_{n-1}a_{n-1}'$. 
If $(d_{n-1},a')=(\mbox{odd}, \mbox{odd})$, then we have $(a_n,k_n)=(\mbox{odd}, \mbox{even})$, $(\mbox{even}, \mbox{odd})$, or $(\mbox{even}, \mbox{even})$. 
If $(d_{n-1},a')=(\mbox{odd}, \mbox{even})$, $(\mbox{even}, \mbox{odd})$, or $(\mbox{even}, \mbox{even})$, then we have $(a_n,k_n)=(\mbox{odd}, \mbox{odd})$. 
We consider the case of $(d_{n-1},a',a_n,k_n)=(\mbox{odd}, \mbox{odd}, \mbox{odd}, \mbox{even})$, $(\mbox{odd}, \mbox{odd}, \mbox{even}, \mbox{even})$, or $(\mbox{even}, \mbox{odd}, \mbox{odd}, \mbox{odd})$. 
We have 
\begin{align*}
2g-1+a&=d_{n-1}\left\{-a_1'+\sum_{i=2}^{n-1}\left(\frac{d_{i-1}'}{d_i'}-1\right)a_i'+a'\right\}+(d_{n-1}-1+k_n)a_n\\
&=d_{n-1}(2g'-1+a')+(d_{n-1}-1+k_n)a_n.
\end{align*}
By the induction hypothesis, we have $2g'-1+a'\in2\langle a_1',\dots,a_{n-1}'\rangle$. 
Thus, we have $d_{n-1}(2g'-1+a')\in2\langle a_1,\dots,a_{n-1}\rangle$. 
Since $d_{n-1}-1+k_n$ is even, we have $2g-1+a\in 2\langle a_1,\dots,a_n\rangle$. 
We consider the case of $(d_{n-1},a',a_n,k_n)=(\mbox{odd}, \mbox{odd}, \mbox{even}, \mbox{odd})$, $(\mbox{odd}, \mbox{even}, \mbox{odd}, \mbox{odd})$, or $(\mbox{even}, \mbox{even}, \mbox{odd}, \mbox{odd})$. 
We have 
\begin{align*}
2g-1+a&=d_{n-1}\left\{-a_1'+\sum_{i=2}^{n-1}\left(\frac{d_{i-1}'}{d_i'}-1\right)a_i'+a'+a_n\right\}+(k_n-1)a_n\\
&=d_{n-1}(2g'-1+a'+a_n)+(k_n-1)a_n.
\end{align*}
Since $(a_1,\dots,a_n)$ is a telescopic sequence, we have $a_n\in\langle a_1',\dots,a_{n-1}'\rangle$. 
By the induction hypothesis, we have $2g'-1+a'+a_n\in2\langle a_1',\dots,a_{n-1}'\rangle$. 
Thus, we have $d_{n-1}(2g'-1+a'+a_n)\in2\langle a_1,\dots,a_{n-1}\rangle$. 
Since $k_n-1$ is even, we have $2g-1+a\in 2\langle a_1,\dots,a_n\rangle$. 
Therefore, we obtain the statement of the lemma for $m=n$. 
By induction, we obtain the statement of the lemma for any $m\ge2$. 
\end{proof}

\section{Telescopic curves}\label{2023.11.26.752}

In this section, we briefly review the definition of 
telescopic curves following \cite{Miu,Aya1,Aya2}.

Let $A_m=(a_1,\dots, a_m)$ be a telescopic sequence such that $a_i\ge2$ for any $i$. 
Let
\[
B(A_m)=\left\{(\ell_1,\dots, \ell_m)\in {\mathbb Z}_{\geq 0}^m\,\middle|\,0\leq \ell_i\leq \frac{d_{i-1}}{d_i}-1\,\, \text{ for }\,\,2\leq i\leq m\right\}.
\]

\begin{lem}[\cite{BertinCarbonne,KP}]\label{lem-2-1}
For any $a\in \langle A_m \rangle$, we have the unique element $(k_1,\dots,k_m)$ of $B(A_m)$ such that
\[
\sum_{i=1}^ma_ik_i=a.
\]
\end{lem}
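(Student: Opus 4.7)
My plan is to prove the lemma by induction on $m$; more precisely, since truncating the telescopic sequence to $(a_1,\dots,a_i)$ preserves the hypothesis $a_j/d_j\in\langle a_1/d_{j-1},\dots,a_{j-1}/d_{j-1}\rangle$ for $2\le j\le i$ (only the overall gcd $d_i$ changes, which plays no role in the combinatorics), I would state the analogous claim for each $i\ge 1$ and induct on $i$. The base case $i=1$ is immediate, since every element of $\langle a_1\rangle$ is uniquely of the form $a_1k_1$.

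In the inductive step from $i-1$ to $i$ I would separate uniqueness and existence. For uniqueness, given two admissible tuples $(k_j)$ and $(k_j')$ in the $i$-th box with common sum $a$, I would reduce modulo $d_{i-1}$: since $d_{i-1}\mid a_j$ for $j<i$, this yields $a_ik_i\equiv a_ik_i'\pmod{d_{i-1}}$, and after dividing by $d_i$ and invoking $\gcd(a_i/d_i,\,d_{i-1}/d_i)=1$ (a consequence of $d_i=\gcd(d_{i-1},a_i)$) I would conclude $k_i\equiv k_i'\pmod{d_{i-1}/d_i}$. The constraint $0\le k_i,k_i'<d_{i-1}/d_i$ then forces $k_i=k_i'$, after which the inductive hypothesis applied to $a-a_ik_i\in\langle a_1,\dots,a_{i-1}\rangle$ finishes uniqueness.

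For existence I would start from any non-negative representation $a=\sum_j a_j\ell_j$ and use the telescopic hypothesis, which supplies non-negative integers $c_1,\dots,c_{i-1}$ with $\sum_{j<i}a_jc_j=a_i\cdot d_{i-1}/d_i$, to drive a reduction: whenever $\ell_i\ge d_{i-1}/d_i$ I would substitute $\ell_i\mapsto\ell_i-d_{i-1}/d_i$ and $\ell_j\mapsto\ell_j+c_j$ for $j<i$. This preserves non-negativity and the total sum while strictly decreasing $\ell_i$, so after finitely many steps $0\le\ell_i<d_{i-1}/d_i$; then $a-a_i\ell_i\in\langle a_1,\dots,a_{i-1}\rangle$, and the inductive hypothesis supplies the remaining coordinates in the required ranges.

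The only place where the telescopic hypothesis is used non-trivially is in producing the relation $a_i\cdot d_{i-1}/d_i=\sum_{j<i}a_jc_j$ with $c_j\ge 0$ that drives the reduction step in existence; beyond that the argument is a standard mixed-radix uniqueness-and-carrying argument, so I do not anticipate a real obstacle.
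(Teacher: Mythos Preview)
The paper does not prove this lemma; it is quoted without proof from \cite{Miu,Aya1}. Your argument is correct and is the natural one: the uniqueness step via reduction modulo $d_{i-1}$ (using $\gcd(a_i/d_i,\,d_{i-1}/d_i)=1$ and the bound $0\le k_i<d_{i-1}/d_i$) and the existence step via the carrying procedure driven by the telescopic relation $a_i\cdot d_{i-1}/d_i=\sum_{j<i}a_jc_j$ with $c_j\ge 0$ both go through exactly as you describe, and the induction on the length of the truncated sequence is legitimate since the hypotheses $a_j/d_j\in\langle a_1/d_{j-1},\dots,a_{j-1}/d_{j-1}\rangle$ for $2\le j\le i$ are inherited and the condition $\gcd(a_1,\dots,a_m)=1$ is not used in the combinatorics.
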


By this lemma, for any $2\leq i\leq m$, we have the unique element 
$(\ell_{i,1},\dots, \ell_{i,m})\in B(A_m)$ satisfying 
\begin{equation}
\sum_{j=1}^m a_j \ell_{i,j}= a_i\frac{d_{i-1}}{d_i}.\label{homo}
\end{equation}

\begin{lem}[\cite{Aya2}]\label{defining}
For any $2\le i\le m$, we have $\ell_{i,j}=0$ for $j\geq i$. 
\end{lem}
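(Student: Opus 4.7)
The plan is to argue by induction on $i$, leveraging the uniqueness statement in Lemma~\ref{lem-2-1}: for each $i$, it suffices to exhibit some tuple in $B(A_m)$ whose last $m-i+1$ coordinates vanish and which represents $a_i d_{i-1}/d_i$; uniqueness then forces this tuple to equal $(\ell_{i,1},\dots,\ell_{i,m})$.

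For the base case $i=2$, the telescopic condition supplies $c\in\mathbb{Z}_{\ge 0}$ with $a_2 d_1/d_2 = c\,a_1$, and the tuple $(c,0,\dots,0)$ visibly lies in $B(A_m)$ (positions $2,\dots,m$ are zero while position $1$ is unconstrained). For the inductive step, assume $\ell_{k,j}=0$ whenever $j\ge k$ and $2\le k<i$. The telescopic condition provides nonnegative integers $c_1,\dots,c_{i-1}$ with $\sum_{j=1}^{i-1} c_j a_j = a_i d_{i-1}/d_i$, so the tuple $(c_1,\dots,c_{i-1},0,\dots,0)$ represents the correct value but may not lie in $B(A_m)$. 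I reduce it into $B(A_m)$ by processing coordinates $k=i-1, i-2, \dots, 2$ in this decreasing order: at stage $k$, write $c_k = q(d_{k-1}/d_k)+r$ with $0\le r<d_{k-1}/d_k$, replace $c_k$ by $r$, and add $q\,\ell_{k,j}$ to $c_j$ for every $j$. Equation~\eqref{homo} keeps the sum $\sum_j a_j c_j$ unchanged, while the inductive hypothesis $\ell_{k,j}=0$ for $j\ge k$ ensures that this operation modifies only coordinates strictly below $k$. Hence the zeros in positions $i,\dots,m$ survive, and once coordinate $k$ has been brought into $[0,d_{k-1}/d_k)$ it is not disturbed by later stages. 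The resulting tuple lies in $B(A_m)$, has zero coordinates in positions $i,\dots,m$, and represents $a_i d_{i-1}/d_i$; by the uniqueness in Lemma~\ref{lem-2-1}, it equals $(\ell_{i,1},\dots,\ell_{i,m})$, completing the induction.

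The main delicate point is the ordering of the reduction: it must run from $k=i-1$ downward so that each processed coordinate is shielded from every later stage. The vanishing $\ell_{k,j}=0$ for $j\ge k$ supplied by the inductive hypothesis is precisely what provides this shielding and keeps the top coordinates at zero throughout the procedure; termination is immediate since each stage consists of a single Euclidean division.
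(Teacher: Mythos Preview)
Your argument is correct. The induction is sound: the base case follows directly from $d_1=a_1$, and in the inductive step your downward reduction from coordinate $i-1$ to coordinate $2$ is well-defined because the inductive hypothesis $\ell_{k,j}=0$ for $j\ge k$ guarantees that stage $k$ only alters coordinates strictly below $k$, so previously processed coordinates and the trailing zeros are preserved; nonnegativity is maintained since you only ever add $q\,\ell_{k,j}\ge 0$ to the lower coordinates, and the total $\sum_j a_j c_j$ is unchanged by \eqref{homo}. The appeal to the uniqueness in Lemma~\ref{lem-2-1} then finishes the job.

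As for comparison with the paper: the paper does not actually supply a proof of this lemma but simply cites \cite{Aya2}, so there is no in-text argument to compare against. Your proof is a natural and self-contained one; the same reduction idea (rewriting an arbitrary representation into the canonical $B(A_m)$ form by successively applying the relations \eqref{homo} from the top coordinate downward) is essentially the mechanism underlying Lemma~\ref{lem-2-1} itself, so your approach is very much in the spirit of the surrounding material.
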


Consider the $m-1$ polynomials in $m$ variables $X=(X_1,\dots, X_m)$ given by
\begin{equation}
F_i(X)=X_i^{d_{i-1}/d_i}-\prod_{j=1}^{i-1} X_j^{\ell_{i,j}}-\sum \lambda_{a_id_{i-1}/d_i-\sum_{k=1}^m a_kj_k}^{(i)}X_1^{j_1}\cdots X_m^{j_m},\label{eq-2-5}
\end{equation}
where $2\leq i\leq m$, $\lambda_{a_id_{i-1}/d_i-\sum_{k=1}^m a_kj_k}^{(i)}\in\mathbb{C}$, 
and the summation is taken over 
$(j_1,\dots, j_m)\in B(A_m)$ such that 
\[
\sum_{k=1}^m a_kj_k<a_i\frac{d_{i-1}}{d_i}.
\]
Let $V^{\mathrm{aff}}$ be the common zeros of $F_2,\dots, F_m$:
\[
V^{\mathrm{aff}}=\{(X_1,\dots, X_m)\in\mathbb{C}^m\,|\,F_i(X_1,\dots, X_m)=0,\, 2\leq i\leq m\}.
\]
In \cite{Miu,Aya1}, $V^{\mathrm{aff}}$ is proved to be an affine algebraic curve.
We assume that $V^{\mathrm{aff}}$ is non-singular. Let $V$ be the compact Riemann surface corresponding to $V^{\mathrm{aff}}$. Then $V$ is obtained from $V^{\mathrm{aff}}$
by adding one point, say $\infty$ \cite{Miu,Aya1}. The genus of $V$ coincides with the genus $g$ of  $\langle A_m\rangle$, which is given by (\ref{2023.10.18.1}).  
We call $V$ the \textit{telescopic curve} 
associated with $A_m$. 
For $m=2$, for simplicity, we denote $\lambda_j^{(2)}$ by $\lambda_j$. 

\vspace{1ex}

\begin{ex}\label{2022.5.29.2}
(i) Let $n$ and $s$ be integers such that $2\le n<s$ and $\mathrm{gcd}(n,s)=1$. 
We consider the case of $m=2$ and $A_2=(n,s)$. 
If $(j_1,j_2)\in{\mathbb Z}_{\geq 0}^2$ satisfies $nj_1+sj_2<ns$, then we have $j_2<n$. Thus, we have $(j_1,j_2)\in B(A_2)$. 
The polynomial $F_2$ is given by 
\begin{equation}
F_2(X)=X_2^n-X_1^s-\sum\lambda_{ns-nj_1-sj_2}X_1^{j_1}X_2^{j_2},\label{2023.12.9.1}
\end{equation}
where the summation is taken over $(j_1,j_2)\in{\mathbb Z}_{\geq 0}^2$ such that $nj_1+sj_2<ns$. 
The telescopic curve associated with $A_2=(n,s)$ is the \textit{$(n,s)$ curve} introduced in \cite{BEL-99-R}.

\vspace{1ex}

\noindent
(ii) For $m=3$ and $A_3=(4,6,5)$, 
the polynomials $F_2$ and $F_3$ are given by
\begin{align*}
F_2(X)&=X_2^2-X_1^3-\lambda_1^{(2)}X_2X_3-\lambda_2^{(2)}X_1X_2-\lambda_3^{(2)}X_1X_3-\lambda_4^{(2)}X_1^2-\lambda_6^{(2)}X_2-\lambda_7^{(2)}X_3\\
&-\lambda_8^{(2)}X_1-\lambda_{12}^{(2)}, \\
F_3(X)&=X_3^2-X_1X_2-\lambda_1^{(3)}X_1X_3-\lambda_2^{(3)}X_1^2-\lambda_4^{(3)}X_2-\lambda_5^{(3)}X_3-\lambda_6^{(3)}X_1-\lambda_{10}^{(3)}.
\end{align*}

\vspace{1ex}

\noindent
(iii) For $m=3$ and $A_3=(4,6,7)$, 
the polynomials $F_2$ and $F_3$ are given by
\begin{align*}
F_2(X)&=X_2^2-X_1^3-\lambda_1^{(2)}X_1X_3-\lambda_2^{(2)}X_1X_2-\lambda_4^{(2)}X_1^2-\lambda_5^{(2)}X_3-\lambda_6^{(2)}X_2-\lambda_8^{(2)}X_1-\lambda_{12}^{(2)},\\
F_3(X)&=X_3^2-X_1^2X_2-\lambda_1^{(3)}X_2X_3-\lambda_2^{(3)}X_1^3-\lambda_3^{(3)}X_1X_3-\lambda_4^{(3)}X_1X_2-\lambda_6^{(3)}X_1^2-\lambda_7^{(3)}X_3\\
&-\lambda_8^{(3)}X_2-\lambda_{10}^{(3)}X_1-\lambda_{14}^{(3)}.
\end{align*}

\vspace{1ex}

\noindent
(iv) For $m=3$ and $A_3=(6,9,5)$, 
the polynomials $F_2$ and $F_3$ are given by
\begin{align*}
F_2(X)&=X_2^2-X_1^3-\lambda_1^{(2)}X_1^2X_3-\lambda_2^{(2)}X_1X_3^2-\lambda_3^{(2)}X_1X_2-\lambda_4^{(2)}X_2X_3-\lambda_6^{(2)}X_1^2\\
&-\lambda_7^{(2)}X_1X_3-\lambda_8^{(2)}X_3^2-\lambda_9^{(2)}X_2-\lambda_{12}^{(2)}X_1-\lambda_{13}^{(2)}X_3-\lambda_{18}^{(2)},\\
F_3(X)&=X_3^3-X_1X_2-\lambda_1^{(3)}X_2X_3-\lambda_3^{(3)}X_1^2-\lambda_4^{(3)}X_1X_3-\lambda_5^{(3)}X_3^2-\lambda_6^{(3)}X_2-\lambda_9^{(3)}X_1\\
&-\lambda_{10}^{(3)}X_3-\lambda_{15}^{(3)}.
\end{align*}

\vspace{1ex}

\noindent(v)\footnote{This example is given in \cite{Miu,Aya2}.} Let $a$ and $b$ be integers such that $a>b\ge2$ and $\mathrm{gcd}(a,b)=1$. 
For $A_m=(a_1,\dots ,a_m)$, where $a_i=a^{m-i}b^{i-1}$, the polynomials $F_i$, $2\le i\le m$, are given by 
\[
F_i(X)=X_i^a-X_{i-1}^b-
\sum
\lambda_{aa_i-\sum_{k=1}^m a_kj_k}^{(i)}X_1^{j_1}\cdots X_m^{j_m},
\]
where the summation is taken over $(j_1,\dots, j_m)\in B(A_m)$ such that $\sum_{k=1}^m a_kj_k<aa_i$. 
\end{ex}

Let $I$ be the ideal of $\mathbb{C}[X_1,\dots,X_m]$ generated by $F_2,\dots,F_m$. 
Then $I$ is a prime ideal (cf. \cite{Miu,Aya1}). 
The coordinate ring $\mathbb{C}[X_1,\dots,X_m]/I$ of $V$ can be identified with the set of the meromorphic functions on $V$ which are holomorphic at any point except $\infty$ (cf. \cite{Miu,Aya1}). 
For $1\le k\le m$, let $x_k$ be the image of $X_k$ for the projection $\mathbb{C}[X_1,\dots,X_m]\to\mathbb{C}[X_1,\dots,X_m]/I$.  
In \cite{Miu,Aya1}, it is proved that $x_k$ has a pole of order $a_k$ at $\infty$. 
We assign degrees for $X_k$, $x_k$, and $\lambda_j^{(i)}$ as 
\[\deg X_k=\deg x_k=a_k,\qquad \deg\lambda_j^{(i)}=j.\] 
We denote by ${\boldsymbol \l}$ the set of all $\lambda_j^{(i)}$. 
For $2\le i\le m$, the polynomial $F_i(X)$ is homogeneous of degree $a_id_{i-1}/d_i$ with respect to the coefficients ${\boldsymbol \l}$ and the variables $X_1,\dots, X_m$. 
For $(k_1,\dots, k_m)\in\mathbb{Z}_{\ge0}^m$, we have $\deg x_1^{k_1}\cdots x_m^{k_m}=a_1k_1+\cdots+a_mk_m$. 
We define an ordering for the monomials $x_1^{k_1}\cdots x_m^{k_m},\;(k_1,\dots, k_m)\in B(A_m)$, according as the degree and denote them by 
$\varphi_i$, $i\geq 1$. In particular, we have $\varphi_1=1$. 
The set $\{\varphi_i\}_{i=1}^{\infty}$ is a basis of the coordinate ring $\mathbb{C}[X_1,\dots,X_m]/I$ over $\mathbb{C}$ (cf. \cite{Miu,Aya1,Aya2}). 
Further, we have the following more precise property. 

\begin{lem}\label{2022.6.28.222}
For any $(k_1,\dots,k_m)\in\mathbb{Z}_{\ge0}^m$, the monomial $x_1^{k_1}\cdots x_m^{k_m}$ can be uniquely expressed by the linear combination of $\varphi_i$ as follows: 
\begin{equation}
x_1^{k_1}\cdots x_m^{k_m}=\varphi_n+\sum_{i=1}^{n-1}\rho_i\varphi_i,\label{2022.3.21.1}
\end{equation}
where $\deg \varphi_{n}=\sum_{i=1}^ma_ik_i$, $\rho_i\in\mathbb{Z}[{\boldsymbol \l}]$ for $1\le i\le n-1$, and the right hand side of (\ref{2022.3.21.1}) is homogeneous of degree $\sum_{i=1}^ma_ik_i$ with respect to ${\boldsymbol \l}$ and $x_1,\dots,x_m$. 
\end{lem}

\begin{proof}
The proof of this lemma is similar to that of the lemma in \cite[p.~1412]{Miu}. 
For the sake to be complete and self-contained, we give a proof of this lemma. 
For $(k_1,\dots,k_m)$, \\ $(k_1',\dots,k_m')\in\mathbb{Z}_{\ge0}^m$, we define $(k_1,\dots,k_m)<(k_1',\dots,k_m')$ if and only if 
\begin{itemize}
\item $\sum_{i=1}^ma_ik_i<\sum_{i=1}^ma_ik_i'$ or 

\item $\sum_{i=1}^ma_ik_i=\sum_{i=1}^ma_ik_i'$ and $k_1=k_1', \;k_2=k_2', \dots, k_{i-1}=k_{i-1}', \;k_i>k_i'$ for some $1\le i\le m$. 
\end{itemize}
For $(k_1,\dots,k_m)=(0,\dots,0)$, it is trivial that $x_1^{k_1}\cdots x_m^{k_m}$ can be expressed in the form of (\ref{2022.3.21.1}). 
We take $(\ell_1,\dots,\ell_m)\in\mathbb{Z}_{\ge0}^m$ such that $(\ell_1,\dots,\ell_m)\neq(0,\dots,0)$. 
We assume that $x_1^{k_1}\cdots x_m^{k_m}$ can be expressed in the form of (\ref{2022.3.21.1}) for any $(k_1,\dots,k_m)\in\mathbb{Z}_{\ge0}^m$ such that $(k_1,\dots,k_m)<(\ell_1,\dots,\ell_m)$. 
If $(\ell_1,\dots,\ell_m)\in B(A_m)$, then it is trivial that $x_1^{\ell_1}\cdots x_m^{\ell_m}$ can be expressed in the form of (\ref{2022.3.21.1}). 
We assume $(\ell_1,\dots,\ell_m)\notin B(A_m)$. 
Then there exists an integer $i$ such that $2\le i\le m$ and $\ell_i\ge d_{i-1}/d_i$. 
From (\ref{eq-2-5}), we have 
\begin{align*}
&x_1^{\ell_1}\cdots x_m^{\ell_m}\\
&=x_1^{\ell_1}\cdots x_{i-1}^{\ell_{i-1}}x_i^{\ell_i-d_{i-1}/d_i}\left(\prod_{j=1}^{i-1} x_j^{\ell_{i,j}}+
\sum \lambda_{a_id_{i-1}/d_i-\sum_{k=1}^m a_kj_k}^{(i)}x_1^{j_1}\cdots x_m^{j_m}\right)x_{i+1}^{\ell_{i+1}}\cdots x_m^{\ell_m}.
\end{align*}
The right hand side of the above equation is homogeneous of degree $\sum_{i=1}^ma_i\ell_i$ with respect to ${\boldsymbol \l}$ and $x_1,\dots,x_m$. 
For any $x_1^{k_1}\cdots x_m^{k_m}$ in the right hand side of the above equation, we have $(k_1,\dots,k_m)<(\ell_1,\dots,\ell_m)$. 
Thus, $x_1^{\ell_1}\cdots x_m^{\ell_m}$ can be expressed in the form of (\ref{2022.3.21.1}). 
By induction, $x_1^{k_1}\cdots x_m^{k_m}$ can be expressed in the form of (\ref{2022.3.21.1}) for any $(k_1,\dots,k_m)\in\mathbb{Z}_{\ge0}^m$. 
Since the set $\{\varphi_i\}_{i=1}^{\infty}$ is a basis of the coordinate ring of the telescopic curve $V$ over $\mathbb{C}$, the expression of (\ref{2022.3.21.1}) is unique. 
\end{proof}

In the proof of Lemma \ref{2022.6.28.222}, a method to express $x_1^{k_1}\cdots x_m^{k_m}$ as the linear combination of $\varphi_i$ explicitly is given. 
Let $(w_1,\dots, w_g)$ be the gap sequence at $\infty$:
\[
\{w_i\,|\,1\leq i\leq g\}=\mathbb Z_{\geq0}
\backslash\langle A_m \rangle,
\quad
w_1<\cdots<w_g. 
\] 
In particular, we have $w_1=1$. 
The numbers $a_1,\dots, a_m$ generate the semigroup of 
non-gaps at $\infty$. 
Let $G$ be the $(m-1)\times m$ matrix defined by 
\[
G=\left(\frac{\partial F_i}{\partial X_j}\right)_{2\leq i\leq m, \;1\le j\le m}
\]
and $G_k$ be the $(m-1)\times (m-1)$ matrix obtained by deleting the $k$-th column from $G$. 
Let $P=(x_1,\dots,x_m)\in V$. 
A basis of the vector space consisting of holomorphic 1-forms on $V$ is given by
\[
\omega_i=-\frac{\varphi_{g+1-i}}{\det G_1(P)}dx_1,\quad 1\le i\le g, 
\]
where $\det G_1$ is the determinant of $G_1$ (cf. \cite{Aya1}). 

\begin{lem}[\cite{NW,KP}]\label{lem-2-2}
We have $w_{g}=2g-1$. 
\end{lem}

\begin{lem}[\cite{Aya1}]\label{2023.12.23.1}
The holomorphic $1$-form $\omega_g$ has a zero of order $2g-2$ at $\infty$. 
\end{lem}

From Lemmas \ref{lem-2-2} and \ref{2023.12.23.1}, we find that the Riemann constant for the telescopic curve with the base point $\infty$ is a half-period. 

%\begin{lem}(\cite{Aya2})\label{b}
%It is possible to take a local parameter $z$ around $\infty$
%such that 
%\begin{equation}
%x_1=\frac{1}{z^{a_1}},
%\qquad
%x_i=\frac{1}{z^{a_i}}(1+O(z)),\quad 2\leq i\leq m. 
%\label{eq-3-1}
%\end{equation}
%\end{lem}

%\vspace{2ex}

%\begin{prop}(\cite{Aya2})\label{c} 
%For $1\leq i\leq g$, the expansion of $du_i$ at $\infty$ is of the form
%\[
%du_i=z^{w_i-1}(1+O(z))dz.
%\]
%\end{prop}
\section{Klein's fundamental $2$-form}

A Klein's fundamental $2$-form plays important roles in the theory of the sigma functions. 
We recall its definition. 

We consider the telescopic curve $V$ of genus $g$ associated with $A_m=(a_1, \dots, a_m)$. 
Let $K_V$ be the canonical bundle of $V$. 
For $i=1,2$, let $\pi_i : V\times V\to V$ be the projection to the $i$-th component. 
A section of $\pi_1^*K_V\otimes \pi_2^*K_V$ is called a \textit{bilinear form} on $V\times V$ (cf. \cite{Fay-1973, N1, MSDCS}). 

\begin{defn}
A meromorphic bilinear form $\omega(P,Q)$ on $V\times V$ is called a \textit{Klein's fundamental $2$-form} if 
the following conditions are satisfied. 

\vspace{1ex}

\noindent(i) $\omega(Q,P)=\omega(P,Q)$ for any $P, Q\in V$. 

\vspace{1ex}

\noindent (ii) $\omega(P,Q)$ is holomorphic at any point except $\{(R,R)\;|\;R\in V\}$. %where it has a double pole. 

\vspace{1ex}

\noindent (iii) For any $R\in V$, take a local parameter $t$ around $R$. 
Then $\omega(P,Q)$ has the following form around $(R,R)$: 
\[
\omega(P,Q)=\left(\frac{1}{(t_P-t_Q)^2}+f\left(t_P, t_Q\right)\right)dt_Pdt_Q, 
\]
where $t_P=t(P)$, $t_Q=t(Q)$, and $f\left(t_P, t_Q\right)$ is a holomorphic function of $t_P$ and $t_Q$. 
\end{defn}

%A fundamental differential of second kind exists but is not unique. 
For a Klein's fundamental $2$-form $\omega(P,Q)$ and complex numbers $\{c_{i,j}\}_{i,j=1}^g$ such that $c_{i,j}=c_{j,i}$, 
\[
\omega(P,Q)+\sum_{i,j=1}^gc_{i,j}\omega_i(P)\omega_j(Q)
\]
is also a Klein's fundamental $2$-form. 

For the telescopic curve $V$, a Klein's fundamental $2$-form is algebraically constructed in \cite{Aya1}. 
We recall its construction. 
Note that the construction inherits all steps of the classical construction in \cite{Baker} that was recently recapitulated and generalized in \cite{EEL,N1} for $(n,s)$ curves. 
We define the meromorphic bilinear form $\widehat{\omega}(P,Q)$ on $V\times V$ by
\[
\widehat{\omega}(P,Q)=d_Q\Omega(P,Q)+
\sum_{i=1}^g \omega_i(P)\eta_i(Q),
\]
where $P=(x_1,\dots,x_m)$ and $Q=(y_1,\dots,y_m)$ are points on $V$,
\[
\Omega(P,Q)=\frac{\det H(P,Q)}{(x_1-y_1)\det G_1(P)}dx_1,
\]
$H=(h_{i,j})_{2\leq i,j\leq m}$ with
\[
h_{i,j}=\frac{F_i(y_1, \dots ,y_{j-1} ,x_j, x_{j+1}, \dots, x_m)-F_i(y_1, \dots, y_{j-1}, y_j, x_{j+1}, \dots, x_m)}{x_j-y_j},
\]
and $\eta_i$ is a meromorphic 1-form on $V$ which is holomorphic at any point except $\infty$. 
Here, $d_Q\Omega(P,Q)$ means the derivative of $\Omega(P,Q)$ with respect to $Q$. 

\begin{lem}[{\cite[Lemma 4.7]{Aya1}, \cite[Lemma 6]{N1}}]
The set 
\[
\left\{\frac{\varphi_i}{\det G_1(P)}dx_1\right\}_{i=1}^{\infty}
\]
is a basis of the vector space consisting of the meromorphic $1$-forms on $V$ which are holomorphic at any point except $\infty$. 
\end{lem}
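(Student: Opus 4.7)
The plan is to recognize the forms in question as products of $\omega_g$ with the basis functions $\varphi_i$. Since $\varphi_1=1$, the definition $\omega_g=-\varphi_1/\det G_1\,dx_1=-1/\det G_1\,dx_1$ gives
\[
\psi_i:=\frac{\varphi_i(P)}{\det G_1(P)}\,dx_1=-\varphi_i\,\omega_g.
\]
Because $\omega_g$ is holomorphic on all of $X$ and each $\varphi_i$ is holomorphic on $X\setminus\{\infty\}$, every $\psi_i$ is a meromorphic one-form holomorphic away from $\infty$, so the whole family lies in the space under consideration.

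Next I would pin down the order at $\infty$. Setting $n_i=-\ord\varphi_i$, the sequence $0=n_1<n_2<\cdots$ enumerates the numerical semigroup $\langle a_1,\dots,a_m\rangle$ in increasing order. Using $\ord\omega_g=2g-2$ from Lemma \ref{lem-2-2}, one gets
\[
\ord\psi_i=2g-2-n_i,
\]
and these integers are pairwise distinct. Linear independence of $\{\psi_i\}_{i\ge 1}$ over $\mathbb{C}$ follows at once from the distinctness of orders at the single point $\infty$.

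For the spanning statement I would argue by matching, for each integer $k\ge 0$, the number of $\psi_i$ with pole order at most $k$ at $\infty$ against $\dim_\mathbb{C} H^0(X,K_X+k\cdot\infty)$. The crucial structural input is that the Weierstrass semigroup at $\infty$ is symmetric: Lemma \ref{lem-2-2} gives $w_g=2g-1$, so the Frobenius number of $\langle a_1,\dots,a_m\rangle$ equals $2g-1$, and the involution $n\mapsto 2g-1-n$ exchanges gaps and non-gaps within $\{0,1,\dots,2g-1\}$. Hence $\{2g-2-n_i:n_i\le 2g-2\}$ coincides with $\{w_j-1:1\le j\le g\}$, the set of orders realized by holomorphic one-forms at $\infty$, giving the expected $g$ values in $[0,2g-2]$. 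For the polar orders, every integer $\ge 2g$ is a non-gap, so the remaining $\psi_i$ realize every integer $\le -2$ as $\ord\psi_i$, while $-1$ never occurs (consistent with the residue theorem). Combined with Riemann--Roch, which gives $\dim H^0(X,K_X+k\cdot\infty)=g$ for $k\in\{0,1\}$ and $g+k-1$ for $k\ge 2$, the counts match in every range, and a standard leading-term subtraction then expresses any form in the ambient space as a finite linear combination of the $\psi_i$.

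The main obstacle is establishing the semigroup symmetry $n\in\langle a_1,\dots,a_m\rangle\Leftrightarrow 2g-1-n\notin\langle a_1,\dots,a_m\rangle$ for $0\le n\le 2g-1$; this is precisely what makes the holomorphic and polar dimension counts align and is exactly where Lemma \ref{lem-2-2} intervenes. Everything else reduces to routine bookkeeping with Riemann--Roch and the Weierstrass gap theorem.
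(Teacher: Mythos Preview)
The paper does not supply its own proof of this lemma; it is quoted with attribution to \cite[Lemma 4.7]{Aya1} and \cite[Lemma 6]{N1}. So there is nothing in the present manuscript to compare your argument against directly.

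That said, your argument is sound and is essentially the standard one. The identification $\psi_i=-\varphi_i\,\omega_g$ is correct (since $\omega_g=-\varphi_1/\det G_1\,dx_1$ and $\varphi_1=1$), and the crucial structural point---that the numerical semigroup $\langle a_1,\dots,a_m\rangle$ is symmetric because its Frobenius number equals $2g-1$---is exactly what Lemma~\ref{lem-2-2} delivers. The dimension count via Riemann--Roch then matches the orders $\{2g-2-n_i\}$ realized by the $\psi_i$ in every range, and the leading-term subtraction gives spanning.

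One notational caveat: in this paper the symbol $\ord$ is defined as the order of the \emph{pole} at $\infty$, not the valuation. Thus with the paper's convention $\ord\varphi_i=n_i$ (not $-n_i$), and one would write that $\omega_g$ has a zero of order $2g-2$ rather than $\ord\omega_g=2g-2$. Your formulas are written in the usual valuation convention; the mathematics is unaffected, but if this is to be inserted into the paper you should either switch to the paper's sign convention or explicitly note that you are using the order-of-vanishing convention instead.
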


Let 
\[
\sum_{i=1}^g\omega_i(P)\eta_i(Q)=\frac{\sum c_{i_1,\dots,i_m;j_1,\dots,j_m}x_1^{i_1}\cdots x_m^{i_m}y_1^{j_1}\cdots y_m^{j_m}}{\det G_1(P)\det G_1(Q)}dx_1dy_1, 
\]
where $(i_1,\dots,i_m), (j_1,\dots,j_m)\in B(A_m)$ and $c_{i_1,\dots,i_m;j_1,\dots,j_m}\in\mathbb{C}$. 

\begin{lem}[{\cite[Theorem 4.1 (i)]{Aya1}, \cite[Proposition 2 (ii)]{N1}}]\label{2022.8.6.1234} 
It is possible to take $\{\eta_i\}_{i=1}^g$ such that $\widehat{\omega}(Q,P)=\widehat{\omega}(P,Q)$, $c_{i_1,\dots,i_m;j_1,\dots,j_m}\in\mathbb{Q}[{\boldsymbol \l}]$, and 
$c_{i_1,\dots,i_m;j_1,\dots,j_m}$ is homogeneous of degree $2(2g-1)-\sum_{k=1}^ma_k(i_k+j_k)$ with respect to ${\boldsymbol \l}$ if $c_{i_1,\dots,i_m;j_1,\dots,j_m}\neq0$. 
\end{lem}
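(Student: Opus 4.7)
The plan is to obtain the $\eta_i$ by symmetrizing the naive bilinear form $d_Q\Omega(P,Q)$. First I would analyze the singularity of $\Omega(P,Q)$ along the diagonal. A direct computation using the identity defining $h_{i,j}$, namely $h_{i,j}(P,Q)=(F_i(\dots,x_j,\dots)-F_i(\dots,y_j,\dots))/(x_j-y_j)$, together with Cramer's rule applied to the derivative matrix $G_1$, shows that $\Omega(P,Q)-dx_1/(x_1-y_1)$ is holomorphic in a neighborhood of each point of the diagonal. Consequently $d_Q\Omega(P,Q)$ exhibits precisely the fundamental-second-kind pole $dt_P\,dt_Q/(t_P-t_Q)^2$ on the diagonal, and its antisymmetric combination
\[
\Psi(P,Q):=d_Q\Omega(P,Q)-d_P\Omega(Q,P)
\]
is regular along the diagonal; as a meromorphic bilinear form on $X\times X$ it has poles only above $\infty$ in either factor.

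Next I would expand $\Psi(P,Q)$, for fixed $Q$, in the basis $\{\varphi_i(P)/\det G_1(P)\cdot dx_1\}_{i\ge1}$ from the lemma quoted just before Lemma \ref{2022.8.6.1234}. The coefficients are meromorphic one-forms in $Q$ with poles only at $\infty$. Using the antisymmetry $\Psi(Q,P)=-\Psi(P,Q)$ and pole-order estimates at $\infty$ controlled by Lemma \ref{lem-2-2} (where $\omega_g$ vanishes to order $2g-2$ and $w_g=2g-1$), I would show that the expansion reduces to a finite sum in which only the indices $g+1-i$ with $1\le i\le g$ contribute on each side, giving
\[
\Psi(P,Q)=\sum_{i=1}^g\bigl[\omega_i(Q)\eta_i(P)-\omega_i(P)\eta_i(Q)\bigr]
\]
for suitable meromorphic one-forms $\eta_i$ on $X$, holomorphic away from $\infty$. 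Setting $\widehat\omega(P,Q)=d_Q\Omega(P,Q)+\sum_i\omega_i(P)\eta_i(Q)$ with this choice then yields a symmetric bilinear form.

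Finally I would establish rationality and homogeneity. Since $F_i$, $h_{i,j}$, and $\det G_1$ are polynomials over $\mathbb{Z}[\boldsymbol\lambda]$, the coefficients produced by the above linear-algebra expansion live a priori in $\mathbb{Q}(\boldsymbol\lambda)$, and one checks that no denominators in $\boldsymbol\lambda$ arise (only the $\mathbb{Q}$-factors from normalizing on a monomial basis). For homogeneity, assign $\deg x_k=\deg y_k=a_k$ and $\deg dx_k=a_k$; the assumed grading makes $F_i$ homogeneous of degree $a_id_{i-1}/d_i$, and a weight count gives $\deg\det G_1=2g-1+a_1$. Because a fundamental second-kind form must be of total weight $0$ near $(\infty,\infty)$ (since a local parameter $t$ at $\infty$ has weight $-1$), the numerator of $\sum_i\omega_i(P)\eta_i(Q)$ must consist of monomials of weight $2(2g-1)$, forcing
\[
\deg c_{i_1,\dots,i_m;j_1,\dots,j_m}=2(2g-1)-\sum_{k=1}^m a_k(i_k+j_k)
\]
whenever the coefficient is nonzero.

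The hardest step is the truncation of the antisymmetric expansion of $\Psi$ to the finite form pairing $\omega_i$ with $\eta_i$: one must control the pole orders at $\infty$ of $d_Q\Omega(P,Q)$ in both variables so that, after antisymmetrizing, only the holomorphic basis $\{\omega_i\}_{i=1}^g$ survives in one slot of the tensor expansion. This rests essentially on the gap structure at $\infty$ encoded in Lemma \ref{lem-2-2} and on the combinatorics of $B(A_m)$. The rest of the argument is bookkeeping of weights and tracking of denominators, which I expect to be routine.
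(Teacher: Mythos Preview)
The paper does not supply its own proof of this lemma; it is quoted from \cite{Aya1} and \cite{N1}. Your overall strategy---symmetrize $d_Q\Omega(P,Q)$ by analyzing the antisymmetric defect $\Psi(P,Q)=d_Q\Omega(P,Q)-d_P\Omega(Q,P)$ and absorbing it into a term of the shape $\sum_i\omega_i(P)\eta_i(Q)$---is exactly the strategy used in those references, so at the level of the plan you are on the right track, and the rationality/homogeneity bookkeeping at the end (including the weight count $\deg\det G_1=2g-1+a_1$) is fine.

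There is, however, a genuine gap at the step you yourself flag as hardest. The truncation of $\Psi$ to the shape $\sum_{i=1}^g\bigl[\omega_i(Q)\eta_i(P)-\omega_i(P)\eta_i(Q)\bigr]$ does \emph{not} follow from Lemma~\ref{lem-2-2} together with antisymmetry and the combinatorics of $B(A_m)$. Antisymmetry plus ``poles only over $\infty$'' is not enough: for any two second-kind forms $\eta,\nu$ with poles only at $\infty$, the bilinear form $\eta(P)\nu(Q)-\nu(P)\eta(Q)$ is antisymmetric with the same pole locus but is not of the required shape. What the cited proofs actually use is the fact that $d_Q\Omega(P,Q)$, viewed as a one-form in $P$ for fixed $Q\neq\infty$, is holomorphic on all of $X$ except for the double pole at $P=Q$; in particular it is holomorphic at $P=\infty$. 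This is established by a direct degree estimate on $\det H(P,Q)$ in the $x$-variables relative to $(x_1-y_1)\det G_1(P)$, and it is not a consequence of the gap-sequence statement $w_g=2g-1$. Once this holomorphicity is in hand, the principal part of $d_Q\Omega(P,Q)$ at $Q=\infty$ is forced to have holomorphic coefficients in $P$, hence lies in the span of $\omega_1(P),\dots,\omega_g(P)$, and the finite decomposition drops out. You should replace the appeal to Lemma~\ref{lem-2-2} by this holomorphicity-in-$P$ statement and prove it by the degree count.
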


%\vspace{1ex}

\begin{lem}[{\cite[Theorem 4.1 (ii)]{Aya1}, \cite[Proposition 2 (i)]{N1}}]
If we take $\{\eta_i\}_{i=1}^g$ as in Lemma \ref{2022.8.6.1234}, then $\widehat{\omega}(P,Q)$ becomes a Klein's fundamental $2$-form. 
\end{lem}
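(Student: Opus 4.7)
The plan is to verify the two remaining conditions in the definition of a fundamental differential of second kind, since symmetry $\widehat{\omega}(Q,P)=\widehat{\omega}(P,Q)$ is built into the choice of $\{\eta_i\}_{i=1}^g$ given by Lemma \ref{2022.8.6.1234}. What must still be shown is that $\widehat{\omega}(P,Q)$ is holomorphic outside the diagonal $\Delta=\{(R,R)\mid R\in X\}$ and that, in a common local parameter $t$ about a diagonal point $R$, its principal part equals $dt_P\,dt_Q/(t_P-t_Q)^2$.

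First I would analyze $\Omega(P,Q)=\det H(P,Q)\,dx_1/((x_1-y_1)\det G_1(P))$ as a meromorphic $1$-form in $P$ with $Q$ fixed generically. The key algebraic step is to show that the locus $\{x_1(P)=x_1(Q)\}$ only contributes a pole at the true diagonal $P=Q$: the entries
\[
h_{i,j}=\frac{F_i(y_1,\dots,y_{j-1},x_j,x_{j+1},\dots,x_m)-F_i(y_1,\dots,y_{j-1},y_j,x_{j+1},\dots,x_m)}{x_j-y_j}
\]
are finite differences of the defining relations, so when $P$ is a point of $X$ with the same $x_1$-coordinate as $Q$ but $P\neq Q$, one uses $F_i(Q)=0$ together with $F_i(P)=0$ to exhibit a relation forcing $\det H(P,Q)$ to vanish; similarly, zeros of $\det G_1(P)$ (ramification of the projection $x_1$) are cancelled by compensating zeros of $\det H$, as is already known for the individual holomorphic forms $\omega_i=-\varphi_{g+1-i}\,dx_1/\det G_1$.

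Next I would compute the local expansion near a point $(R,R)\notin\{\infty\}\times X\cup X\times\{\infty\}$. Writing $dx_1=(dx_1/dt)\,dt$ and expanding $x_1(t_P)-x_1(t_Q)=(t_P-t_Q)(dx_1/dt)(t_Q)+O((t_P-t_Q)^2)$, a direct expansion of $\det H(P,Q)$ along the diagonal shows
\[
\Omega(P,Q)=\frac{dt_P}{t_P-t_Q}+(\text{holomorphic in }(t_P,t_Q)),
\]
so that $d_Q\Omega(P,Q)$ contributes precisely $dt_P\,dt_Q/(t_P-t_Q)^2$ plus a holomorphic bilinear form. The added term $\sum_{i=1}^g\omega_i(P)\eta_i(Q)$ is automatically holomorphic there because each $\omega_i$ is holomorphic on all of $X$ and each $\eta_i$ is holomorphic away from $\infty$.

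Finally, the delicate point is the behavior at $\infty$. The bare $d_Q\Omega(P,Q)$ will in general have spurious singularities when $P$ or $Q$ tends to $\infty$; invoking the lemma that $\{\varphi_i(P)dx_1/\det G_1(P)\}_{i\geq 1}$ is a basis of the meromorphic $1$-forms on $X$ holomorphic off $\infty$, one expands the $\infty$-part of $d_Q\Omega(P,Q)$ in the $Q$-variable with coefficients that turn out to be holomorphic $1$-forms in $P$, hence linear combinations of the $\omega_i(P)$. This determines an admissible family of corrections $\eta_i$, and Lemma \ref{2022.8.6.1234} asserts that one can choose these corrections so as to simultaneously enforce symmetry and the rationality/homogeneity of the coefficients $c_{i_1,\dots,i_m;j_1,\dots,j_m}$. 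The main obstacle is precisely this simultaneous reconciliation: one must check that the symmetrizing freedom in choosing $\eta_i$ does not destroy the cancellation of singularities at $\infty$, which is done by observing that the discrepancy $\widehat{\omega}(P,Q)-\widehat{\omega}(Q,P)$ (before symmetrization) is a bilinear form that is holomorphic off the $\infty$-locus, and that its principal parts at $\infty$ lie in a space spanned by products $\omega_i(P)\omega_j(Q)$, so can be removed by a symmetric adjustment of the $\eta_i$'s as in Lemma \ref{2022.8.6.1234}.
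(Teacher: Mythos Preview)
The paper does not supply its own proof of this lemma: it is stated with citations to \cite[Theorem~4.1~(ii)]{Aya1} and \cite[Proposition~2~(i)]{N1}, and no argument is given in the present text. So there is no in-paper proof to compare your proposal against.

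Evaluating your sketch on its own terms: the overall architecture is the correct one and matches what is carried out in the cited references---show that $d_Q\Omega(P,Q)$ already has the required double pole along the diagonal with no other finite singularities, and then use the $\eta_i$ to kill the poles at $\infty$. Two places deserve more care. First, the assertion that $\det H(P,Q)$ vanishes whenever $x_1(P)=x_1(Q)$ but $P\neq Q$ is the genuine algebraic content here; your parenthetical ``one uses $F_i(Q)=0$ together with $F_i(P)=0$'' is the right hint, but the actual verification (that the rows of $H$ become linearly dependent over the function field in that situation) requires an explicit computation, which in \cite{Aya1} is done by telescoping the finite differences. Second, your final paragraph slightly inverts the logic of the statement: you are not free to \emph{choose} the $\eta_i$ at this stage---they have already been fixed by Lemma~\ref{2022.8.6.1234}---so what must be checked is that the particular symmetric, $\mathbb{Q}[{\boldsymbol\lambda}]$-rational choice made there still cancels the $\infty$-poles of $d_Q\Omega$. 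In the cited proofs this is handled in the opposite order: one first constructs $\eta_i$ so that $\widehat{\omega}$ is a fundamental differential (this pins down $\eta_i$ up to holomorphic forms), and \emph{then} observes that the remaining freedom---adding $\sum c_{i,j}\omega_i(P)\omega_j(Q)$---is exactly what is used to achieve symmetry without reintroducing poles. Your last sentence gestures at this but conflates the two steps.
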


\section{Sigma functions of telescopic curves}\label{2022.3.19.9}

%Let $V$ be a telescopic curve of genus $g$ 
%associated with $A_m=(a_1, \dots, a_m)$. 
We take $\{\eta_i\}_{i=1}^g$ as in Lemma \ref{2022.8.6.1234}. 
%We take a fundamental differential of second kind 
%\[\widehat{\omega}(P,Q)=d_Q\Omega(P,Q)+\sum_{i=1}^g\omega_i(P)\eta_i(Q).\]
%The set $\{\omega_i, \eta_i\}_{i=1}^g$ becomes a symplectic
%basis of the cohomology group $H^1(X,{\mathbb C})$ (cf. \cite{Aya1, N1}).
We take a canonical basis $\{\mathfrak{a}_i, \mathfrak{b}_i\}_{i=1}^g$ in the one-dimensional homology group of the telescopic curve $V$ 
and define the period matrices by
\[
2\omega'=\left(\int_{\mathfrak{a}_j}\omega_i\right),\quad
2\omega''=\left(\int_{\mathfrak{b}_j}\omega_i\right),\quad -2\eta'=\left(\int_{\mathfrak{a}_j}\eta_i\right),
\quad
-2\eta''=\left(\int_{\mathfrak{b}_j}\eta_i\right).
\]
The normalized period matrix is given by $\tau=(\omega')^{-1}\omega''$. 
Let $\delta=\tau\delta'+\delta''$ with $\delta',\delta''\in {\mathbb R}^g$ 
be the Riemann's constant with respect to $(\{\mathfrak{a}_i,\mathfrak{b}_i\}_{i=1}^g,\infty)$. 
We denote the imaginary unit by $\textbf{i}$.
The sigma function $\sigma(u)$ associated with the curve $V$, $u={}^t(u_1, \dots, u_g)$, is defined by
\begin{equation}
\sigma(u)=C\exp\left(\frac{1}{2}{}^tu\eta'(\omega')^{-1}u\right)
\theta\begin{bmatrix}\delta'\\ \delta'' \end{bmatrix}\left((2\omega')^{-1}u,\tau\right),
\label{2023.8.23.1357}
\end{equation}
where $\theta\begin{bmatrix}\delta'\\ \delta'' \end{bmatrix}(u,\tau)$ is the Riemann's theta function with the characteristics $\begin{bmatrix}\delta'\\ \delta'' \end{bmatrix}$ defined by 
\[
\theta\begin{bmatrix}\delta'\\ \delta'' \end{bmatrix}(u,\tau)=\sum_{n\in\mathbb{Z}^g}\exp\{\pi\textbf{i}\;{}^t(n+\delta')\tau(n+\delta')+2\pi\textbf{i}\;{}^t(n+\delta')(u+\delta'')\},
\]
and $C$ is a non-zero constant which is fixed in Theorem \ref{2023.8.23.1}. 
Since $\delta$ is a half-period, $\sigma(u)$ vanishes on the Abel-Jacobi image of the $(g-1)$-st symmetric product of the telescopic curve. 
We have the following proposition. 

\begin{prop}[\cite{Aya1, N1}] \label{period}
For $m_1,m_2\in\mathbb{Z}^g$ and $u\in\mathbb{C}^g$, we have 
\[
\frac{\sigma(u+2\omega'm_1+2\omega''m_2)}{\sigma(u)}
=(-1)^{2({}^t\delta'm_1-{}^t\delta''m_2)+{}^tm_1m_2}\exp\{{}^t(2\eta'm_1+2\eta''m_2)(u+\omega'm_1+\omega''m_2)\}.
\]
\end{prop}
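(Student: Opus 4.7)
The plan is to reduce the quasi-periodicity of $\sigma(u)$ to the familiar quasi-periodicity of the Riemann theta function with characteristics, and then convert the resulting correction into the form stated in the proposition by systematic use of the Legendre relations between the period matrices $\omega',\omega'',\eta',\eta''$.

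First I would substitute $v=(2\omega')^{-1}u$ in the definition of $\sigma(u)$. The shift $u\mapsto u+2\omega'm_1+2\omega''m_2$ corresponds to $v\mapsto v+m_1+\tau m_2$ since $\tau=(\omega')^{-1}\omega''$, and the standard transformation formula for the Riemann theta function with characteristic $\delta$ gives
\[
\frac{\theta[\delta](v+m_1+\tau m_2,\tau)}{\theta[\delta](v,\tau)}=\exp\{2\pi\textbf{i}\,({}^t\delta'm_1-{}^t\delta''m_2)\}\exp\{-\pi\textbf{i}\,{}^tm_2\tau m_2-2\pi\textbf{i}\,{}^tm_2 v\}.
\]
By Lemma \ref{lem-2-2}, $\delta$ is a half-period, so $2\delta',2\delta''\in\mathbb{Z}^g$; hence $\exp\{2\pi\textbf{i}\,({}^t\delta'm_1-{}^t\delta''m_2)\}=(-1)^{2({}^t\delta'm_1-{}^t\delta''m_2)}$, which accounts for the character part of the required sign.

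Next I would compute the ratio of the Gaussian prefactors. Write $\kappa=\eta'(\omega')^{-1}$ and $\Delta=2\omega'm_1+2\omega''m_2$. The Legendre-type identity ${}^t\omega'\eta'={}^t\eta'\omega'$ makes $\kappa$ symmetric, so
\[
\tfrac{1}{2}\,{}^t(u+\Delta)\kappa(u+\Delta)-\tfrac{1}{2}\,{}^tu\kappa u={}^t\Delta\kappa u+\tfrac{1}{2}\,{}^t\Delta\kappa\Delta.
\]
Applying ${}^t\omega'\eta'={}^t\eta'\omega'$ shows ${}^t(2\omega'm_1)\kappa=2\,{}^t(\eta'm_1)$, and applying the Legendre relation ${}^t\omega'\eta''-{}^t\eta'\omega''=\frac{\pi\textbf{i}}{2}I_g$ shows ${}^t(2\omega''m_2)\kappa=2\,{}^t(\eta''m_2)-2\pi\textbf{i}\,{}^tm_2(2\omega')^{-1}$. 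The stray linear piece $-2\pi\textbf{i}\,{}^tm_2(2\omega')^{-1}u$ is exactly what is needed to cancel the $-2\pi\textbf{i}\,{}^tm_2 v$ coming from the theta transformation, leaving the clean linear term ${}^t(2\eta'm_1+2\eta''m_2)u$.

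For the remaining quadratic contribution, I would expand $\tfrac{1}{2}\,{}^t\Delta\kappa\Delta-\pi\textbf{i}\,{}^tm_2\tau m_2$ and rewrite it via the same Legendre relations. A careful but routine calculation shows that, modulo $2\pi\textbf{i}\mathbb{Z}$, this equals ${}^t(2\eta'm_1+2\eta''m_2)(\omega'm_1+\omega''m_2)+\pi\textbf{i}\,{}^tm_1m_2$; the residual $\pi\textbf{i}\,{}^tm_1m_2$ produces exactly the extra sign $(-1)^{{}^tm_1m_2}$ in the proposition. Assembling the three pieces yields the claimed transformation law. The main obstacle is this last bookkeeping: one must match the cross terms ${}^tm_1(\cdots)m_2$ coming from the Gaussian with those coming from the theta quasi-periodicity, and verify that the leftover pure-imaginary contribution is precisely $\pi\textbf{i}\,{}^tm_1m_2$; all other steps are formal substitutions into the definition of $\sigma(u)$.
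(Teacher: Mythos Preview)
Your argument is the standard and correct derivation of the quasi-periodicity of $\sigma(u)$: substitute the definition of $\sigma$ in terms of $\theta[\delta]$, apply the transformation law of the theta function with characteristics under $v\mapsto v+m_1+\tau m_2$, and then use the generalized Legendre relations among $\omega',\omega'',\eta',\eta''$ to convert the Gaussian correction into the claimed exponential and sign. The bookkeeping you describe (cancellation of the $-2\pi\textbf{i}\,{}^tm_2v$ term against the stray linear piece coming from ${}^t(2\omega''m_2)\kappa$, and the identification of the residual $\pi\textbf{i}\,{}^tm_1m_2$) is exactly what one has to do, and it goes through.

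There is nothing to compare against, however: the paper does not prove Proposition~\ref{period} at all. It is stated with the citation \cite{Aya1,N1} and then used as a black box; the proofs live in those references. Your proposal is thus supplying a proof where the paper simply quotes the result. The argument you outline is essentially the one carried out in the cited sources (and, more classically, in Baker's treatise for the hyperelliptic case), so in that sense your route is the expected one.
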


A sequence of non-negative integers $\mu=(\mu_1,\mu_2,\dots,\mu_{\ell})$ such that $\mu_1\ge\mu_2\ge\cdots\ge\mu_{\ell}$ is called a {\it partition}. 
For a partition $\mu=(\mu_1,\mu_2,\dots,\mu_{\ell})$, let $|\mu|=\mu_1+\mu_2+\cdots+\mu_{\ell}$. 
For $n\ge0$, let $p_n(T)$ be the polynomial of $T_1, T_2, \dots$ defined by 
\begin{equation}
\sum_{i=0}^{\infty}\frac{1}{i!}\left(\sum_{j=1}^{\infty}T_jk^j\right)^i=\sum_{n=0}^{\infty}p_n(T)k^n,\label{2022.3.16.1}
\end{equation}
where $k$ is a variable, i.e., $p_n(T)$ is the coefficient of $k^n$ in the left hand side of (\ref{2022.3.16.1}). 
For example, we have 
\[p_0(T)=1,\;\;\;\;p_1(T)=T_1,\;\;\;\;p_2(T)=T_2+\frac{T_1^2}{2},\;\;\;\;p_3(T)=T_3+T_1T_2+\frac{T_1^3}{6}.\]
For $n<0$, let $p_n(T)=0$. 

\begin{lem}\label{2022.3.19.99}
For $n\ge1$, we have 
\[p_n(T)=\sum\frac{T_1^{k_1}\cdots T_n^{k_n}}{k_1!\cdots k_n!},\]
where the summation is taken over $(k_1,\dots,k_n)\in\mathbb{Z}_{\ge0}^n$ satisfying 
\[\sum_{j=1}^njk_j=n.\]
\end{lem}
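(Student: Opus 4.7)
The plan is to interpret the defining relation (\ref{2022.3.16.1}) as saying that the generating series $\sum_{n\ge 0}p_n(T)k^n$ is the formal exponential $\exp\!\bigl(\sum_{j\ge 1}T_jk^j\bigr)$, and then to expand this exponential as an infinite product and extract the coefficient of $k^n$.

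First I would rewrite, in the ring of formal power series in $k$ with coefficients in $\mathbb{Q}[T_1,T_2,\dots]$,
\[
\exp\!\left(\sum_{j=1}^{\infty}T_jk^j\right)=\prod_{j=1}^{\infty}\exp(T_jk^j)=\prod_{j=1}^{\infty}\sum_{k_j=0}^{\infty}\frac{T_j^{k_j}}{k_j!}k^{jk_j}.
\]
Next I would expand the product by distributivity: a general term is obtained by choosing, for each $j\ge 1$, a non-negative integer $k_j$ (with $k_j=0$ for all but finitely many $j$), which contributes $\frac{T_j^{k_j}}{k_j!}k^{jk_j}$. This gives
\[
\sum_{(k_1,k_2,\dots)}\frac{T_1^{k_1}T_2^{k_2}\cdots}{k_1!\,k_2!\cdots}\,k^{\sum_{j\ge 1}jk_j},
\]
where the sum ranges over all sequences $(k_1,k_2,\dots)\in\mathbb{Z}_{\ge 0}^{\infty}$ with finite support.

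Finally I would collect the coefficient of $k^n$. Imposing $\sum_{j\ge 1}jk_j=n$ forces $k_j=0$ for every $j>n$ (otherwise the single term $jk_j\ge j>n$ would already exceed $n$), so the surviving variables are exactly $k_1,\dots,k_n\in\mathbb{Z}_{\ge 0}$ with $\sum_{j=1}^{n}jk_j=n$. Comparing with the definition of $p_n(T)$ yields the claimed identity. The argument is essentially formal and I do not anticipate any real obstacle; the only point requiring minor care is that the rearrangement $\exp\bigl(\sum_j T_jk^j\bigr)=\prod_j\exp(T_jk^j)$ and the subsequent expansion of the product are legitimate because for each fixed $n$ only finitely many tuples $(k_1,\dots,k_n)$ contribute to the coefficient of $k^n$, so there are no convergence issues to address.
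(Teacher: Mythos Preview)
Your argument is correct and is essentially the paper's approach, which consists of the single sentence ``by comparing the coefficients of $k^n$ in (\ref{2022.3.16.1}), we obtain the statement of the lemma.'' You have simply made explicit the step of recognising the left-hand side as $\exp\!\bigl(\sum_{j\ge 1}T_jk^j\bigr)=\prod_{j\ge 1}\exp(T_jk^j)$ before extracting the coefficient; the alternative would be to apply the multinomial theorem directly to each summand $\tfrac{1}{i!}\bigl(\sum_{j}T_jk^j\bigr)^i$, which yields the same formula.
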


\begin{proof}
%By the definition, we have $p_0(T)=1$. 
%For $n\ge1$, the terms related with the definition of $p_n(T)$ in the left hand side of (\ref{2022.3.16.1}) are 
%\[\sum_{i=1}^n\frac{1}{i!}\left(\sum_{j=1}^nT_jk^j\right)^i.\]
%Therefore we obtain 
By comparing the coefficients of $k^n$ in (\ref{2022.3.16.1}), we obtain the statement of the lemma. 
\end{proof}

For an arbitrary partition $\mu=(\mu_1,\mu_2,\dots,\mu_{\ell})$, the Schur function $S_{\mu}(T)$ is defined by
\[S_{\mu}(T)=\det \left(p_{\mu_i-i+j}(T)\right)_{1\le i,j\le \ell}.\]
For the telescopic curve $V$ associated with $A_m=(a_1,\dots,a_m)$, we define the partition by 
\[
\mu(A_m)=(w_g,\dots,w_1)-(g-1,\dots,0).
\]

\vspace{1ex}

%\begin{prop}\cite{N1,Aya1}\label{odd}
%We have $\sigma(-{\bf u})=(-1)^{|\mu(A_m)|}\sigma({\bf u})$. 
%\end{prop}

\begin{lem}
The Schur function $S_{\mu(A_m)}(T)$ is a polynomial of the variables $T_{w_1},\dots, T_{w_g}$. 
\end{lem}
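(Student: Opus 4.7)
The plan is to prove that $\partial S_{\mu(A_m)}/\partial T_a \equiv 0$ for every positive $a\in\langle a_1,\dots,a_m\rangle$. By the Jacobi-Trudi formula
\[
S_{\mu(A_m)}(T) = \det\bigl(p_{\mu_i-i+j}(T)\bigr)_{1\le i,j\le g}
\]
and Lemma \ref{2022.3.19.99} applied to each entry, $S_{\mu(A_m)}(T)$ is a priori a polynomial in $T_1,\dots,T_{\mu_1+g-1}=T_1,\dots,T_{w_g}$; hence the vanishing of $\partial/\partial T_a$ for every positive non-gap $a$ leaves precisely the gap variables $T_{w_1},\dots,T_{w_g}$.

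The first preliminary is the index identity $\mu_i+g-i=w_{g+1-i}$ for $1\le i\le g$, read off directly from $\mu(A_m)=(w_g,\dots,w_1)-(g-1,\dots,0)$. Next, differentiating the generating series (\ref{2022.3.16.1}) with respect to $T_a$ yields $\partial p_n/\partial T_a = p_{n-a}$, so applying the row-wise product rule to the determinant gives
\[
\frac{\partial S_{\mu(A_m)}}{\partial T_a}(T) = \sum_{i=1}^{g} \det M^{(i)}(T),
\]
where $M^{(i)}$ is obtained from the Jacobi-Trudi matrix by replacing its $i$-th row with $(p_{\mu_i-i+j-a})_{j=1}^{g}$.

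The main step is to show $\det M^{(i)}=0$ for every $i$. Set $w:=w_{g+1-i}$. If $w<a$, then every entry of the replaced row has index at most $\mu_i-i+g-a=w-a<0$, so the row is identically zero. If $w\ge a$, I invoke the closure argument of the semigroup: $w-a$ cannot belong to $\langle a_1,\dots,a_m\rangle$ (otherwise $w=a+(w-a)$ would be a non-gap, contradicting $w\in\{w_1,\dots,w_g\}$), and $w-a\ne 0$ (otherwise $w=a$ would be a non-gap), so $w-a=w_{g+1-i'}$ for a unique $i'\ne i$. Translating through the index identity gives $\mu_i-i-a=\mu_{i'}-i'$, so the replaced $i$-th row of $M^{(i)}$ coincides with the untouched $i'$-th row, forcing $\det M^{(i)}=0$.

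The main obstacle is mostly bookkeeping: verifying the identity $\mu_i+g-i=w_{g+1-i}$ and cleanly separating the boundary case $w<a$ (whole row zero) from the generic case $w\ge a$ (duplicate rows). Once these are in place, the conceptual content is the single closure observation that the complement of $\langle a_1,\dots,a_m\rangle$ in $\mathbb{Z}_{\ge 0}$ is absorbing under addition by non-gaps, which is precisely the structural reason the gap sequence $\{w_1,\dots,w_g\}$ cuts down the variable dependence of $S_{\mu(A_m)}$.
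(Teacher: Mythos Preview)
Your proof is correct. The paper gives no argument of its own here, merely citing \cite[Section~4]{BEL-99-R} for the $(n,s)$ case; your computation---differentiating the Jacobi--Trudi determinant via $\partial p_n/\partial T_a=p_{n-a}$ and then observing that for each row the shifted index $w_{g+1-i}-a$ is either negative or again a gap (since a gap minus a positive non-gap, if non-negative, cannot lie in the semigroup), forcing a zero or duplicated row---is exactly the standard argument that reference contains.
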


\begin{proof}
We can prove this lemma as in the case of $(n,s)$ curves (cf. \cite[Section 4]{BEL-99-R}). 
\end{proof}

\vspace{1ex}

\begin{thm}[{\cite[Theorem 7]{Aya2}}]\label{2023.8.23.1}
The sigma function $\sigma(u)$ is a holomorphic function on $\mathbb{C}^g$ and we have the unique constant $C$ in (\ref{2023.8.23.1357}) such that the series expansion of $\sigma(u)$ around the origin has the following form: 
\begin{equation}
\sigma(u)=S_{\mu(A_m)}(T)|_{T_{w_i}=u_i}+\sum_{w_1n_1+\cdots+w_gn_g>|\mu(A_m)|}\varepsilon_{n_1,\dots,n_g}\frac{u_1^{n_1}\cdots u_g^{n_g}}{n_1!\cdots n_g!},\label{4.27.1}
\end{equation}
where $\varepsilon_{n_1,\dots,n_g}\in\mathbb{Q}[{\boldsymbol \l}]$ and $\varepsilon_{n_1,\dots,n_g}$ is homogeneous
of degree $w_1n_1+\cdots+w_gn_g-|\mu(A_m)|$ with respect to ${\boldsymbol \l}$ if $\varepsilon_{n_1,\dots,n_g}\neq0$.
\end{thm}

We take the constant $C$ in (\ref{2023.8.23.1357}) such that the expansion (\ref{4.27.1}) holds. 
Then the sigma function $\sigma(u)$ does not depend on the choice of a canonical basis $\{\mathfrak{a}_i,\mathfrak{b}_i\}_{i=1}^g$ in the one-dimensional homology group of the curve $V$ and is determined only 
by the coefficients ${\boldsymbol \l}$ of the defining equations of the curve $V$.

\section{Arithmetic local parameter for the telescopic curve}\label{2022.4.3.111}

Since $\mathrm{gcd}(a_1, \dots, a_m)=1$, we can take $(b_1,\dots,b_m)\in\mathbb{Z}^m$ such that 
\begin{equation}
a_1b_1+\cdots+a_mb_m=-1.\label{2023.8.24.1}
\end{equation}
Note that $(b_1,\dots,b_m)\in\mathbb{Z}^m$ satisfying (\ref{2023.8.24.1}) is not uniquely determined. 
We fix $(b_1,\dots,b_m)\in\mathbb{Z}^m$ satisfying (\ref{2023.8.24.1}). 
We consider the defining equations (\ref{eq-2-5}) of the telescopic curve $V$.  
We consider the matrix
\begin{equation}
D=\left(\begin{matrix}-\ell_{2,1} & d_1/d_2 & 0 & \cdots & 0 \\
-\ell_{3,1} & -\ell_{3,2} & d_2/d_3 & \cdots & 0  \\
\vdots & \vdots & \vdots & \ddots & \vdots\\
-\ell_{m,1} & -\ell_{m,2}  &\cdots & -\ell_{m, m-1} & d_{m-1}/d_m \\
b_1 & b_2  & \cdots & b_{m-1} & b_m
\end{matrix}\right)\in M_m(\mathbb{Z}).\label{2024.1.4.1}
\end{equation}

\begin{lem}\label{lem11}
We have $\det(D)=(-1)^m$. 
\end{lem}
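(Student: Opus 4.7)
The plan is to exploit the homogeneity relation (\ref{homo}) together with the identity $\sum_j a_j b_j = -1$ to reduce $\det(D)$ to a pure sign. Let $D'$ denote the $(m-1)\times m$ block formed by the first $m-1$ rows of $D$, and write $\mathbf{a} = {}^t(a_1,\dots,a_m)$. First I would check that $D'\mathbf{a}=0$: the $i$-th row of $D'$ dotted with $\mathbf{a}$ is
\[
-\sum_{j=1}^{i} a_j\ell_{i+1,j} + \frac{d_i}{d_{i+1}}a_{i+1},
\]
and since $\ell_{i+1,j}=0$ for $j\ge i+1$ by Lemma~\ref{defining}, this sum may be extended to $-\sum_{j=1}^{m} a_j\ell_{i+1,j} + (d_i/d_{i+1})a_{i+1}$, which vanishes by (\ref{homo}).

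Next I would compute the maximal minor of $D'$ obtained by deleting the first column: it is lower triangular with diagonal entries $d_1/d_2,\dots,d_{m-1}/d_m$, so its determinant telescopes to $d_1/d_m = a_1$ (using $d_1=a_1$ and $d_m=1$). In particular $D'$ has rank $m-1$, hence its kernel is spanned by $\mathbf{a}$. By the standard identity that $v_j := (-1)^{m+j}\det(D'_{\hat{j}})$ defines an element of $\ker D'$ (where $D'_{\hat{j}}$ is $D'$ with column $j$ removed), the case $j=1$ fixes the proportionality constant between $v$ and $\mathbf{a}$, yielding
\[
\det(D'_{\hat{j}})=(-1)^{j+1}a_j,\qquad 1\le j\le m.
\]

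Finally I would expand $\det(D)$ along the last row:
\[
\det(D) = \sum_{j=1}^{m}(-1)^{m+j} b_j \det(D'_{\hat{j}}) = (-1)^{m+1}\sum_{j=1}^{m} a_j b_j = (-1)^{m+1}\cdot(-1) = (-1)^m.
\]
The argument is essentially sign-chasing rather than a deep computation; the only delicate point is getting the $(-1)^{m+j}$ factors right in the kernel characterization of the maximal minors of $D'$, since this is exactly what distinguishes the answer $(-1)^m$ from $(-1)^{m+1}$.
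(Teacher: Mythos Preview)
Your argument is correct. Both proofs rest on the same relation $D\mathbf{a}={}^t(0,\dots,0,-1)$, but they exploit it differently. The paper performs explicit row operations (of determinant~$1$) to bring $D$ to a matrix $\widetilde{D}$ whose last row is $(e,0,\dots,0)$, reads off $e=-1/a_1$ from the last component of $\widetilde{D}\mathbf{a}$, and then computes $\det(\widetilde{D})$ directly as $(-1)^{m-1}(d_1/d_2)\cdots(d_{m-1}/d_m)\cdot e=(-1)^m$. You instead use the Cramer-type fact that the signed maximal minors of the $(m-1)\times m$ block $D'$ form a kernel vector, identify that vector with a multiple of $\mathbf{a}$ via the one explicitly computable minor $\det(D'_{\hat 1})=a_1$, and then Laplace-expand along the last row. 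Your route is a bit more conceptual and has the side benefit of yielding all the minors $\det(D'_{\hat j})=(-1)^{j+1}a_j$ at once; the paper's route is more hands-on and avoids invoking the kernel--minor correspondence. Either way the computation is short, and your caution about the sign in the kernel characterization is well placed, since that is indeed the only place one can slip.
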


\begin{proof}
From (\ref{homo}) and Lemma \ref{defining}, we have 
\begin{equation}
D\left(\begin{matrix}a_1\\ \vdots \\ a_m\end{matrix}\right)=\left(\begin{matrix}0\\ \vdots \\ 0 \\ -1\end{matrix}\right).\label{det}
\end{equation}

By multiplying some elementary matrices whose determinants are $1$ on the left, the equation (\ref{det}) becomes 
\[\widehat{D}\left(\begin{matrix}a_1\\ \vdots \\ a_m\end{matrix}\right)=\left(\begin{matrix}0\\ \vdots \\ 0 \\ -1\end{matrix}\right),\]
where 
\[
\widehat{D}=\left(\begin{matrix}
e_2 & d_1/d_2 & 0 & \cdots & 0\\
e_3 & 0 & d_2/d_3 & \cdots & 0\\
\vdots & \vdots & \vdots & \ddots & \vdots \\
e_m & 0 & 0 & \cdots & d_{m-1}/d_m \\ 
e_1 & 0 & 0 & \cdots & 0
\end{matrix}\right)
\]
with some $e_1,\dots,e_m\in\mathbb{Q}$. 
From the above equation, we obtain $e_1=-1/a_1$. 
We have 
\[\det(D)=\det(\widehat{D})=(-1)^{m-1}\frac{d_1}{d_2}\cdot\frac{d_2}{d_3}\cdots\frac{d_{m-1}}{d_m}\cdot e_1=(-1)^m.\]

\end{proof}

Let 
\begin{equation}
t=x_1^{b_1}\cdots x_m^{b_m}.\label{t}
\end{equation}
Since $t$ has a zero of order $1$ at $\infty$, we can regard $t$ as a local parameter of $V$ around $\infty$. 
We call $t$ an {\it arithmetic local parameter} as in the case of \cite{O-2018}. 
For $1\le i\le m$, we consider the expansion of $x_i$ around $\infty$ with respect to $t$ 
\begin{equation}
x_i=\frac{1}{t^{a_i}}\sum_{k=0}^{\infty}p_{i,k}t^k,\;\;\;\;\;p_{i,k}\in\mathbb{C}.\label{xi}
\end{equation}
From (\ref{eq-2-5}) and (\ref{xi}), for $2\le i\le m$, we obtain
\begin{align}
&\left(\sum_{k=0}^{\infty}p_{i,k}t^k\right)^{d_{i-1}/d_i}=\prod_{j=1}^{i-1}\left(\sum_{k=0}^{\infty}p_{j,k}t^k\right)^{\ell_{i,j}} \non \\ 
&+\sum \lambda_{a_id_{i-1}/d_i-\sum_{k=1}^m a_kj_k}^{(i)}t^{a_id_{i-1}/d_i-\sum_{k=1}^ma_kj_k}\left(\sum_{k=0}^{\infty}p_{1,k}t^k\right)^{j_1}\cdots \left(\sum_{k=0}^{\infty}p_{m,k}t^k\right)^{j_m},\label{ar}
\end{align}
where the summation is taken over 
$(j_1, \dots, j_m)\in B(A_m)$ such that 
\[
\sum_{k=1}^m a_kj_k<a_i\frac{d_{i-1}}{d_i}.
\]

\begin{prop}\label{prop11}
We have $p_{1,0}=p_{2,0}=\cdots=p_{m,0}=1$. 
\end{prop}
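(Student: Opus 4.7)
The plan is to extract a system of multiplicative relations on the leading coefficients $p_{1,0},\dots,p_{m,0}$, encode them through the integer matrix $D$ of Lemma \ref{lem11}, and then invert $D$ over $\mathbb{Z}$ to force every $p_{i,0}$ to equal $1$.

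First, I would substitute the expansions (\ref{xi}) into the definition (\ref{t}) of $t$. Because $a_1b_1+\cdots+a_mb_m=-1$, the $t^{-a_i b_i}$ factors combine to a single factor $t$, so the tautology $t=t$ reduces to the formal power-series identity $\prod_{i=1}^m\bigl(\sum_{k\ge 0}p_{i,k}t^k\bigr)^{b_i}=1$. The $b_i$-th powers are well defined as power series in $t$ since the constant term $p_{i,0}$ is nonzero ($x_i$ has a pole of exact order $a_i$ at $\infty$); evaluating at $t=0$ yields the relation $\prod_{i=1}^m p_{i,0}^{b_i}=1$.

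Next, I would evaluate the substituted defining equation (\ref{ar}) at $t=0$ for each $2\le i\le m$. The left-hand side becomes $p_{i,0}^{d_{i-1}/d_i}$, the leading product on the right becomes $\prod_{j=1}^{i-1}p_{j,0}^{\ell_{i,j}}$, and each summand in the $\boldsymbol{\lambda}$-sum carries the factor $t^{a_id_{i-1}/d_i-\sum_k a_kj_k}$ with strictly positive exponent (by the constraint $\sum_k a_k j_k<a_i d_{i-1}/d_i$) and therefore vanishes. This produces the remaining $m-1$ multiplicative relations $p_{i,0}^{d_{i-1}/d_i}=\prod_{j=1}^{i-1}p_{j,0}^{\ell_{i,j}}$.

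Finally, these $m$ relations assemble precisely into $\prod_{i=1}^m p_{i,0}^{D_{ji}}=1$ for $j=1,\dots,m$, where $D$ is the matrix of Lemma \ref{lem11}: rows $1$ through $m-1$ record the defining-equation relations, and the last row records the local-parameter relation. By Lemma \ref{lem11}, $\det D=(-1)^m$, so $D^{-1}$ has integer entries and the rows of $D$ span $\mathbb{Z}^m$ over $\mathbb{Z}$. Expressing each standard basis vector $e_k$ as an integer combination of the rows of $D$ and multiplying the corresponding relations raised to integer exponents yields $p_{k,0}=1$. The only non-trivial bookkeeping is verifying that the $\boldsymbol{\lambda}$-contributions truly vanish at $t=0$ and that the $m$ derived relations match the matrix $D$ row by row; once that is in hand, the integer-inversion step is immediate.
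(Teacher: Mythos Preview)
Your argument is correct and follows the same overall strategy as the paper: extract the $m-1$ multiplicative relations $p_{i,0}^{d_{i-1}/d_i}=\prod_{j<i}p_{j,0}^{\ell_{i,j}}$ from the constant terms of (\ref{ar}), add the relation $\prod_i p_{i,0}^{b_i}=1$ from (\ref{t}), and then invoke the unimodularity of $D$ from Lemma~\ref{lem11}. The one genuine difference is in the final inversion step. The paper passes to the principal complex logarithm, obtaining $D\cdot{}^t(\mathrm{Log}\,p_{1,0},\dots,\mathrm{Log}\,p_{m,0})\in 2\pi\mathbf{i}\,\mathbb{Z}^m$, and then applies $D^{-1}\in M_m(\mathbb{Z})$ additively to conclude $\mathrm{Log}\,p_{k,0}\in 2\pi\mathbf{i}\,\mathbb{Z}$. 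You instead stay in the multiplicative group $\mathbb{C}^\times$: since the rows of $D$ form a $\mathbb{Z}$-basis of $\mathbb{Z}^m$, writing $e_k$ as an integer combination of those rows and taking the corresponding product of the relations gives $p_{k,0}=1$ directly. Your route is slightly more elementary in that it avoids the branch-cut bookkeeping for $\mathrm{Log}$ and $\mathrm{Arg}$; the paper's route makes the linear-algebra structure more visibly additive. Both rest on the same content, namely $\det D=\pm1$.
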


\begin{proof}
By comparing the coefficients of $t^0$ in (\ref{ar}), we obtain 
\begin{equation}
p_{i,0}^{d_{i-1}/d_i}=\prod_{j=1}^{i-1}p_{j,0}^{\ell_{i,j}}\label{p1}
\end{equation}
for $2\le i\le m$. 
By substituting (\ref{xi}) into (\ref{t}), we obtain 
\[1=\left(\sum_{k=0}^{\infty}p_{1,k}t^k\right)^{b_1}\cdots \left(\sum_{k=0}^{\infty}p_{m,k}t^k\right)^{b_m}.\]
We divide the set $\{1,2,\dots,m\}$ into the two sets $\{\alpha_1,\dots,\alpha_s\}$ and $\{\alpha_{s+1},\dots,\alpha_m\}$, where 
$b_{\alpha_1},\dots,b_{\alpha_s}$ are negative integers and $b_{\alpha_{s+1}},\dots,b_{\alpha_m}$ are non-negative integers. 
Then we have 
\begin{equation}
\prod_{j=1}^s\left(\sum_{k=0}^{\infty}p_{\alpha_j,k}t^k\right)^{-b_{\alpha_j}}=\prod_{j=s+1}^m\left(\sum_{k=0}^{\infty}p_{\alpha_j,k}t^k\right)^{b_{\alpha_j}}.\label{kuro}
\end{equation}
%By substituting $t=0$ into (\ref{kuro}), we obtain 
By comparing the coefficients of $t^0$ in (\ref{kuro}), we obtain 
\begin{equation}
p_{1,0}^{b_1}\cdots p_{m,0}^{b_m}=1.\label{p2}
\end{equation}
We perform the following elementary row operations on the matrix $D$ defined in (\ref{2024.1.4.1}). 
First, for $1\le i\le m$, we multiply the $i$-th row by $a_1/d_i$. 
Next, if $m\ge3$, from $i=1$ to $i=m-2$, we perform the following $(m-i-1)$-times elementary row operations: 
\begin{itemize}

\item for $i+1\le j\le m-1$, we multiply the $i$-th row by $\ell_{j+1,i+1}d_{i+1}/d_j$ and add it to the $j$-th row. 

\end{itemize}
Finally, for $1\le i\le m-1$, we multiply the $i$-th row by $-b_{i+1}d_{i+1}$ and add it to the $m$-th row. 
Then the matrix $D$ transforms into 
\begin{equation}
E=\left(\begin{matrix}-r_2 & a_1/d_2 & 0 & \cdots & 0 \\
-r_3 & 0 & a_1/d_3 & \cdots & 0 \\
\vdots & \vdots & \vdots & \ddots & \vdots \\
-r_m & 0 &0 & \cdots &  a_1 \\
r_1 & 0 & 0& \cdots  & 0
\end{matrix}\right),\label{2024.1.6.456}
\end{equation}
where $r_2=\ell_{2,1}$, $r_3,\dots,r_m\in\mathbb{Z}_{\ge0}$, and $r_1\in\mathbb{Z}$. 
For $2\le i\le m$, we prove 
\begin{equation}
p_{i,0}^{a_1/d_i}=p_{1,0}^{r_i}.\label{2024.1.6.1}
\end{equation}
For $i=2$, from (\ref{p1}), the relation (\ref{2024.1.6.1}) holds. 
We take an integer $k$ such that $3\le k\le m$. 
For any $2\le i\le k-1$, we assume that the relation (\ref{2024.1.6.1}) holds.
We have 
\[r_k=\ell_{k,1}\frac{a_1}{d_{k-1}}+\sum_{j=2}^{k-1}r_j\ell_{k,j}\frac{d_j}{d_{k-1}}.\]
From (\ref{p1}), we have 
\begin{align*}
p_{k,0}^{a_1/d_k}&=\prod_{j=1}^{k-1}p_{j,0}^{\ell_{k,j}a_1/d_{k-1}}=p_{1,0}^{\ell_{k,1}a_1/d_{k-1}}\prod_{j=2}^{k-1}\left(p_{j,0}^{a_1/d_j}\right)^{\ell_{k,j}d_j/d_{k-1}}\\
&=p_{1,0}^{\ell_{k,1}a_1/d_{k-1}}\prod_{j=2}^{k-1}\left(p_{1,0}^{r_j}\right)^{\ell_{k,j}d_j/d_{k-1}}=p_{1,0}^{r_k}.
\end{align*}
Thus, for $i=k$, the relation (\ref{2024.1.6.1}) holds. 
By induction, the relation (\ref{2024.1.6.1}) holds for any $2\le i\le m$. 
We have 
\[r_1=a_1b_1+\sum_{i=2}^mr_ib_id_i.\]
From (\ref{2024.1.6.1}), we have 
\[\prod_{i=1}^mp_{i,0}^{a_1b_i}=p_{1,0}^{a_1b_1}\prod_{i=2}^mp_{i,0}^{a_1b_i}=p_{1,0}^{a_1b_1}\prod_{i=2}^mp_{1,0}^{r_ib_id_i}=p_{1,0}^{r_1}.\]
From (\ref{p2}), we have $p_{1,0}^{r_1}=1$. 
From Lemma \ref{lem11}, we have $\det(E)=(-1)^m\prod_{i=1}^ma_1/d_i$. 
On the other hand, from (\ref{2024.1.6.456}), we have $\det(E)=(-1)^{m-1}r_1\prod_{i=2}^ma_1/d_i$. 
Thus, we have $r_1=-1$. Therefore, we have $p_{1,0}=1$. 
Let $\zeta_{a_1}$ be a primitive $a_1$-th root of unity. 
From (\ref{2024.1.6.1}), there exist integers $k_2,\dots,k_m$ such that $1\le k_i\le a_1$ and $p_{i,0}=\zeta_{a_1}^{k_i}$ for $2\le i\le m$. 
We set $k_1=a_1$. 
From (\ref{p1}), we have 
\begin{equation}
\zeta_{a_1}^{k_id_{i-1}/d_i}=\zeta_{a_1}^{\sum_{j=1}^{i-1}k_j\ell_{i,j}}\label{2023.8.31.1}
\end{equation}
for $2\le i\le m$. 
From (\ref{p2}), we have 
\begin{equation}
\zeta_{a_1}^{k_1b_1+\cdots+k_mb_m}=1.\label{2023.8.31.2}
\end{equation}
From (\ref{2023.8.31.1}) and (\ref{2023.8.31.2}), we have 
\[D\left(\begin{matrix}k_1 \\ \vdots \\ k_m\end{matrix}\right)\in a_1\mathbb{Z}^m.\]
From Lemma \ref{lem11}, we have $D^{-1}\in M_m(\mathbb{Z})$, where $D^{-1}$ is the inverse matrix of $D$. 
Therefore, we have $k_i\in a_1\mathbb{Z}$ for any $1\le i\le m$. 
From $1\le k_i\le a_1$, we have $k_i=a_1$ for any $1\le i\le m$. 
Thus, we obtain $p_{i,0}=1$ for any $1\le i\le m$. 
\end{proof}

From (\ref{t}), we have $\deg t=-1$. 

\begin{prop}\label{prop12}
We have $p_{i,k}\in\mathbb{Z}[{\boldsymbol \l}]$ and the expansion of $x_i$ around $\infty$ with respect to $t$ is homogeneous of degree $a_i$ with respect to ${\boldsymbol \l}$ and $t$. 
\end{prop}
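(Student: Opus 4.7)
The plan is to determine $p_{i,k}$ recursively on $k$, using the system of equations obtained by comparing coefficients of $t^k$ in both (\ref{ar}) and (\ref{kuro}). The crucial observation I want to exploit is that the linear part of this system in the unknowns $p_{1,k},\ldots,p_{m,k}$ is governed by the matrix $D$ itself, whose inverse has integer entries by Lemma \ref{lem11}.

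To execute this, I would write $P_i(t)=\sum_{k\ge 0}p_{i,k}t^k=1+Q_i(t)$ with $\mathrm{ord}_0 Q_i\ge 1$ (Proposition \ref{prop11}). Extracting the coefficient of $t^k$ with $k\ge 1$ from (\ref{ar}) gives, for $2\le i\le m$,
$$\frac{d_{i-1}}{d_i}p_{i,k}-\sum_{j=1}^{i-1}\ell_{i,j}p_{j,k}=R^{(k)}_i,$$
where $R^{(k)}_i\in\mathbb{Z}[\boldsymbol{\lambda}][\{p_{j,\ell}:\ell<k\}]$. This is because the multinomial expansions of $(1+Q_i)^{d_{i-1}/d_i}$ and of $\prod_{j<i}(1+Q_j)^{\ell_{i,j}}$ have integer coefficients in which only $(d_{i-1}/d_i)p_{i,k}$ and $\sum_{j<i}\ell_{i,j}p_{j,k}$ contribute linearly at $t$-order $k$, and because each $\lambda^{(i)}_{j_1,\dots,j_m}$ is multiplied by $t^{\deg\lambda^{(i)}_{j_1,\dots,j_m}}$ with $\deg\lambda^{(i)}_{j_1,\dots,j_m}\ge 1$, so its contribution at $t^k$ depends only on $\boldsymbol{\lambda}$ and $\{p_{j,\ell}:\ell<k\}$. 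Likewise, extracting the coefficient of $t^k$ from (\ref{kuro}) (equivalently, from $\prod_j P_j^{b_j}=1$) yields
$$\sum_{j=1}^m b_j p_{j,k}=R^{(k)}_{m+1}\in\mathbb{Z}[\{p_{j,\ell}:\ell<k\}].$$
Stacking these $m$ equations gives exactly $D\,{}^t(p_{1,k},\dots,p_{m,k})={}^t(R^{(k)}_2,\dots,R^{(k)}_m,R^{(k)}_{m+1})$. By Lemma \ref{lem11}, $\det D=(-1)^m$, so $D^{-1}\in M_m(\mathbb{Z})$, and hence each $p_{i,k}$ is an integer linear combination of polynomials in $\boldsymbol{\lambda}$ and the lower-order $p_{j,\ell}$. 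An induction on $k$ starting from $p_{i,0}=1$ (Proposition \ref{prop11}) then establishes $p_{i,k}\in\mathbb{Z}[\boldsymbol{\lambda}]$.

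For homogeneity, I would assign the weights $\deg t=-1$ and $\deg x_i=a_i$ together with the prescribed degrees of $\boldsymbol{\lambda}$. Both (\ref{ar}) and (\ref{kuro}) are then weighted-homogeneous: the former because each $F_i$ is homogeneous of degree $a_id_{i-1}/d_i$, and the latter because $\sum_i a_ib_i=-1=\deg t$. Declaring $p_{j,\ell}$ to have weighted degree $\ell$ in $\boldsymbol{\lambda}$ is thus consistent with the recursion, and since the entries of $D^{-1}$ are integers (degree $0$), an induction on $k$ yields that $p_{i,k}$ is homogeneous of degree $k$ in $\boldsymbol{\lambda}$. This makes $x_i=t^{-a_i}\sum_k p_{i,k}t^k$ homogeneous of degree $-(k-a_i)+k=a_i$ with respect to $\boldsymbol{\lambda}$ and $t$.

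The main obstacle is the bookkeeping in the first step: one must carefully verify that the linear contribution of $p_{\cdot,k}$ in the coefficient of $t^k$ of (\ref{ar}) and (\ref{kuro}) assembles exactly into the rows of $D$, while all $\lambda$-dependent and nonlinear contributions are pushed into the right-hand side. The condition $\deg\lambda^{(i)}_{j_1,\dots,j_m}\ge 1$ is precisely what prevents the $\lambda$-terms from disturbing the linear part. Once this identification is in hand, both the integrality and the homogeneity reduce to a clean weighted-homogeneous induction on $k$.
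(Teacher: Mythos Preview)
Your proposal is correct and follows essentially the same route as the paper: both argue by induction on $k$, compare coefficients of $t^k$ in (\ref{ar}) and (\ref{kuro}) to obtain the linear system $D\,{}^t(p_{1,k},\dots,p_{m,k})=(\text{terms in }\mathbb{Z}[\boldsymbol{\lambda}][\{p_{j,\ell}:\ell<k\}])$, and then invoke Lemma~\ref{lem11} to invert $D$ over $\mathbb{Z}$. Your explicit remark that $\deg\lambda^{(i)}_{j_1,\dots,j_m}\ge 1$ is what confines the $\lambda$-contributions to the right-hand side makes the bookkeeping slightly more transparent than in the paper, but the argument is the same.
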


\begin{proof}
%We will prove this proposition by mathematical induction with respect to $k$. 
From Proposition \ref{prop11}, for any $1\le i\le m$, we have $p_{i,0}\in\mathbb{Z}[{\boldsymbol \l}]$ and $p_{i,0}$ is homogeneous of degree $0$ with respect to ${\boldsymbol \l}$.  
We take an integer $\ell\ge1$. 
For any $1\le i\le m$ and $0\le k\le \ell-1$, we assume that $p_{i,k}\in\mathbb{Z}[{\boldsymbol \l}]$ and $p_{i,k}$ is homogeneous of degree $k$ with respect to ${\boldsymbol \l}$ if $p_{i,k}\neq0$. 
By comparing the coefficients of $t^{\ell}$ in (\ref{ar}), there exist $J_i({\boldsymbol \l})\in\mathbb{Z}[{\boldsymbol \l}]$ for $2\le i\le m$ such that $J_i({\boldsymbol \l})$ is homogeneous of degree $\ell$ with respect to ${\boldsymbol \l}$ if $J_i({\boldsymbol \l})\neq0$ and 
\[\frac{d_{i-1}}{d_i}p_{i,\ell}-\sum_{j=1}^{i-1}\ell_{i,j}p_{j,\ell}=J_i({\boldsymbol \l}),\;\;\;\;\;\;2\le i\le m.\]
By comparing the coefficients of $t^{\ell}$ in (\ref{kuro}), there exists $J_1({\boldsymbol \l})\in\mathbb{Z}[{\boldsymbol \l}]$ such that 
$J_1({\boldsymbol \l})$ is homogeneous of degree $\ell$ with respect to ${\boldsymbol \l}$ if $J_1({\boldsymbol \l})\neq0$ and 
\[b_1p_{1,\ell}+\cdots+b_mp_{m,\ell}=J_1({\boldsymbol \l}).\]
Therefore, we obtain 
\[D\left(\begin{matrix}p_{1,\ell} \\ \vdots \\ p_{m,\ell}\end{matrix}\right)=\left(\begin{matrix} J_2({\boldsymbol \l})\\ \vdots \\ J_m({\boldsymbol \l})\\ J_1({\boldsymbol \l})\end{matrix}\right).\]
Since $D^{-1}\in M_m(\mathbb{Z})$, for any $1\le i\le m$, we have $p_{i,\ell}\in\mathbb{Z}[{\boldsymbol \l}]$ and $p_{i,\ell}$ is homogeneous of degree $\ell$ with respect to ${\boldsymbol \l}$ if $p_{i,\ell}\neq0$. 
By induction, for any $1\le i\le m$ and $k\ge0$, we have $p_{i,k}\in\mathbb{Z}[{\boldsymbol \l}]$ and $p_{i,k}$ is homogeneous of degree $k$ with respect to ${\boldsymbol \l}$ if $p_{i,k}\neq0$. 
\end{proof}

\begin{lem}[{\cite[Lemma 3.4]{Aya3}}]\label{gkp}
For $1\le k\le m$, we have
\begin{equation}
\det G_k(P)=(-1)^{k+1}a_kx_1^{\gamma_1}\cdots x_m^{\gamma_m}+\sum \beta_{i_1,\dots,i_m}x_1^{i_1}\cdots x_m^{i_m},\label{2022.6.28.1}
\end{equation}
where $(\gamma_1,\dots,\gamma_m)$ is the unique element of $B(A_m)$ such that $\sum_{j=1}^ma_j\gamma_j=2g-1+a_k$ and 
the summation in (\ref{2022.6.28.1}) is taken over $(i_1,\dots,i_m)\in B(A_m)$ such that $\sum_{j=1}^ma_ji_j<2g-1+a_k$. 
We have $\beta_{i_1,\dots,i_m}\in\mathbb{Z}[{\boldsymbol \l}]$ and the right hand side of (\ref{2022.6.28.1}) is homogeneous of degree $2g-1+a_k$ with respect to ${\boldsymbol \l}$ and $x_1,\dots,x_m$. 
\end{lem}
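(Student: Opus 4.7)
The plan is to extract the leading expansion of $\det G_k$ at $\infty$ from the cofactor relation on $G$ combined with the arithmetic-local-parameter expansions of Propositions \ref{prop11} and \ref{prop12}. Since $F_i(x_1,\dots,x_m)=0$ on $X$ for every $i\ge 2$, differentiating with respect to $t$ yields $\sum_{j=1}^m(dx_j/dt)\,\partial F_i/\partial x_j=0$ for each $i$, so the column vector $(dx_j/dt)_{j=1}^m$ lies in $\ker G$. Cofactor expansion along any repeated row of $G$ also furnishes the vector $\bigl((-1)^{j+1}\det G_j\bigr)_{j=1}^m$ as a kernel element. Since $X^{\mathrm{aff}}$ is non-singular, $G$ has rank $m-1$ everywhere on $X^{\mathrm{aff}}$, so $\ker G$ is one-dimensional and the two vectors are proportional: there exists a meromorphic function $C$ on $X$ with $dx_j/dt=C\cdot(-1)^{j+1}\det G_j$ for every $j$.

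I would then pin down $C$ by inspecting $k=1$. The entries of $G$ modulo their ${\boldsymbol \l}$-parts satisfy $\overline{G}_{i,j}=0$ for $j>i$ by Lemma \ref{defining}, and $\overline{G}_{i,i}=(d_{i-1}/d_i)\,x_i^{d_{i-1}/d_i-1}$; the ${\boldsymbol \l}$-terms have strictly smaller weighted $x$-degree and hence strictly smaller pole order at $\infty$. Among bijections $\sigma\colon\{2,\dots,m\}\to\{2,\dots,m\}$ with $\sigma(i)\le i$ only the identity qualifies (by induction starting from $\sigma(2)=2$), so the leading term of $\det G_1$ at $\infty$ is $\prod_{i=2}^m (d_{i-1}/d_i)\,x_i^{d_{i-1}/d_i-1}=a_1\prod_{i=2}^m x_i^{d_{i-1}/d_i-1}$ (using $d_1=a_1$ and $d_m=1$), which by Proposition \ref{prop11} has leading $t$-expansion $a_1\,t^{-\sum_{i=2}^m a_i(d_{i-1}/d_i-1)}$. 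Combined with $dx_j/dt\sim -a_j\,t^{-a_j-1}$, the proportionality then yields $\det G_k\sim(-1)^{k+1}a_k\,t^{-N}$ with $N=\sum_{i=2}^m a_id_{i-1}/d_i-\sum_{i=1}^m a_i+a_k$.

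To convert this expansion into the stated polynomial identity, I observe that each $G_{i,j}$ lies in $\mathbb{Z}[{\boldsymbol \l}][x_1,\dots,x_m]$ and is weighted-homogeneous of degree $a_id_{i-1}/d_i-a_j$, so $\det G_k$ is weighted-homogeneous of degree $N$ in $\mathbb{Z}[{\boldsymbol \l}][x_1,\dots,x_m]$. Applying Lemma \ref{2022.6.28.222} monomial by monomial rewrites $\det G_k$ uniquely in the form $\sum_{(i_1,\dots,i_m)\in B(A_m)}\beta_{i_1,\dots,i_m}\,x_1^{i_1}\cdots x_m^{i_m}$ with $\beta_{i_1,\dots,i_m}\in\mathbb{Z}[{\boldsymbol \l}]$, preserving the homogeneity. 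By Lemma \ref{lem-2-1} the pole orders of the basis monomials are pairwise distinct, so the leading-pole calculation of the previous paragraph forces the unique monomial of pole order $N$ to be $x_1^{\gamma_1}\cdots x_m^{\gamma_m}$ with coefficient $(-1)^{k+1}a_k$, while the remaining $\beta$'s are supported on tuples with $\sum_j a_ji_j<N$. The main obstacle I anticipate is the clean identification of the leading term of $\det G_1$: once one verifies that only the identity permutation survives modulo the ${\boldsymbol \l}$-terms, the remaining arguments are essentially forced by homogeneity and Lemma \ref{2022.6.28.222}.
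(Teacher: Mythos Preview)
The paper does not actually supply a proof of this lemma; it is quoted verbatim from \cite[Lemma~3.4]{Aya3} and used as a black box. So there is no ``paper's proof'' to compare against, and your task reduces to giving a self-contained argument using only what appears earlier in this paper.

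Your argument does that correctly. A few remarks on the pieces:
\begin{itemize}
\item The kernel argument is exactly the source of the identity $\dfrac{dx_1}{\det G_1}=(-1)^{k-1}\dfrac{dx_k}{\det G_k}$ that the paper later quotes as \eqref{2022.6.2.3} from \cite[proof of Lemma~3.2]{Aya1}; you have just unpacked it.
\item The identification of the $\boldsymbol{\lambda}$-free part of $\det G_1$ is clean: since $\overline{G}_{i,j}=0$ for $j>i$ by Lemma~\ref{defining}, only the identity permutation survives, giving $\prod_{i=2}^m(d_{i-1}/d_i)\,x_i^{d_{i-1}/d_i-1}=a_1\prod_{i\ge2}x_i^{d_{i-1}/d_i-1}$. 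Note that this monomial already lies in $B(A_m)$, so for $k=1$ you in fact get \eqref{2022.6.28.1} directly without invoking Lemma~\ref{2022.6.28.222}.
\item For general $k$ your route via the proportionality constant $C$ (which you can read off as $C\sim -t^{2g-2}$ from the $k=1$ case and the genus formula) is efficient; the alternative of analysing the permutations contributing to $\det G_k$ directly is messier because the column set is $\{1,\dots,m\}\setminus\{k\}$.
\item Finally, reducing $\det G_k$ to $B(A_m)$-monomials via Lemma~\ref{2022.6.28.222} and reading the leading coefficient from the $t$-expansion (Propositions~\ref{prop11},~\ref{prop12}) is exactly right; uniqueness of the top monomial is guaranteed by Lemma~\ref{lem-2-1}. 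There is no circularity: all of these inputs are established in the paper before Lemma~\ref{gkp}.
\end{itemize}
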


In the same way as \cite[Proposition 1]{Aya2}, for $1\le i\le g$, we can prove that the expansion of $\omega_i$ around $\infty$ with respect to $t$ has the following form: 
\begin{equation}
\omega_i=t^{w_i-1}\left(1+\sum_{j=1}^{\infty}b_{i,j}t^j\right)dt,\;\;\;\;b_{i,j}\in\mathbb{C}.\label{2023.10.1}
\end{equation}

\begin{prop}\label{2021.8.8.1}
We have $b_{i,j}\in\mathbb{Z}[{\boldsymbol \l}]$ and the expansion of $\omega_i$ around $\infty$ with respect to $t$ is homogeneous of degree $1-w_i$ with respect to ${\boldsymbol \l}$ and $t$. 
\end{prop}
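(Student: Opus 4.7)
The plan is to first reduce to the special case $i = g$ via the identity $\omega_i = \varphi_{g+1-i}\cdot\omega_g$, and then establish the $i = g$ case by a computation that treats all coordinates symmetrically. The identity is immediate from $\omega_i = -\varphi_{g+1-i}\,dx_1/\det G_1$ and $\omega_g = -dx_1/\det G_1$ (using $\varphi_1 = 1$). By Proposition~\ref{prop12} and the proof of Lemma~\ref{2022.6.28.222}, the $t$-expansion of the monomial $\varphi_{g+1-i}$ has the form $t^{-n_i}(1 + \sum_{j\ge 1}c_j t^j)$ with $c_j \in \mathbb{Z}[{\boldsymbol \l}]$ homogeneous of degree $j$, where $n_i = 2g - 1 - w_i$. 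Consequently, once the case $i = g$ is settled the general case follows by multiplying the two Laurent series.

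For $i = g$, I would differentiate each defining equation $F_i(x(t)) = 0$ with respect to $t$ to obtain the linear system $\sum_j (\partial F_i/\partial x_j)\dot x_j = 0$ for $2\le i\le m$. Cramer's rule, applied to the $(m-1)\times m$ Jacobian $G$, yields a Laurent series $c(t)$ with
\[
\dot x_k(t) = c(t)(-1)^{k-1}\det G_k(x(t)),\qquad 1\le k\le m.
\]
The case $k = 1$ gives $\omega_g = -dx_1/\det G_1 = -c(t)\,dt$. Independently, the logarithmic derivative of $t = x_1^{b_1}\cdots x_m^{b_m}$ yields $1/t = \sum_k b_k\dot x_k/x_k$, and substituting the Cramer expression rearranges to
\[
c(t) = \frac{1}{t\displaystyle\sum_{k=1}^m b_k(-1)^{k-1}\det G_k(x(t))/x_k(t)}.
\]

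Next I would compute the $t$-expansion of the denominator using Lemma~\ref{gkp} and Proposition~\ref{prop12}. The key algebraic fact is that the formal inverse of any element of $1 + t\mathbb{Z}[{\boldsymbol \l}][[t]]$ lies again in $1 + t\mathbb{Z}[{\boldsymbol \l}][[t]]$, so both $1/x_k$ and eventually $1/(\text{denominator})$ will have coefficients in $\mathbb{Z}[{\boldsymbol \l}]$ once the series being inverted begins with a unit. Applying this to $\det G_k(x(t))/x_k(t)$ gives
\[
\det G_k(x(t))/x_k(t) = (-1)^{k+1}a_k\,t^{-(2g-1)} + \bigl(\text{higher-order terms with coefficients in }\mathbb{Z}[{\boldsymbol \l}]\bigr),
\]
homogeneous of degree $2g-1$ in ${\boldsymbol \l}$ and $t$. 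Weighting by $b_k(-1)^{k-1}$ and summing, the leading $t^{-(2g-1)}$-coefficient becomes $\sum_k b_ka_k$, which equals $-1$ by the defining property of the arithmetic local parameter. Thus the denominator is $-t^{-(2g-2)}(1 + tR(t))$ with $R(t)\in\mathbb{Z}[{\boldsymbol \l}][[t]]$, and inverting yields $c(t) = -t^{2g-2}(1 + tR'(t))$ with $R'(t)\in\mathbb{Z}[{\boldsymbol \l}][[t]]$. This gives $\omega_g = t^{2g-2}(1 + tR'(t))\,dt$ with the required integrality and homogeneity, and multiplying by the $t$-expansion of $\varphi_{g+1-i}$ completes the proof.

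The main obstacle is precisely the factor $1/a_1$ that would appear if one tried to compute $\omega_g = -dx_1/\det G_1$ directly from the $t$-expansions of $dx_1$ and $\det G_1$, whose leading $t$-coefficients are $-a_1$ and $a_1$; this ratio is integral for the leading term, but its recursive propagation demands division by $a_1$ in $\mathbb{Z}[{\boldsymbol \l}]$, which typically fails. The symmetric formula above is the essential trick: summing over all $k$ with the arithmetic weights $b_k(-1)^{k-1}$ replaces the awkward leading factor $a_1$ by $\sum_k a_kb_k = -1$, a unit, thanks to the defining property of the arithmetic local parameter $t = x_1^{b_1}\cdots x_m^{b_m}$. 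This is the same mechanism, rooted in the integrality of $D^{-1}$ from Lemma~\ref{lem11}, that powered Proposition~\ref{prop11} and Proposition~\ref{prop12}.
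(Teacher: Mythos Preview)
Your argument is correct, and the overall architecture matches the paper's: reduce to $i=g$ via $\omega_i=\varphi_{g+1-i}\,\omega_g$, then control the $t$-expansion of $\omega_g=-dx_1/\det G_1$ by exploiting the cofactor identities among the $\det G_k$ together with $\gcd(a_1,\dots,a_m)=1$.

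The packaging of the last step differs. The paper observes, via the same Cramer relation you use, that $dx_1/\det G_1=(-1)^{k-1}\,dx_k/\det G_k$ for every $k$; expanding the $k$-th expression gives coefficients $b_j^{(k)}\in\mathbb{Z}[1/a_k,{\boldsymbol\lambda}]$, and since these coincide for all $k$ one concludes $b_j^{(1)}\in\bigcap_k\mathbb{Z}[1/a_k,{\boldsymbol\lambda}]=\mathbb{Z}[{\boldsymbol\lambda}]$. You instead take the $b_k$-weighted combination $\sum_k b_k(-1)^{k-1}\det G_k/x_k$ from the outset, so that the leading coefficient is $\sum_k a_kb_k=-1$, a unit, and no denominators ever appear. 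Your route is a bit more direct and makes the role of the B\'ezout relation explicit (it is the same mechanism as $D^{-1}\in M_m(\mathbb{Z})$ in Propositions~\ref{prop11}--\ref{prop12}); the paper's route is slightly simpler computationally, since it only needs the expansions of $dx_k$ and $1/\det G_k$ rather than the extra factor $1/x_k$, and the intersection argument is a clean one-line endgame. Both are valid realizations of the same idea.
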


\begin{proof}
From Propositions \ref{prop11}, \ref{prop12}, and Lemma \ref{gkp}, for $1\le k\le m$, we have the following expansion: 
\[\frac{dx_k}{\det G_k(P)}=(-1)^kt^{2g-2}\left(1+\sum_{j=1}^{\infty}b_j^{(k)}t^j\right)dt,\]
where $b_j^{(k)}\in\mathbb{Z}[1/a_k,{\boldsymbol \l}]$ and $b_j^{(k)}$ is homogeneous of degree $j$ with respect to ${\boldsymbol \l}$ if $b_j^{(k)}\neq0$. 
For any $1\le k\le m$, we have 
\begin{equation}
\frac{dx_1}{\det G_1(P)}=(-1)^{k-1}\frac{dx_k}{\det G_k(P)}.\label{2022.6.2.3}
\end{equation}
(cf. the proof of \cite[Lemma 3.2]{Aya1}). 
Therefore, we have $b_j^{(1)}=b_j^{(k)}$ for any $j\ge1$ and $2\le k\le m$. 
%Thus we have $b_j^{(1)}\in\mathbb{Z}[1/a_k,{\boldsymbol \l}]$ for any $1\le k\le m$. 
Since $\mathrm{gcd}(a_1, \dots, a_m)=1$, for any $j\ge1$, we have 
\[b_j^{(1)}\in\bigcap_{k=1}^m\mathbb{Z}[1/a_k,{\boldsymbol \l}]=\mathbb{Z}[{\boldsymbol \l}].\]
From Propositions \ref{prop11} and \ref{prop12}, we obtain the statement of the proposition. 
\end{proof}

We take $\{\eta_i\}_{i=1}^g$ as in Lemma \ref{2022.8.6.1234}. 

\begin{prop}\label{ec}
It is possible to take $\{\eta_i\}_{i=1}^g$ such that $c_{i_1,\dots,i_m;j_1,\dots,j_m}=0$ if 
$\sum_{k=1}^ma_ki_k\ge\sum_{k=1}^ma_kj_k$. 
\end{prop}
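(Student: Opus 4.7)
The plan is to exploit the remaining freedom in choosing $\{\eta_i\}_{i=1}^g$. Even after imposing the symmetry of $\widehat{\omega}$ from Lemma \ref{2022.8.6.1234}, one may still replace $\eta_i$ by $\eta_i + \sum_{j=1}^g \nu_{i,j}\omega_j$ for any symmetric matrix $(\nu_{i,j})$: such a shift adds the symmetric holomorphic bilinear form $\sum_{i,j}\nu_{i,j}\omega_i(P)\omega_j(Q)$ to $\widehat{\omega}$, preserving both its second-kind property and its symmetry. The aim is to use this freedom to zero out the ``low pole-order'' part of each $\eta_i$.

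To make this precise, write $\eta_i=\psi_i/\det G_1\,dx_1$, expand $\psi_i=\sum_{k\ge 1}\alpha_{i,k}\varphi_k$ in the basis of meromorphic functions holomorphic off $\infty$, and recall that $\omega_i=-\varphi_{g+1-i}/\det G_1\,dx_1$. Then the numerator of $\sum_i\omega_i(P)\eta_i(Q)$ equals $-\sum_i\varphi_{g+1-i}(P)\psi_i(Q)$, so $c_{\vec i;\vec j}$ is nonzero only when $x^{\vec i}=\varphi_{g+1-i_0}$ for some $i_0$, in which case $c_{\vec i;\vec j}=-\alpha_{i_0,k}$ whenever $y^{\vec j}=\varphi_k$. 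Since the pole orders $\mathrm{ord}_{\infty}(\varphi_k)$ are strictly increasing in $k$, the inequality $\sum_\ell a_\ell i_\ell\ge\sum_\ell a_\ell j_\ell$ translates into $k\le g+1-i_0$. Thus the proposition reduces to choosing $\{\eta_i\}$ so that $\alpha_{i,k}=0$ for all $i$ and all $1\le k\le g+1-i$.

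Under the replacement $\eta_i\mapsto \eta_i+\sum_j\nu_{i,j}\omega_j$, the coefficient $\alpha_{i,k}$ shifts by $-\nu_{i,g+1-k}$ for $1\le k\le g$ and is unchanged for $k>g$; hence the desired vanishing conditions become $\nu_{i,j}=\alpha_{i,g+1-j}$ for all pairs with $j\ge i$. This system is consistent with the symmetry constraint $\nu_{i,j}=\nu_{j,i}$: for $i<j$ only the $i$-indexed equation applies (the $j$-indexed one would require $i\ge j$), so one simply defines $\nu_{j,i}:=\nu_{i,j}=\alpha_{i,g+1-j}$; for $i=j$ the unique equation is $\nu_{i,i}=\alpha_{i,g+1-i}$. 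A symmetric matrix $(\nu_{i,j})$ realizing the prescription therefore exists. Moreover, since the original $\alpha_{i,k}$ lie in $\mathbb{Q}[\boldsymbol{\lambda}]$ by Lemma \ref{2022.8.6.1234}, so do the $\nu_{i,j}$; and a short degree check shows that $\nu_{i,j}=\alpha_{i,g+1-j}$ is homogeneous of exactly the degree $w_i+w_j$ needed to preserve the homogeneity assertion of Lemma \ref{2022.8.6.1234} for the adjusted $c_{\vec i;\vec j}$'s.

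The entire argument is essentially linear algebra, and the principal difficulty is purely bookkeeping between the two index conventions (the index $i\in\{1,\dots,g\}$ labelling $\omega_i,\eta_i$ and the index $k$ labelling the basis $\varphi_k$), which the correspondence $\vec i\leftrightarrow(i_0,k)$ above is set up to handle.
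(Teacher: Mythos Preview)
Your argument is correct and follows essentially the same route as the paper's proof: both exploit the freedom to add a symmetric holomorphic bilinear form $\sum_{i,j}\nu_{i,j}\omega_i(P)\omega_j(Q)$ to $\widehat{\omega}$ in order to cancel the offending coefficients $c_{\vec{\imath};\vec{\jmath}}$ with $\sum a_k i_k\ge\sum a_k j_k$. Your version is simply more explicit, computing the required matrix $(\nu_{i,j})$ in one step via the expansion $\psi_i=\sum_k\alpha_{i,k}\varphi_k$ and checking that the rationality and homogeneity conditions of Lemma~\ref{2022.8.6.1234} survive the modification, whereas the paper just subtracts the terms $-c_{\vec{\imath};\vec{\jmath}}(x^{\vec{\imath}}y^{\vec{\jmath}}+x^{\vec{\jmath}}y^{\vec{\imath}})$ (or $-c_{\vec{\imath};\vec{\imath}}x^{\vec{\imath}}y^{\vec{\imath}}$ on the diagonal) directly.
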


\begin{proof}
If $\sum_{k=1}^ma_ki_k>\sum_{k=1}^ma_kj_k$ and $c_{i_1,\dots,i_m;j_1,\dots,j_m}\neq0$, we add 
\[-\frac{c_{i_1,\dots,i_m;j_1,\dots,j_m}x_1^{i_1}\cdots x_m^{i_m}y_1^{j_1}\cdots y_m^{j_m}}{\det G_1(P)\det G_1(Q)}dx_1dy_1-\frac{c_{i_1,\dots,i_m;j_1,\dots,j_m}x_1^{j_1}\cdots x_m^{j_m}y_1^{i_1}\cdots y_m^{i_m}}{\det G_1(P)\det G_1(Q)}dx_1dy_1\]
to $\widehat{\omega}(P,Q)$. 
If $\sum_{k=1}^ma_ki_k=\sum_{k=1}^ma_kj_k$, which is equivalent to $(i_1,\dots,i_m)=(j_1,\dots,j_m)$, and $c_{i_1,\dots,i_m;j_1,\dots,j_m}\neq0$, we add 
\[-\frac{c_{i_1,\dots,i_m;i_1,\dots,i_m}x_1^{i_1}\cdots x_m^{i_m}y_1^{i_1}\cdots y_m^{i_m}}{\det G_1(P)\det G_1(Q)}dx_1dy_1\]
to $\widehat{\omega}(P,Q)$. 
Then we can take $\{\eta_i\}_{i=1}^g$ in the form as claimed. 
\end{proof}

Hereafter, we take $\{\eta_i\}_{i=1}^g$ as in Proposition \ref{ec}.  

\begin{lem}[{\cite[p.~470]{Aya1}, \cite[p.~6]{Aya3}}]\label{dqo}
We have 
\begin{align*}
&d_Q\Omega(P,Q) \\
&=\frac{\{\sum_{i=1}^m(-1)^{i+1}(x_1-y_1)\frac{\partial \det H}{\partial y_i}(P,Q)\det G_i(Q)\}+\det G_1(Q)\det H(P,Q)}{(x_1-y_1)^2\det G_1(P)\det G_1(Q)}dx_1dy_1,
\end{align*}
where the numerator is homogeneous of degree $2\sum_{i=2}^m(d_{i-1}/d_i-1)a_i=2(2g-1+a_1)$ 
with respect to ${\boldsymbol \l}$ and $x_1,\dots,x_m, y_1,\dots,y_m$.  
\end{lem}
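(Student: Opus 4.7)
The strategy is to compute $d_Q\Omega(P,Q)$ directly on the curve $X$ and then bookkeep degrees. Starting from
\[
\Omega(P,Q)=\frac{\det H(P,Q)}{(x_1-y_1)\det G_1(P)}\,dx_1,
\]
I would first differentiate with respect to $Q$ treating $y_1,\dots,y_m$ as independent ambient variables. Since $\det G_1(P)$ does not involve $Q$ and $\partial(x_1-y_1)/\partial y_i=-\delta_{i,1}$, the quotient rule gives
\[
d_Q\!\left[\frac{\det H(P,Q)}{x_1-y_1}\right]=\frac{1}{(x_1-y_1)^2}\sum_{i=1}^{m}\Bigl((x_1-y_1)\frac{\partial \det H}{\partial y_i}(P,Q)+\delta_{i,1}\det H(P,Q)\Bigr)dy_i,
\]
where the Kronecker delta produces precisely the $\det G_1(Q)\det H(P,Q)$ term after the next step.

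Second, the differentials $dy_i$ are not independent on $X$: because $F_j(Q)=0$ for $2\le j\le m$, differentiating yields the linear system $\sum_{k=1}^{m}\frac{\partial F_j}{\partial y_k}(Q)\,dy_k=0$, i.e. $G(Q)\,{}^t(dy_1,\dots,dy_m)=0$. This is an $(m-1)\times m$ matrix equation of generic rank $m-1$ (smoothness of $X$), whose kernel is spanned by the vector with $i$-th entry $(-1)^{i+1}\det G_i(Q)$ — this follows from the Laplace expansion of the degenerate $m\times m$ determinant obtained by repeating any row of $G(Q)$. Consequently
\[
dy_i=(-1)^{i+1}\,\frac{\det G_i(Q)}{\det G_1(Q)}\,dy_1,\qquad 1\le i\le m.
\]
Substituting into the previous display and multiplying numerator and denominator by $\det G_1(Q)$ gives the formula asserted in the lemma, after including the outer factor $1/\det G_1(P)$ and the differential $dx_1$.

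For the homogeneity claim I would track degrees block by block. Since $\deg F_i=a_id_{i-1}/d_i$, direct inspection of the finite difference quotient yields $\deg h_{i,j}=a_id_{i-1}/d_i-a_j$, hence expanding the determinant gives $\deg \det H(P,Q)=\sum_{i=2}^{m}a_i(d_{i-1}/d_i-1)=2g-1+a_1$ by the genus formula recalled in the excerpt. Combined with Lemma \ref{gkp}, which gives $\deg\det G_k=2g-1+a_k$, a short arithmetic check confirms that both $(x_1-y_1)\frac{\partial \det H}{\partial y_i}(P,Q)\det G_i(Q)$ and $\det G_1(Q)\det H(P,Q)$ are homogeneous of the same degree
\[
a_1+(2g-1+a_1-a_i)+(2g-1+a_i)\;=\;2(2g-1+a_1)
\]
in $\boldsymbol{\lambda}$ and $x_1,\dots,x_m,y_1,\dots,y_m$ jointly, as required.

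The main obstacle is really only bookkeeping: verifying the correct signs in the Cramer-rule vector that spans $\ker G(Q)$, and aligning the degree counts so that every summand in the numerator ends up with the same total degree. There is no deeper analytical difficulty, and the argument parallels the $(n,s)$-curve case treated in \cite{N1}.
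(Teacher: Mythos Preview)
Your argument is correct. The paper itself does not supply a proof of this lemma; it is quoted from \cite[p.~470]{Aya1} and \cite[p.~6]{Aya3}. Your computation is precisely the one underlying those references: the relation $dy_i=(-1)^{i+1}\dfrac{\det G_i(Q)}{\det G_1(Q)}\,dy_1$ that you derive via Cramer's rule on $G(Q)\,{}^t(dy_1,\dots,dy_m)=0$ is exactly the identity \eqref{2022.6.2.3} recorded later in the paper (and attributed there to the proof of \cite[Lemma~3.2]{Aya1}), and your degree bookkeeping for $\det H$ and $\det G_k$ agrees with Lemma~\ref{gkp} and the genus formula. So your approach coincides with the paper's intended one.
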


We define $\overline{c}_{i_1,\dots,i_m;j_1,\dots,j_m}$ by 
\[d_Q\Omega(P,Q)=\frac{\sum\overline{c}_{i_1,\dots,i_m;j_1,\dots,j_m}x_1^{i_1}\cdots x_m^{i_m}y_1^{j_1}\cdots y_m^{j_m}}{(x_1-y_1)^2\det G_1(P)\det G_1(Q)}dx_1dy_1,\]
where the summation is taken over $(i_1,\dots,i_m), (j_1,\dots,j_m)\in B(A_m)$. 

\begin{lem}\label{2021.8.6.1}
We have $\overline{c}_{i_1,\dots,i_m;j_1,\dots,j_m}\in\mathbb{Z}[{\boldsymbol \l}]$ 
and $\overline{c}_{i_1,\dots,i_m;j_1,\dots,j_m}$ is homogeneous of degree $2(2g-1+a_1)-\sum_{k=1}^ma_k(i_k+j_k)$ with respect to ${\boldsymbol \l}$ if $\overline{c}_{i_1,\dots,i_m;j_1,\dots,j_m}\neq0$. 
\end{lem}

\begin{proof}
From Lemmas \ref{2022.6.28.222} and \ref{dqo}, we obtain the statement of the lemma.
\end{proof}

%From Lemma \ref{ec}, we have $c_{i_1,\dots,i_m;j_1,\dots,j_m}=0$ if $a_1i_1+\cdots+a_mi_m\ge a_1j_1+\cdots+a_mj_m$. 

We define $F(P,Q)$ by 
\[\widehat{\omega}(P,Q)=\frac{F(P,Q)}{(x_1-y_1)^2\det G_1(P)\det G_1(Q)}dx_1dy_1.\]

We can determine the coefficients $c_{i_1,\dots,i_m;j_1,\dots,j_m}$ by the following recurrence relations. 

\begin{prop}\label{i012}
We take $(i_1,\dots,i_m), (j_1,\dots,j_m)\in B(A_m)$ such that $\sum_{k=1}^ma_ki_k<\sum_{k=1}^ma_kj_k$. 

\vspace{1ex}

(i) If $i_1=0$, we have 
\[c_{0,i_2,\dots,i_m;j_1,\dots,j_m}=\overline{c}_{j_1+2,j_2,\dots,j_m;0,i_2,\dots,i_m}-\overline{c}_{0,i_2,\dots,i_m;j_1+2,j_2,\dots,j_m}.\]

\vspace{1ex}

(ii) If $i_1=1$, we have 
\begin{align*}
c_{1,i_2,\dots,i_m;j_1,\dots,j_m}&=2\overline{c}_{j_1+3,j_2,\dots,j_m;0,i_2,\dots,i_m}-2\overline{c}_{0,i_2,\dots,i_m;j_1+3,j_2,\dots,j_m}+\overline{c}_{j_1+2,j_2,\dots,j_m;1,i_2,\dots,i_m} \\ 
&-\overline{c}_{1,i_2,\dots,i_m;j_1+2,j_2,\dots,j_m}. 
\end{align*}

\vspace{1ex}

(iii) If $i_1\ge2$, we have 
\begin{align*}
c_{i_1,\dots,i_m;j_1,\dots,j_m}&=2c_{i_1-1,i_2,\dots,i_m;j_1+1,j_2,\dots,j_m}-c_{i_1-2,i_2,\dots,i_m;j_1+2,j_2,\dots,j_m} \\
&+\overline{c}_{j_1+2,j_2,\dots,j_m;i_1,\dots,i_m}-\overline{c}_{i_1,\dots,i_m;j_1+2,j_2,\dots,j_m}.  
\end{align*}

\end{prop}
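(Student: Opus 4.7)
The plan is to exploit the symmetry of $\widehat{\omega}(P,Q)$ guaranteed by Lemma \ref{2022.8.6.1234} and further normalized by Proposition \ref{ec}. Writing
\[F(P,Q)=\sum\tilde{c}_{\alpha;\beta}x^{\alpha}y^{\beta}+(x_1-y_1)^2\sum c_{\alpha;\beta}x^{\alpha}y^{\beta},\]
with $\alpha=(i_1,\dots,i_m)$ and $\beta=(j_1,\dots,j_m)$ ranging over $B(A_m)$, the identity $\widehat{\omega}(P,Q)=\widehat{\omega}(Q,P)$ reduces, after clearing the common denominator $(x_1-y_1)^2\det G_1(P)\det G_1(Q)$, to $F(P,Q)=F(Q,P)$. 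Subtracting and relabeling $\alpha\leftrightarrow\beta$ in the swap yields
\[(x_1-y_1)^2\sum r_{\alpha;\beta}x^{\alpha}y^{\beta}=\sum s_{\alpha;\beta}x^{\alpha}y^{\beta},\]
with $r_{\alpha;\beta}:=c_{\alpha;\beta}-c_{\beta;\alpha}$ and $s_{\alpha;\beta}:=\tilde{c}_{\beta;\alpha}-\tilde{c}_{\alpha;\beta}$.

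The next step is to expand $(x_1-y_1)^2=x_1^2-2x_1y_1+y_1^2$ and compare the coefficient of $x_1^{i_1}x_2^{i_2}\cdots x_m^{i_m}y_1^{j_1+2}y_2^{j_2}\cdots y_m^{j_m}$ on the two sides. Writing $\bar\alpha=(i_2,\dots,i_m)$ and $\bar\beta=(j_2,\dots,j_m)$, this gives the three-term recurrence
\[r_{(i_1-2,\bar\alpha);(j_1+2,\bar\beta)}-2r_{(i_1-1,\bar\alpha);(j_1+1,\bar\beta)}+r_{(i_1,\bar\alpha);(j_1,\bar\beta)}=s_{(i_1,\bar\alpha);(j_1+2,\bar\beta)},\]
with the convention that $r_{\gamma;\delta}=0$ whenever any component of $\gamma$ or $\delta$ is negative. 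The central reduction is that, under the hypothesis $\sum_ka_ki_k<\sum_ka_kj_k$, each surviving $r_{\gamma;\delta}$ in fact equals $c_{\gamma;\delta}$: the transposed contribution $c_{\delta;\gamma}$ vanishes by Proposition \ref{ec}, because the strict inequality $\sum_ka_k\gamma_k<\sum_ka_k\delta_k$ is preserved when $(i_1,j_1)$ is replaced by $(i_1-1,j_1+1)$ or $(i_1-2,j_1+2)$ (the first sum drops by $a_1$ while the second rises by $a_1$).

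When $i_1\geq 2$ none of the three $r$-terms is killed by the negative-index convention, and solving for $r_{(i_1,\bar\alpha);(j_1,\bar\beta)}=c_{(i_1,\bar\alpha);(j_1,\bar\beta)}$ gives case (iii) directly. When $i_1=1$ the term with first index $i_1-2=-1$ drops, yielding $c_{(1,\bar\alpha);(j_1,\bar\beta)}=2c_{(0,\bar\alpha);(j_1+1,\bar\beta)}+s_{(1,\bar\alpha);(j_1+2,\bar\beta)}$; when $i_1=0$ two such terms drop and one is left with $c_{(0,\bar\alpha);(j_1,\bar\beta)}=s_{(0,\bar\alpha);(j_1+2,\bar\beta)}$, which is case (i). Substituting the latter (with $(i_1,j_1)$ replaced by $(0,j_1+1)$) into the stray $c_{(0,\bar\alpha);(j_1+1,\bar\beta)}$ in the $i_1=1$ formula and collecting then reproduces case (ii). I do not anticipate a serious obstacle; the only delicate bookkeeping is tracking whether a given $r$ collapses to a single $c$ (via Proposition \ref{ec}) or to zero (via the negative-index convention), but the relevant inequalities shift only by $\pm a_1$ at each step and are easily controlled.
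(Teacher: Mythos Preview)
Your argument is correct and follows essentially the same route as the paper. Both proofs exploit the polynomial identity $F(P,Q)=F(Q,P)$, compare the coefficient of $x_1^{i_1}\cdots x_m^{i_m}y_1^{j_1+2}\cdots y_m^{j_m}$ after expanding $(x_1-y_1)^2$, and invoke Proposition~\ref{ec} to kill the transposed $c$-contributions; your packaging of the comparison via $r_{\alpha;\beta}=c_{\alpha;\beta}-c_{\beta;\alpha}$ and $s_{\alpha;\beta}=\tilde c_{\beta;\alpha}-\tilde c_{\alpha;\beta}$ is just a notational reorganization of the paper's direct computation of the two swapped coefficients.
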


\begin{proof}
(i) The coefficient of $x_2^{i_2}\cdots x_m^{i_m}y_1^{j_1+2}y_2^{j_2}\cdots y_m^{j_m}$ in $F(P,Q)$ is 
\[c_{0,i_2,\dots,i_m;j_1,\dots,j_m}+\overline{c}_{0,i_2,\dots,i_m;j_1+2,j_2,\dots,j_m}.\]
From $\sum_{k=2}^ma_ki_k<\sum_{k=1}^ma_kj_k$ and Proposition \ref{ec}, the coefficient of $x_1^{j_1+2}x_2^{j_2}\cdots x_m^{j_m}y_2^{i_2}\cdots y_m^{i_m}$ in $F(P,Q)$ is $\overline{c}_{j_1+2,j_2,\dots,j_m;0,i_2,\dots,i_m}$. 
From $\widehat{\omega}(Q,P)=\widehat{\omega}(P,Q)$, we obtain the statement of (i). 

\vspace{1ex}

(ii) The coefficient of $x_1x_2^{i_2}\cdots x_m^{i_m}y_1^{j_1+2}y_2^{j_2}\cdots y_m^{j_m}$ in $F(P,Q)$ is 
\[c_{1,i_2,\dots,i_m;j_1,\dots,j_m}-2c_{0,i_2,\dots,i_m;j_1+1,j_2,\dots,j_m}+\overline{c}_{1,i_2,\dots,i_m;j_1+2,j_2,\dots,j_m}.\]
From $a_1+\sum_{k=2}^ma_ki_k<\sum_{k=1}^ma_kj_k$ and Proposition \ref{ec}, the coefficient of $x_1^{j_1+2}x_2^{j_2}\cdots x_m^{j_m}y_1y_2^{i_2}\cdots y_m^{i_m}$ in $F(P,Q)$ is $\overline{c}_{j_1+2,j_2,\dots,j_m;1,i_2,\dots,i_m}$. 
From $\widehat{\omega}(Q,P)=\widehat{\omega}(P,Q)$ and (i), we obtain the statement of (ii).  

\vspace{1ex}

(iii) The coefficient of $x_1^{i_1}\cdots x_m^{i_m}y_1^{j_1+2}y_2^{j_2}\cdots y_m^{j_m}$ in $F(P,Q)$ is 
\[c_{i_1,\dots,i_m;j_1,\dots,j_m}-2c_{i_1-1,i_2,\dots,i_m;j_1+1,j_2,\dots,j_m}+c_{i_1-2,i_2,\dots,i_m;j_1+2,j_2,\dots,j_m}+\overline{c}_{i_1,\dots,i_m;j_1+2,j_2,\dots,j_m}.\]
From $\sum_{k=1}^ma_ki_k<\sum_{k=1}^ma_kj_k$ and Proposition \ref{ec}, the coefficient of $x_1^{j_1+2}x_2^{j_2}\cdots x_m^{j_m}y_1^{i_1}\cdots y_m^{i_m}$ in $F(P,Q)$ is $\overline{c}_{j_1+2,j_2,\dots,j_m;i_1,\dots,i_m}$. 
From $\widehat{\omega}(Q,P)=\widehat{\omega}(P,Q)$, we obtain the statement of (iii). 
\end{proof}

%\begin{cor}
%If $\sum_{k=1}^ma_k(i_k+j_k)>4g-2$, we have $c_{i_1,\dots,i_m;j_1,\dots,j_m}=0$. 
%\end{cor}

%\begin{proof}
%From Lemma \ref{dqo} and Proposition \ref{i012}, we obtain the statement of the corollary. 
%\end{proof}

\begin{lem}\label{2021.8.6.2}
We have $c_{i_1,\dots,i_m;j_1,\dots,j_m}\in\mathbb{Z}[{\boldsymbol \l}]$ and 
$c_{i_1,\dots,i_m;j_1,\dots,j_m}$ is homogeneous of degree $2(2g-1)-\sum_{k=1}^ma_k(i_k+j_k)$ with respect to ${\boldsymbol \l}$ if $c_{i_1,\dots,i_m;j_1,\dots,j_m}\neq0$. 
\end{lem}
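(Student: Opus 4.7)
The plan is to establish the lemma by strong induction on $i_1$, using the recurrence relations of Proposition \ref{i012} as the engine and Lemma \ref{2021.8.6.1} to supply the base data. Observe first that by our choice of $\{\eta_i\}_{i=1}^g$ in Proposition \ref{ec}, we have $c_{i_1,\dots,i_m;j_1,\dots,j_m}=0$ whenever $\sum_{k=1}^m a_k i_k \ge \sum_{k=1}^m a_k j_k$, in which case the claim is trivial. Thus it suffices to treat pairs with $\sum_{k=1}^m a_k i_k < \sum_{k=1}^m a_k j_k$, which is precisely the hypothesis of Proposition \ref{i012}.

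For the base case $i_1=0$, Proposition \ref{i012}(i) expresses $c_{0,i_2,\dots,i_m;j_1,\dots,j_m}$ as a difference of two $\tilde{c}$'s. By Lemma \ref{2021.8.6.1}, each $\tilde{c}$ lies in $\mathbb{Z}[{\boldsymbol \l}]$, so the difference does as well. For homogeneity, each $\tilde{c}$ on the right has degree $2(2g-1+a_1)$ minus $a_k$ times the sum of the appropriate index pair, and the shift $j_1\mapsto j_1+2$ in the first argument precisely absorbs the excess $+2a_1$, yielding the desired degree $2(2g-1)-\sum_{k=1}^m a_k(i_k+j_k)$ on the left-hand side. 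The case $i_1=1$ is handled identically via Proposition \ref{i012}(ii), the extra summands $\tilde{c}$ with shift $j_1\mapsto j_1+3$ contributing degree $2(2g-1)-a_1-\sum_{k\ge 2}a_k(i_k+j_k)=2(2g-1)-a_1(1+j_1)-\sum_{k\ge 2}a_k(i_k+j_k)$, which again matches.

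For the inductive step $i_1\ge 2$, assume the lemma holds for all $c_{i_1',i_2,\dots,i_m;j_1',j_2,\dots,j_m}$ with $i_1'<i_1$. Proposition \ref{i012}(iii) expresses $c_{i_1,\dots,i_m;j_1,\dots,j_m}$ as a $\mathbb{Z}$-linear combination of the two previously established quantities $c_{i_1-1,i_2,\dots,i_m;j_1+1,j_2,\dots,j_m}$ and $c_{i_1-2,i_2,\dots,i_m;j_1+2,j_2,\dots,j_m}$, together with two $\tilde{c}$'s. Each summand has, by induction hypothesis or by Lemma \ref{2021.8.6.1}, the common degree $2(2g-1)-\sum_{k=1}^m a_k(i_k+j_k)$ and lies in $\mathbb{Z}[{\boldsymbol \l}]$; the sum therefore does as well. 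One should check that the shifts $j_1\mapsto j_1+1$, $j_1\mapsto j_1+2$ preserve the strict inequality $\sum_k a_k i_k'<\sum_k a_k j_k'$, so that the recursive invocations remain in the regime where the recurrence applies; this is immediate since the left side decreases by $a_1$ or $2a_1$ while the right side increases by the same.

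The only conceptual subtlety is the bookkeeping of degrees: the $\tilde{c}$'s naturally carry an excess of $2a_1$ in their weight (Lemma \ref{dqo}), and one must verify that in each clause of Proposition \ref{i012} every $\tilde{c}$ appears with an index shift $j_1\mapsto j_1+2$ (or $j_1+3$, compensated by an additional $a_1$) that exactly cancels this excess. Apart from this verification the argument is a routine two-step induction, so I expect no serious obstacle.
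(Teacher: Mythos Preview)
Your proposal is correct and follows exactly the approach the paper intends: the paper's proof is the single sentence ``From Proposition \ref{i012}, we obtain the statement of the lemma,'' and you have simply spelled out the induction on $i_1$ together with the degree bookkeeping via Lemma \ref{2021.8.6.1}. One small slip: in your discussion of case (ii) you wrote $2(2g-1)-a_1-\sum_{k\ge 2}a_k(i_k+j_k)$ for the degree of the $\tilde c$ with shift $j_1\mapsto j_1+3$, but this drops an $a_1 j_1$; the correct value is your second expression $2(2g-1)-a_1(1+j_1)-\sum_{k\ge 2}a_k(i_k+j_k)$, so the ``$=$'' there is a typo rather than an actual equality.
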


\begin{proof}
%From Proposition \ref{prop11}, Proposition \ref{prop12}, and (\ref{eq-2-5}), we have $\tilde{c}_{i_1,\dots,i_m;j_1,\dots,j_m}\in\mathbb{Z}[{\boldsymbol \l}]$ for any $(i_1,\dots,i_m), (j_1,\dots,j_m)\in B(A_m)$. 
%We have $e_{1,g+1}=1$ (cf. \cite{Aya3} Proposition 3.3). 
%We fix an integers $k$ and $\ell$ such that $2\le k< g$ and $g+2-k\le \ell\le g+w_k$ assume $e_{i,j}\in\mathbb{Z}[{\boldsymbol \l}]$ for any $1\le i\le k-1$ and $g+2-i\le j\le g+w_i$. 
%We will prove $e_{k,\ell}\in\mathbb{Z}[{\boldsymbol \l}]$.  
%We set 
%\[\widehat{\omega}(P,Q)=\frac{\widetilde{c}_{i_1,\dots,i_m;j_1,\dots,j_m}x_1^{i_1}\cdots x_m^{i_m}y_1^{j_1}\cdots y_m^{j_m}}{(x_1-y_1)^2\det G_1(P)\det G_1(Q)}dx_1dy_1,\]
%where $(i_1,\dots,i_m), (j_1,\dots,j_m)\in B(A_m)$. 
%Let $\varphi_{g-k+1}(P)=x_1^{k_1}\cdots x_m^{k_m}$ and $\varphi_{\ell}(Q)=y_1^{\ell_1}\cdots y_m^{\ell_m}$. 
%Let $\alpha$ and $\beta$ be the coefficients of $x_1^{k_1+2}x_2^{k_2}\cdots x_m^{k_m}y_1^{\ell_1}\cdots y_m^{\ell_m}$ and $x_1^{\ell_1}\cdots x_m^{\ell_m}y_1^{k_1+2}y_2^{k_2}\cdots y_m^{k_m}$ in $\widehat{\omega}(P,Q)$, respectively. 
From Proposition \ref{i012}, we obtain the statement of the lemma. 
\end{proof}

We define $\widehat{c}_{i_1,\dots,i_m;j_1,\dots,j_m}$ by 
\[
F(P,Q)=\sum\widehat{c}_{i_1,\dots,i_m;j_1,\dots,j_m}x_1^{i_1}\cdots x_m^{i_m}y_1^{j_1}\cdots y_m^{j_m},
\]
where the summation is taken over $(i_1,\dots,i_m), (j_1,\dots,j_m)\in B(A_m)$. 

\begin{lem}\label{2021.8.6.3}
We have $\widehat{c}_{i_1,\dots,i_m;j_1,\dots,j_m}\in\mathbb{Z}[{\boldsymbol \l}]$ 
and $\widehat{c}_{i_1,\dots,i_m;j_1,\dots,j_m}$ is homogeneous of degree $2(2g-1+a_1)-\sum_{k=1}^ma_k(i_k+j_k)$ with respect to ${\boldsymbol \l}$ if $\widehat{c}_{i_1,\dots,i_m;j_1,\dots,j_m}\neq0$. 
\end{lem}

\begin{proof}
From Lemmas \ref{2021.8.6.1} and \ref{2021.8.6.2}, we obtain the statement of the lemma.
\end{proof}

The Klein's fundamental $2$-form $\widehat{\omega}(P,Q)$ is expanded around $\infty\times\infty$ with respect to the arithmetic local parameter $t$ as follows: 
\begin{equation}
\widehat{\omega}(P,Q)=\left(\frac{1}{(t_P-t_Q)^2}+\sum_{i,j\ge1}q_{i,j}t_P^{i-1}t_Q^{j-1}\right)dt_Pdt_Q,\;\;\;\;q_{i,j}\in\mathbb{C},\label{2021.8.6.4}
\end{equation}
where $t_P=t(P)$ and $t_Q=t(Q)$. 
From $\widehat{\omega}(Q,P)=\widehat{\omega}(P,Q)$, we have $q_{j,i}=q_{i,j}$ for any $i,j$. 

\begin{prop}\label{2022.3.20.1}
We have $q_{i,j}\in\mathbb{Z}[{\boldsymbol \l}]$ 
and $q_{i,j}$ is homogeneous of degree $i+j$ with respect to ${\boldsymbol \l}$ if $q_{i,j}\neq0$. 
\end{prop}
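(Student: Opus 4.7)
The plan is to substitute the integral $t$-expansions of each ingredient into
\[\widehat\omega(P,Q)=\frac{F(P,Q)}{(x_1-y_1)^2}\,\omega_g(P)\,\omega_g(Q),\]
which is obtained from the defining formula for $\widehat\omega$ by absorbing $dx_1/\det G_1(P)$ into $-\omega_g(P)$ (valid since $\varphi_1=1$; this identity is implicit in the proof of Proposition \ref{2021.8.8.1}). Three integralities combine: $\omega_g(P)=t_P^{2g-2}A(t_P)\,dt_P$ with $A(t_P)\in\mathbb{Z}[\boldsymbol{\lambda}][[t_P]]$ and $A(0)=1$ (Proposition \ref{2021.8.8.1}); each monomial $x^I,y^J$ has a Laurent expansion in $t$ with $\mathbb{Z}[\boldsymbol{\lambda}]$ coefficients (Propositions \ref{prop11}, \ref{prop12}); and $F(P,Q)=\sum\overline{c}_{I,J}\,x^Iy^J$ with $\overline{c}_{I,J}\in\mathbb{Z}[\boldsymbol{\lambda}]$ (Lemma \ref{2021.8.6.3}).

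The singular factor is isolated as follows. Writing $U(t):=t^{a_1}x_1=1+\sum_{k\ge1}p_{1,k}t^k$ with $p_{1,k}\in\mathbb{Z}[\boldsymbol{\lambda}]$ and $U(0)=1$ (Propositions \ref{prop11}, \ref{prop12}),
\[x_1-y_1=-t_P^{-a_1}t_Q^{-a_1}(t_P-t_Q)\,V(t_P,t_Q),\qquad V:=\frac{t_P^{a_1}U(t_Q)-t_Q^{a_1}U(t_P)}{t_P-t_Q}\in\mathbb{Z}[\boldsymbol{\lambda}][[t_P,t_Q]].\]
Then $V$ is symmetric, $V(t_P,0)=t_P^{a_1-1}$, and its lowest homogeneous part $V_0=\sum_{i+j=a_1-1}t_P^it_Q^j$ is a primitive polynomial over $\mathbb{Z}[\boldsymbol{\lambda}]$. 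Substituting and multiplying by $(t_P-t_Q)^2$ gives
\[M(t_P,t_Q):=(t_P-t_Q)^2\,\frac{\widehat\omega(P,Q)}{dt_P\,dt_Q}=\frac{N(t_P,t_Q)}{V(t_P,t_Q)^2},\quad N:=F(P,Q)\,(t_Pt_Q)^{2(a_1+g-1)}A(t_P)A(t_Q),\]
and by (\ref{2021.8.6.4}) one has $M=1+(t_P-t_Q)^2\sum_{i,j\ge1}q_{i,j}t_P^{i-1}t_Q^{j-1}$. The regularity of $\widehat\omega$ off the diagonal forces the pole orders of the terms of $F$ in each single variable to be bounded by $2(a_1+g-1)$, so $(t_Pt_Q)^{2(a_1+g-1)}$ absorbs all Laurent tails and $N\in\mathbb{Z}[\boldsymbol{\lambda}][[t_P,t_Q]]$; comparing lowest-degree parts, $N$ begins at total degree $2(a_1-1)$ with leading homogeneous part exactly $V_0^2$.

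The conclusion follows by induction on total degree in $t_P,t_Q$ via Gauss's lemma. Decomposing $N=\sum N_d$, $V^2=\sum V^2_i$, $M=\sum M_k$ into homogeneous components and equating the degree $2(a_1-1)+k$ part of $N=V^2M$ yields
\[V_0^2\,M_k=N_{2(a_1-1)+k}-\sum_{\ell=1}^{k}V^2_{2(a_1-1)+\ell}\,M_{k-\ell},\qquad M_0=1.\]
By the inductive hypothesis the right-hand side lies in $\mathbb{Z}[\boldsymbol{\lambda}][t_P,t_Q]$, and $M_k$ lies a priori in $\mathbb{Q}[\boldsymbol{\lambda}][t_P,t_Q]$ since the algebraic construction of $\widehat\omega$ via Lemma \ref{2022.8.6.1234} has $\mathbb{Q}[\boldsymbol{\lambda}]$ coefficients throughout. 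Because $V_0^2$ is primitive in the UFD $\mathbb{Z}[\boldsymbol{\lambda}][t_P,t_Q]$, Gauss's lemma gives $M_k\in\mathbb{Z}[\boldsymbol{\lambda}][t_P,t_Q]$ at each step; hence every $q_{i,j}\in\mathbb{Z}[\boldsymbol{\lambda}]$. Assigning $\deg t=-1$ and $\deg dt=0$ yields $\deg\widehat\omega=2$, so the identity $\deg(q_{i,j}t_P^{i-1}t_Q^{j-1})=2$ gives $\deg q_{i,j}=i+j$. The main obstacle is this Gauss's-lemma step: since $V$ is not a unit in $\mathbb{Z}[\boldsymbol{\lambda}][[t_P,t_Q]]$ whenever $a_1\ge2$, integral division by $V_0^2$ is not automatic, and its validity depends crucially on every coefficient of $V_0$ being equal to $1$.
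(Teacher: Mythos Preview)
Your argument is correct in outline and close in spirit to the paper's, but organized differently. Both proofs clear the diagonal singularity by multiplying by $(x_1-y_1)^2$ (equivalently, by $(t_P-t_Q)^2V^2$ after renormalization) and then extract the $q_{i,j}$ from an identity whose remaining ingredients are $\mathbb{Z}[\boldsymbol\lambda]$-integral. The paper, however, avoids any division: it keeps the full factor $\bigl(t_Q^{a_1}U(t_P)-t_P^{a_1}U(t_Q)\bigr)^2=(t_P-t_Q)^2V^2$ on the right-hand side and simply reads off the coefficient of $t_P^{i_0-1}t_Q^{j_0+2a_1-1}$, running a lexicographic induction first on $i+j$, then on $i$. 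The point is that the monomial $t_Q^{2a_1}$ in the square has coefficient $1$, so $q_{i_0,j_0}$ appears with coefficient~$1$ while every other $q_{i,j}$ that shows up lies earlier in the induction order. You instead pass to homogeneous components and divide by $V_0^2$ via Gauss's lemma; the underlying mechanism is the same.

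One caution: you invoke Gauss's lemma in ``the UFD $\mathbb{Z}[\boldsymbol\lambda][t_P,t_Q]$'', but in this paper the $\lambda^{(i)}_{j_1,\dots,j_m}$ are fixed complex numbers, so $\mathbb{Z}[\boldsymbol\lambda]$ need not be a UFD and Gauss's lemma is not directly available. The repair is easy---either work universally with the $\lambda$'s as indeterminates (where the ring genuinely is a polynomial ring over $\mathbb{Z}$) and specialize at the end, or simply note that $V_0^2$ is \emph{monic} in $t_P$: its $t_P^{2(a_1-1)}$-coefficient equals $1$, so the equation $V_0^2M_k=(\text{element of }\mathbb{Z}[\boldsymbol\lambda][t_P,t_Q])$ can be solved for the coefficients of $M_k$ by downward induction on the $t_P$-degree over any base ring. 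That second observation is precisely what the paper's coefficient comparison exploits, so your remark that ``its validity depends crucially on every coefficient of $V_0$ being equal to~$1$'' is slightly off---what actually matters is that the \emph{extreme} coefficient of $V_0$ is $1$.
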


\begin{proof}
From (\ref{xi}) and Proposition \ref{prop11}, we have 
\begin{equation}
t_P^{2a_1}t_Q^{2a_1}(x_1-y_1)^2=\left(t_Q^{a_1}+\sum_{k=1}^{\infty}p_{1,k}t_P^kt_Q^{a_1}-t_P^{a_1}-\sum_{k=1}^{\infty}p_{1,k}t_P^{a_1}t_Q^{k}\right)^2.\label{2022.3.19.111}
\end{equation}
From Proposition \ref{prop12}, we have 
\begin{equation}
t_P^{2a_1}t_Q^{2a_1}(x_1-y_1)^2=(t_P-t_Q)^2\sum_{i,j\ge0}\nu_{i,j}t_P^it_Q^j,\label{2022.3.19.1}
\end{equation}
where $\nu_{i,j}\in\mathbb{Z}[{\boldsymbol \l}]$ and $\nu_{i,j}$ is homogeneous of degree $2-2a_1+i+j$ with respect to ${\boldsymbol \l}$ if $\nu_{i,j}\neq0$. 
From (\ref{2021.8.6.4}), (\ref{2022.3.19.111}), and (\ref{2022.3.19.1}), around $\infty\times\infty$, we have 
\begin{align}
&t_P^{2a_1}t_Q^{2a_1}(x_1-y_1)^2\widehat{\omega}(P,Q)=\nonumber\\
&\left\{\sum_{i,j\ge0}\nu_{i,j}t_P^it_Q^j+\left(t_Q^{a_1}+\sum_{k=1}^{\infty}p_{1,k}t_P^kt_Q^{a_1}-t_P^{a_1}-\sum_{k=1}^{\infty}p_{1,k}t_P^{a_1}t_Q^{k}\right)^2\left(\sum_{i,j\ge1}q_{i,j}t_P^{i-1}t_Q^{j-1}\right)\right\}dt_Pdt_Q.\label{2021.8.7.111}
\end{align}
Therefore, around $\infty\times\infty$, we have 
\begin{equation}
t_P^{2a_1}t_Q^{2a_1}(x_1-y_1)^2\widehat{\omega}(P,Q)=\left(\sum_{i,j\ge0}\widehat{q}_{i,j}t_P^it_Q^j\right)dt_Pdt_Q,\;\;\;\;\widehat{q}_{i,j}\in\mathbb{C}.\label{2022.3.19.11111}
\end{equation}
From Propositions \ref{prop12}, \ref{2021.8.8.1}, and Lemma \ref{2021.8.6.3}, we have $\widehat{q}_{i,j}\in\mathbb{Z}[{\boldsymbol \l}]$ and $\widehat{q}_{i,j}$ is homogeneous of degree $2-2a_1+i+j$ with respect to ${\boldsymbol \l}$ if $\widehat{q}_{i,j}\neq0$. 
From (\ref{2021.8.7.111}) and (\ref{2022.3.19.11111}), we have 
\begin{align}
&\sum_{i,j\ge0}(\widehat{q}_{i,j}-\nu_{i,j})t_P^it_Q^j\nonumber\\
&=\left(t_Q^{a_1}+\sum_{k=1}^{\infty}p_{1,k}t_P^kt_Q^{a_1}-t_P^{a_1}-\sum_{k=1}^{\infty}p_{1,k}t_P^{a_1}t_Q^{k}\right)^2\left(\sum_{i,j\ge1}q_{i,j}t_P^{i-1}t_Q^{j-1}\right).\label{2021.8.7.1}
\end{align}
By comparing the coefficients of $t_Q^{2a_1}$ in the both sides of (\ref{2021.8.7.1}), we have $q_{1,1}\in\mathbb{Z}[{\boldsymbol \l}]$ and $q_{1,1}$ is homogeneous of degree $2$ with respect to ${\boldsymbol \l}$ if $q_{1,1}\neq0$. 
We take a pair of positive integers $(i_0,j_0)$. 
For any $(i,j)\in\mathbb{N}^2$ such that 

\begin{itemize}
\item $i+j< i_0+j_0$ or 

\item $i+j=i_0+j_0$ and $i<i_0$, 
\end{itemize}
we assume that $q_{i,j}\in\mathbb{Z}[{\boldsymbol \l}]$ 
and $q_{i,j}$ is homogeneous of degree $i+j$ with respect to ${\boldsymbol \l}$ if $q_{i,j}\neq0$.  
By comparing the coefficients of $t_P^{i_0-1}t_Q^{j_0+2a_1-1}$ in the both sides of (\ref{2021.8.7.1}), we have $q_{i_0,j_0}\in\mathbb{Z}[{\boldsymbol \l}]$ and $q_{i_0,j_0}$ is homogeneous of degree $i_0+j_0$ with respect to ${\boldsymbol \l}$ if $q_{i_0,j_0}\neq0$. 
By induction, we obtain the statement of the proposition. 
\end{proof}

\section{Hurwitz integrality of the power series expansion of the sigma function for the telescopic curve}

\begin{defn}
For a subring $R$ of $\mathbb{C}$ and variables $z={}^t(z_1,\dots, z_n)$, let
\[R\langle\langle z \rangle\rangle=R\langle\langle z_1,\dots,z_n\rangle\rangle=\left\{\sum_{i_1,\dots,i_n\ge0}\kappa_{i_1,\dots,i_n}\frac{z_1^{i_1}\cdots z_n^{i_n}}{i_1!\cdots i_n!}\;\middle|\;\kappa_{i_1,\dots,i_n}\in R\right\}.\]
%For a holomorphic function $f(u)=f(u_1,\dots,u_d)$ on $\mathbb{C}^d$, we expand $f(u)$ around the origin as 
%\[f(u)=\sum_{n_1,\dots,n_d\ge0}\zeta_{n_1,\dots,n_d}\frac{u_1^{n_1}\cdots u_d^{n_d}}{n_1!\cdots n_d!},\;\;\;\;\zeta_{n_1,\dots,n_d}\in\mathbb{C}.\]]
If the power series expansion of a holomorphic function $f(z)=f(z_1,\dots,z_n)$ on $\mathbb{C}^n$ around the origin belongs to $R\langle\langle z \rangle\rangle$, then we write $f(z)\in R\langle\langle z \rangle\rangle$ and $f(z)$ is said to be \textit{Hurwitz integral} over $R$. 
\end{defn}

%For a subring $R$ of $\mathbb{C}$, if all the coefficients $\zeta_{n_1,\dots,n_d}$ are included in $R$, then $f(u)$ is said to be \textit{Hurwitz integral} over $R$. 
Let $W=\{w_1,\dots,w_g\}$ and $u={}^t(u_1, \dots, u_g)$. 
For any partition $\mu$ and the Schur function $S_{\mu}(T)$, we substitute $T_{w_i}=u_i$ for $1\le i\le g$ and $T_j=0$ for any $j$ satisfying $j\notin W$, and denote it by $S_{\mu}(u)$. 

\begin{lem}\la{schur}
For any partition $\mu$, we have $S_{\mu}(u)\in\mathbb{Z}\langle\langle u \rangle\rangle$.
\end{lem}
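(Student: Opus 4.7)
The plan is to reduce the claim to two elementary facts: that $\mathbb{Z}\langle\langle u\rangle\rangle$ is a subring of $\mathbb{Q}[[u_1,\dots,u_g]]$, and that each $p_n(u)$ already lies in $\mathbb{Z}\langle\langle u\rangle\rangle$. Granted these two, the determinantal formula $S_\mu(T)=\det(p_{\mu_i-i+j}(T))_{1\le i,j\le \ell}$ exhibits $S_\mu(u)$ as a $\mathbb{Z}$-polynomial expression in quantities that already belong to $\mathbb{Z}\langle\langle u\rangle\rangle$, and the ring property finishes the proof.

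First I would verify that $\mathbb{Z}\langle\langle u\rangle\rangle$ is closed under multiplication. Writing $f=\sum_{\alpha}a_\alpha\,u^\alpha/\alpha!$ and $g=\sum_{\beta}b_\beta\,u^\beta/\beta!$ with $a_\alpha,b_\beta\in\mathbb{Z}$, a direct computation gives
\[
fg=\sum_{\gamma}\left(\sum_{\alpha+\beta=\gamma}\binom{\gamma}{\alpha}a_\alpha b_\beta\right)\frac{u^\gamma}{\gamma!},
\]
where $\binom{\gamma}{\alpha}=\gamma!/(\alpha!\beta!)$ is the multinomial coefficient, which is an integer. Closure under sums is immediate. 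Thus $\mathbb{Z}\langle\langle u\rangle\rangle$ is a $\mathbb{Z}$-subalgebra of $\mathbb{Q}[[u]]$.

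Next I would apply Lemma \ref{2022.3.19.99}, which gives $p_n(T)=\sum T_1^{k_1}\cdots T_n^{k_n}/(k_1!\cdots k_n!)$ over $(k_1,\dots,k_n)\in\mathbb{Z}_{\ge 0}^n$ satisfying $\sum jk_j=n$. Specializing $T_{w_i}=u_i$ for $1\le i\le g$ and $T_j=0$ for $j\notin W$ kills every monomial containing some $T_j$ with $j\notin W$, so
\[
p_n(u)=\sum_{\substack{(k_{w_1},\dots,k_{w_g})\in\mathbb{Z}_{\ge 0}^g\\ \sum_{i=1}^g w_i k_{w_i}=n}}\frac{u_1^{k_{w_1}}\cdots u_g^{k_{w_g}}}{k_{w_1}!\cdots k_{w_g}!}.
\]
Each surviving term is exactly of the shape $u_1^{n_1}\cdots u_g^{n_g}/(n_1!\cdots n_g!)$ with coefficient $1$, so the coefficient $\zeta_{n_1,\dots,n_g}$ in the definition of $\mathbb{Z}\langle\langle u\rangle\rangle$ is either $0$ or $1$. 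Hence $p_n(u)\in\mathbb{Z}\langle\langle u\rangle\rangle$ for every $n\ge 0$, with $p_n(u)=0$ for $n<0$.

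Finally, expanding the determinant $S_\mu(u)=\det(p_{\mu_i-i+j}(u))_{1\le i,j\le\ell}$ as a signed sum of products, each summand is a product of elements of $\mathbb{Z}\langle\langle u\rangle\rangle$ with coefficient $\pm 1$, so it lies in $\mathbb{Z}\langle\langle u\rangle\rangle$ by the ring property established above. Summing over permutations yields $S_\mu(u)\in\mathbb{Z}\langle\langle u\rangle\rangle$. There is no real obstacle here; the only point that requires noting is the integrality of the multinomial coefficients in the product formula for the Hurwitz-integral ring, which is standard.
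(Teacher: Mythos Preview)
Your proposal is correct and follows essentially the same approach as the paper: both use Lemma~\ref{2022.3.19.99} to see that each $p_n(u)$ lies in $\mathbb{Z}\langle\langle u\rangle\rangle$, and both then appeal to integrality of (multi)binomial coefficients to handle the products appearing in the determinantal expansion of $S_\mu(u)$. The only difference is presentational---you package the closure under products as a general ring property of $\mathbb{Z}\langle\langle u\rangle\rangle$, whereas the paper writes out the single-monomial product $\frac{u^m}{m!}\cdot\frac{u^n}{n!}=\binom{m+n}{m}\frac{u^{m+n}}{(m+n)!}$ directly.
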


\begin{proof}
For an integer $n\ge1$ and the polynomial $p_n(T)$ (cf. Section \ref{2022.3.19.9}), we substitute $T_{w_i}=u_i$ for $1\le i\le g$ and $T_j=0$ for any $j$ satisfying $j\notin W$, and denote it by $p_n(u)$. 
Let $p_0(u)=1$ and $p_n(u)=0$ for $n<0$. 
From Lemma \ref{2022.3.19.99}, for $n\ge0$, we have 
\[p_n(u)=\sum\frac{u_1^{n_1}\cdots u_g^{n_g}}{n_1!\cdots n_g!},\]
where the summation is taken over $(n_1,\dots,n_g)\in\mathbb{Z}_{\ge0}^g$ satisfying $w_1n_1+\cdots+w_gn_g=n$. 
We have 
\[S_{\mu}(u)=\det \left(p_{\mu_i-i+j}(u)\right)_{1\le i,j\le \ell},\]
where $\mu=(\mu_1,\mu_2,\dots,\mu_{\ell})$. For integers $m_1,\dots,m_g, n_1,\dots,n_g\ge0$, we have
\[\frac{u_1^{m_1}\cdots u_g^{m_g}}{m_1!\cdots m_g!}\frac{u_1^{n_1}\cdots u_g^{n_g}}{n_1!\cdots n_g!}=\left(\begin{matrix}m_1+n_1\\m_1\end{matrix}\right)
\cdots\left(\begin{matrix}m_g+n_g\\m_g\end{matrix}\right)\frac{u_1^{m_1+n_1}\cdots u_g^{m_g+n_g}}{(m_1+n_1)!\cdots(m_g+n_g)!}.\]
Since the binomial coefficients $\left(\begin{matrix}m_1+n_1\\m_1\end{matrix}\right),\dots,\left(\begin{matrix}m_g+n_g\\m_g\end{matrix}\right)$ are integers, we obtain the statement of the lemma.
\end{proof}

\begin{lem}\label{2022.3.20.888}
%Let $R$ be an integral domain with characteristic $0$. 
Let $R$ be a subring of $\mathbb{C}$, $f(u)=f(u_1,\dots,u_g)$ be a holomorphic function on $\mathbb{C}^g$, and $M\in M_g(R)$. 
If $f(u)\in R\langle\langle u \rangle\rangle$, then we have $f(Mu)\in R\langle\langle u \rangle\rangle$. 
%For any $f(u)\in R\langle\langle u\rangle\rangle$ and $g\times g$ matrix $M$ such that all the components are included in $R$, we have $f(Mu)\in R\langle\langle u\rangle\rangle$. 
\end{lem}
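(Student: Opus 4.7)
The plan is to compute $f(Mu)$ directly by expanding each linear form $(Mu)_i = \sum_{j=1}^g m_{ij} u_j$ via the multinomial theorem, then regrouping the result in Hurwitz form. Writing $f(u) = \sum_{k_1,\dots,k_g \geq 0} \zeta_{k_1,\dots,k_g} \frac{u_1^{k_1}\cdots u_g^{k_g}}{k_1!\cdots k_g!}$ with each $\zeta_{k_1,\dots,k_g} \in R$, the multinomial theorem gives
\[
\frac{(Mu)_i^{k_i}}{k_i!} \;=\; \sum_{\substack{n_{i,1}+\cdots+n_{i,g} = k_i \\ n_{i,j}\geq 0}} \prod_{j=1}^g \frac{m_{ij}^{n_{i,j}}\, u_j^{n_{i,j}}}{n_{i,j}!}.
\]
Taking the product over $i = 1,\ldots,g$, multiplying by $\zeta_{k_1,\dots,k_g}$, and collecting by the monomials $u_1^{N_1}\cdots u_g^{N_g}$ with $N_j = \sum_i n_{i,j}$, I obtain a formal expression for $f(Mu)$.

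The key observation is that when I convert this expression to Hurwitz form, i.e., when I multiply the coefficient of $u_1^{N_1}\cdots u_g^{N_g}$ by $N_1!\cdots N_g!$, the resulting weight
\[
\prod_{j=1}^g \frac{N_j!}{\prod_{i=1}^g n_{i,j}!} \;=\; \prod_{j=1}^g \binom{N_j}{n_{1,j},\,n_{2,j},\,\ldots,\,n_{g,j}}
\]
is a product of multinomial coefficients and is therefore an integer. Consequently, every summand contributing to the coefficient of $\frac{u_1^{N_1}\cdots u_g^{N_g}}{N_1!\cdots N_g!}$ in $f(Mu)$ has the shape $(\text{integer}) \cdot \zeta_{k_1,\dots,k_g} \cdot \prod_{i,j} m_{ij}^{n_{i,j}}$, and hence lies in $R$.

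To finish, I need to ensure that the coefficient of $\frac{u_1^{N_1}\cdots u_g^{N_g}}{N_1!\cdots N_g!}$ is a \emph{finite} sum of such summands, so that its membership in $R$ is automatic (no convergence issue inside $R$). This is clear: the constraints $\sum_i n_{i,j} = N_j$ force each $n_{i,j} \leq N_j$, and then $k_i = \sum_j n_{i,j} \leq \sum_j N_j$, so only finitely many $(k_i)$ and $(n_{i,j})$ contribute for fixed $(N_1,\ldots,N_g)$. The rearrangement of the series is legitimate because $f$ is entire on $\mathbb{C}^g$, so its Taylor series converges absolutely at the point $Mu$ for every $u$.

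There is no serious obstacle in this argument; it is essentially bookkeeping. The only thing to be careful about is isolating the integer multinomial factor cleanly and checking that the triple sum (over $k$, over the splittings $n_{i,j}$ of $k_i$, and over monomials) is reindexed without loss. Once this is done, integrality over $R$ follows immediately from $R$ being a subring of $\mathbb{C}$ containing all the $m_{ij}$ and all the Hurwitz coefficients $\zeta_{k_1,\ldots,k_g}$.
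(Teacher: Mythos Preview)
Your argument is correct and follows essentially the same approach as the paper: both proofs rest on the multinomial expansion
\[
\frac{(m_{i,1}u_1+\cdots+m_{i,g}u_g)^{k_i}}{k_i!}=\sum_{n_{i,1}+\cdots+n_{i,g}=k_i} m_{i,1}^{n_{i,1}}\cdots m_{i,g}^{n_{i,g}}\,\frac{u_1^{n_{i,1}}\cdots u_g^{n_{i,g}}}{n_{i,1}!\cdots n_{i,g}!},
\]
which shows each factor already lies in $R\langle\langle u\rangle\rangle$. The paper stops there, implicitly invoking that $R\langle\langle u\rangle\rangle$ is closed under products (the binomial/multinomial coefficient fact established in the preceding lemma); you instead carry out the product and regrouping explicitly, exhibit the multinomial coefficients $\prod_j\binom{N_j}{n_{1,j},\ldots,n_{g,j}}$, and verify finiteness of each coefficient sum. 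This is the same idea with more bookkeeping spelled out.
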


\begin{proof}
Let $M=(m_{i,j})_{1\le i,j\le g}$, where $m_{i,j}\in R$. 
For any integer $n\ge0$, we have 
\[\frac{(m_{i,1}u_1+\cdots+m_{i,g}u_g)^n}{n!}=\sum m_{i,1}^{n_1}\cdots m_{i,g}^{n_g}\frac{u_1^{n_1}\cdots u_g^{n_g}}{n_1!\cdots n_g!},\]
where the summation is taken over $(n_1,\dots,n_g)\in\mathbb{Z}_{\ge0}^g$ satisfying $n_1+\cdots+n_g=n$. 
Thus, we obtain the statement of the lemma. 
\end{proof}

%We set $e_i=t^{i+1}$, where $t$ is the arithmetic local parameter. 
We expand $t^{g-1}\varphi_j$ around $\infty$ with respect to the arithmetic local parameter $t$ 
\[t^{g-1}\varphi_j=\sum_i\xi_{i,j}t^i.\]
From Proposition \ref{prop12}, we have $\xi_{i,j}\in\mathbb{Z}[{\boldsymbol \l}]$. 
For $j>g$, we have 
\[\xi_{i,j}=\left\{\begin{array}{ll} 0 & \;\;\mbox{if}\;\;i<-j \\ 1 & \;\;\mbox{if} \;\;i=-j.\end{array}\right.\]
%\[t^g\varphi_j=t^{-j+1}+\sum_{i>-j}\xi_{i,j}t^{i+1}.\]
For a partition $\mu=(\mu_1,\mu_2,\dots)$, we define
\[\xi_{\mu}=\det(\xi_{m_i,j})_{i,j\in\mathbb{N}}=\left|\begin{matrix}\xi_{m_1,1} &\xi_{m_1,2}&\xi_{m_1,3} & \cdots\\ \xi_{m_2,1}&\xi_{m_2,2}&\xi_{m_2,3}&\cdots\\  \xi_{m_3,1}&\xi_{m_3,2}&\xi_{m_3,3}&\cdots\\\vdots&\vdots&\vdots&\ddots\end{matrix}\right|,\]
where $m_i=\mu_i-i$ and the infinite determinant is well defined.
Then we have $\xi_{\mu}\in\mathbb{Z}[{\boldsymbol \l}]$.
The tau function $\tau(u)$ is defined by
\[\tau(u)=\sum_{\mu}\xi_{\mu}S_{\mu}(u),\]
where the summation is taken over all partitions.

\begin{prop}\la{tauhe}
We have $\tau(u)\in\mathbb{Z}[{\boldsymbol \l}]\langle\langle u \rangle\rangle$.
\end{prop}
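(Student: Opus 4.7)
The plan rests on combining two facts already established in the excerpt: every individual summand $\xi_\mu S_\mu(u)$ lies in $\mathbb{Z}[\boldsymbol{\lambda}]\langle\langle u\rangle\rangle$ (by Lemma \ref{schur} together with the remark that $\xi_\mu \in \mathbb{Z}[\boldsymbol{\lambda}]$), and only finitely many $\mu$ contribute to the coefficient of any given Hurwitz monomial $u_1^{k_1}\cdots u_g^{k_g}/(k_1!\cdots k_g!)$ in $\tau(u)=\sum_\mu \xi_\mu S_\mu(u)$. Once both points are secured, the coefficient in question is a finite $\mathbb{Z}$-linear combination of $\xi_\mu$'s, hence lies in $\mathbb{Z}[\boldsymbol{\lambda}]$, which is exactly the claim.

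To establish the finiteness, I would work with the weighted grading $\deg u_i = w_i$ and show that $S_\mu(u)$ is weighted-homogeneous of weight $|\mu|$. Indeed, Lemma \ref{2022.3.19.99} together with the substitution $T_{w_i}=u_i$, $T_j=0$ for $j\notin W$, exhibits $p_n(u)$ as a sum of monomials $u_1^{n_1}\cdots u_g^{n_g}/(n_1!\cdots n_g!)$ subject to $w_1n_1+\cdots+w_gn_g=n$, so $p_n(u)$ is weighted-homogeneous of weight $n$. Expanding
\[
S_\mu(u) = \sum_{\sigma}\mathrm{sgn}(\sigma)\prod_{i=1}^{\ell} p_{\mu_i-i+\sigma(i)}(u),
\]
each summand is a product whose weights add to $\sum_i(\mu_i-i+\sigma(i))=|\mu|$, since $\sigma$ permutes $\{1,\dots,\ell\}$. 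Hence the monomial $u_1^{k_1}\cdots u_g^{k_g}$ can appear in $S_\mu(u)$ only when $|\mu|=w_1k_1+\cdots+w_gk_g$, and only finitely many partitions have any fixed size.

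With these ingredients in place, the argument is short. Fix $(k_1,\dots,k_g)\in\mathbb{Z}_{\ge 0}^g$ and set $N=w_1k_1+\cdots+w_gk_g$. Only partitions $\mu$ with $|\mu|=N$ contribute to the coefficient of $u_1^{k_1}\cdots u_g^{k_g}/(k_1!\cdots k_g!)$ in $\tau(u)$, and by Lemma \ref{schur} each such contribution is an integer multiple of $\xi_\mu\in\mathbb{Z}[\boldsymbol{\lambda}]$. Summing over the finitely many partitions of $N$ yields an element of $\mathbb{Z}[\boldsymbol{\lambda}]$, proving $\tau(u)\in\mathbb{Z}[\boldsymbol{\lambda}]\langle\langle u\rangle\rangle$.

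I do not anticipate a serious obstacle here; the whole argument is a clean bookkeeping exercise once one recognises that the infinite sum $\sum_\mu \xi_\mu S_\mu(u)$ is locally finite at the level of Hurwitz coefficients by a weight argument. The only point worth verifying carefully is the weighted homogeneity of $S_\mu(u)$, which is immediate from the determinantal formula combined with Lemma \ref{2022.3.19.99}.
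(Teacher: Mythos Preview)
Your proposal is correct and follows the same approach as the paper, which simply cites Lemma~\ref{schur} and the integrality of $\xi_\mu$; you additionally supply a careful justification of the local finiteness of $\sum_\mu \xi_\mu S_\mu(u)$ via the weighted homogeneity of $S_\mu(u)$, a point the paper leaves implicit.
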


\begin{proof}
From Lemma \ref{schur}, we obtain the statement of the proposition. 
\end{proof}

For $k\ge1$, we define $c_k$ by 
\begin{equation}
\sum_{k=1}^{\infty}c_kt^{k-1}=\frac{1}{2}\frac{\frac{d}{dt}\left(1+\sum_{j=1}^{\infty}b_{g,j}t^j\right)}{1+\sum_{j=1}^{\infty}b_{g,j}t^j},\label{2022.3.20.22}
\end{equation}
where $b_{g,j}$ are defined in (\ref{2023.10.1}). 
For $1\le i\le g$, we expand $\omega_i$ around $\infty$ with respect to $t$ as follows: 
\[\omega_i=\sum_{j=1}^{\infty}\widehat{b}_{i,j}t^{j-1}dt,\;\;\;\;\;\widehat{b}_{i,j}\in\mathbb{C}.\]
From Proposition \ref{2021.8.8.1}, we have $\widehat{b}_{i,j}\in\mathbb{Z}[{\boldsymbol \l}]$ and 
\[\widehat{b}_{i,j}=\left\{\begin{array}{ll} 0 &\;\; \mbox{if}\;\;j<w_i \\ 1 & \;\;\mbox{if} \;\;j=w_i\end{array}\right..\]
We define the $g\times g$ matrix
\[B=(\widehat{b}_{i,w_j})_{1\le i, j\le g}=\left(\begin{matrix}1&\widehat{b}_{1,w_2}&\widehat{b}_{1,w_3}&\cdots&\widehat{b}_{1,w_g}\\0&1&\widehat{b}_{2,w_3}&\cdots & \widehat{b}_{2,w_g}\\ 0&0&1&\cdots &\widehat{b}_{3,w_g}\\\vdots &\vdots &\vdots &\ddots &\vdots \\ 0&0&0&\cdots& 1\end{matrix}\right),\]
$c=(c_{w_1},\dots,c_{w_g})$, and the $g\times g$ matrix $N=(q_{w_i,w_j})_{1\le i,j\le g}$. 
%=\left(\begin{matrix}q_{w_1,w_1}&q_{w_1,w_2}&\cdots &q_{w_1,w_g}\\ q_{w_2,w_1}&q_{w_2,w_2}&\cdots &q_{w_2,w_g}\\\vdots&\vdots&\ddots&\vdots\\q_{w_g,w_1}&q_{w_g,w_2}&\cdots &q_{w_g,w_g}\end{matrix}\right).\]
%We consider the sigma function $\sigma(u)$ associated with the telescopic curve $X$. 

\begin{thm}[{\cite[Theorem 1]{Aya2}, \cite[Theorem 8]{N-2010-2}}]\label{2021.9.9}
For $v={}^t(v_1,\dots,v_g)\in\mathbb{C}^g$, the following relation holds: 
\[\tau(v)=\exp\left(-cv+\frac{1}{2}{}^tvNv\right)\sigma(Bv).\]
\end{thm}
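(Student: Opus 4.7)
The identity connects two encodings of the geometric data of $(X,\infty)$: the sigma function is a modular-invariant modification of Riemann's theta function, while the tau function is built from Pl\"ucker coordinates of a point of the Sato Grassmannian determined by $(X,\infty,t)$. The plan is to express both sides in terms of the Riemann theta function of the Jacobian and then match the exponential corrections.

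First, I would identify $\tau(v)=\sum_\mu \xi_\mu S_\mu(v)$ with the KP tau function of the subspace $W\subset \mathbb{C}((t))$ spanned by $\{t^{g-1}\varphi_j\}_{j\ge 1}$. The frame $(\xi_{i,j})$ is exactly the Laurent expansion of this basis at $\infty$, and the sum over partitions is the standard Pl\"ucker expansion in Miwa variables $T_{w_i}=v_i$. The Krichever/Segal--Wilson correspondence then yields a formula
\[
\tau(v)=\exp\!\Bigl(Av+\tfrac12\,{}^tv\,M\,v\Bigr)\,\theta\!\bigl((2\omega')^{-1}Bv-\delta,\tau\bigr)
\]
for an explicit linear form $A$, symmetric matrix $M$, and characteristic $\delta$, where $\tau=(\omega')^{-1}\omega''$ denotes the normalized period matrix. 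The matrix $B$ enters via Proposition \ref{2021.8.8.1}, which is precisely the change of variables from the KP times to the flat Jacobian coordinates; its upper-triangular unit-diagonal shape reflects $\omega_i=t^{w_i-1}(1+O(t))dt$. The shift is forced to be the half-period $\delta$ of Section \ref{2022.3.19.9} by matching the leading Schur term $S_{\mu(A_m)}$ of $\tau$ at the origin with the leading term of $\sigma(Bv)$ in (\ref{4.27.1}), using Lemma \ref{lem-2-2}.

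Second, I would unfold
\[
\sigma(Bv)=C\exp\!\Bigl(\tfrac12\,{}^tv\,B^t\eta'(\omega')^{-1}B\,v\Bigr)\,\theta[\delta]\!\bigl((2\omega')^{-1}Bv,\tau\bigr)
\]
and compare factor by factor against the Krichever formula above. The theta-function parts agree, so the identity reduces to verifying
\[
A=-c,\qquad M-B^t\eta'(\omega')^{-1}B=N,
\]
with $c=(c_{w_1},\dots,c_{w_g})$ from (\ref{2022.3.20.22}) and $N=(q_{w_i,w_j})$ from (\ref{2021.8.6.4}). The linear identification $A=-c$ follows from the geometric interpretation of $c_k$ as Taylor coefficients of the logarithmic derivative of the leading factor of $\omega_g$, which is precisely the Sato normalization constant at the marked point $\infty$.

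The main obstacle is the quadratic identification $M-B^t\eta'(\omega')^{-1}B=N$: one must compare the intrinsic Sato bilinear form $M$ on $W$ (defined by linear algebra on Laurent series) with the geometric expansion coefficients $q_{i,j}$ of the fundamental differential of the second kind $\widehat\omega(P,Q)$ at $\infty\times\infty$, after correcting by the second-kind period matrix $\eta'(\omega')^{-1}$. This is the technical core of \cite[Theorem 1]{Aya2} and \cite[Theorem 8]{N-2010-2}, crucially using the algebraic construction of $\widehat\omega$ via $d_Q\Omega(P,Q)+\sum_i\omega_i(P)\eta_i(Q)$ from Section 2.2 together with the meromorphic expansion of $\eta_i$ at $\infty$. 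Once established, the constant $C$ in the definition of $\sigma$ is uniquely forced by the leading Schur coefficient of $\tau$, and the identity follows.
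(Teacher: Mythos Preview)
The paper does not contain a proof of this statement. Theorem \ref{2021.9.9} is quoted verbatim from \cite[Theorem 1]{Aya2} and \cite[Theorem 8]{N-2010-2}, with no argument supplied here; it functions purely as input to the proof of Theorem \ref{maintheorem}. So there is no ``paper's own proof'' against which to compare your proposal.

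That said, your sketch is a faithful outline of the strategy actually carried out in those references: realize $\tau(v)$ as the tau function of the point of the Sato Grassmannian attached to $(X,\infty,t)$ via the frame $\{t^{g-1}\varphi_j\}$, invoke the Krichever/Segal--Wilson theta representation, and then match it with $\sigma(Bv)$ using the definition of $\sigma$ in terms of $\theta[\delta]$. The identification of the linear correction with $-c$ and of the quadratic correction with $N=(q_{w_i,w_j})$ via the expansion of $\widehat{\omega}$ at $\infty\times\infty$ is exactly what those papers do. One small inaccuracy: the role you attribute to Proposition \ref{2021.8.8.1} (the integrality of the $b_{i,j}$) is really played by the \emph{definition} of $B$ as the matrix of leading expansion coefficients $(\widetilde{b}_{i,w_j})$ of the $\omega_i$; Proposition \ref{2021.8.8.1} is used later, in the Hurwitz-integrality argument, not in establishing the tau--sigma identity itself.
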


\vspace{1ex}

For a subset $\mathfrak{S}$ of ${\boldsymbol \lambda}$, 
we set $\lambda_{j, \mathfrak{S}}^{(i)}=\lambda_j^{(i)}/2$ if $\lambda_j^{(i)}\in \mathfrak{S}$ and we set $\lambda_{j, \mathfrak{S}}^{(i)}=\lambda_j^{(i)}$ 
if $\lambda_j^{(i)}\notin \mathfrak{S}$. 
We denote by ${\boldsymbol \l}_{\mathfrak{S}}$ the set of all $\lambda_{j, \mathfrak{S}}^{(i)}$. 
We set $\deg\lambda_{j, \mathfrak{S}}^{(i)}=\deg\lambda_j^{(i)}$. 

\begin{ex}
We consider the $(2,3)$ curve. 
The polynomial $F_2$ defining the $(2,3)$ curve is given by
\[F_2(X)=X_2^2-X_1^3-\lambda_1X_1X_2-\lambda_2X_1^2-\lambda_3X_2-\lambda_4X_1-\lambda_6.\] 
We have ${\boldsymbol \lambda}=\{\lambda_1, \lambda_2, \lambda_3, \lambda_4, \lambda_6\}$. 
We consider the case of $\mathfrak{S}=\{\lambda_1, \lambda_3\}$. 
Then we have 
\[\lambda_{1,\mathfrak{S}}=\lambda_1/2,\quad \lambda_{2,\mathfrak{S}}=\lambda_2,\quad \lambda_{3,\mathfrak{S}}=\lambda_3/2,\quad \lambda_{4,\mathfrak{S}}=\lambda_4,\quad\lambda_{6,\mathfrak{S}}=\lambda_6\]
and ${\boldsymbol \lambda}_{\mathfrak{S}}=\{\lambda_{1,\mathfrak{S}}, \lambda_{2,\mathfrak{S}}, \lambda_{3,\mathfrak{S}}, \lambda_{4,\mathfrak{S}}, \lambda_{6,\mathfrak{S}}\}$. 
\end{ex}

Let $\mathfrak{A}=\{\lambda_j^{(i)}\;|\;\mbox{$j$ is odd}\}$. 
For a domain $R$, we denote by $R[[t]]$ the set consisting of formal power series over $R$. 

\begin{lem}\label{2021.9.10.33}
We have $c_k\in\mathbb{Z}[{\boldsymbol \l}_{\mathfrak{A}}]$ 
and $c_k$ is homogeneous of degree $k$ with respect to ${\boldsymbol \l}_{\mathfrak{A}}$ if $c_k\neq0$. 
\end{lem}

\begin{proof}
From Proposition \ref{2021.8.8.1}, we have $b_{g,j}\in\mathbb{Z}[{\boldsymbol \l}]$ and $b_{g,j}$ is homogeneous of degree $j$ with respect to ${\boldsymbol \l}$ if $b_{g,j}\neq0$. 
Therefore, if $j$ is odd and $b_{g,j}\neq0$, any term of $b_{g,j}$ contains a coefficient $\lambda_j^{(i)}$ such that $j$ is odd. 
Thus, we have 
\[\frac{1}{2}\frac{d}{dt}\left(1+\sum_{j=1}^{\infty}b_{g,j}t^j\right)\in\mathbb{Z}[{\boldsymbol \l}_{\mathfrak{A}}][[t]]\]
and it is homogeneous of degree $1$ with respect to ${\boldsymbol \l}_{\mathfrak{A}}$ and $t$. 
On the other hand, we have 
\[\frac{1}{1+\sum_{j=1}^{\infty}b_{g,j}t^j}=1+\sum_{\ell=1}^{\infty}\left(-\sum_{j=1}^{\infty}b_{g,j}t^j\right)^{\ell}\in\mathbb{Z}[{\boldsymbol \l}][[t]]\]
and it is homogeneous of degree $0$ with respect to ${\boldsymbol \l}$ and $t$. 
Therefore, we have 
\[\sum_{k=1}^{\infty}c_kt^{k-1}\in\mathbb{Z}[{\boldsymbol \l}_{\mathfrak{A}}][[t]]\]
and it is homogeneous of degree $1$ with respect to ${\boldsymbol \l}_{\mathfrak{A}}$ and $t$. 
Thus, we obtain the statement of the lemma. 
\end{proof}

\begin{thm}\label{maintheorem}
(i) We have $\sigma(u)\in\mathbb{Z}[{\boldsymbol \l}_{\mathfrak{A}}]\langle\langle u \rangle\rangle$. 

\vspace{1ex}

(ii) We have $\sigma(u)^2\in\mathbb{Z}[{\boldsymbol \l}]\langle\langle u \rangle\rangle$. 
\end{thm}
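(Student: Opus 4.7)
The plan is to start from the identity of Theorem \ref{2021.9.9},
\[
\sigma(Bv)=\exp\!\left(cv-\tfrac{1}{2}\,{}^t\!vNv\right)\tau(v),
\]
analyse the right-hand side factor by factor, and finish by the substitution $v=B^{-1}u$. Since $B$ is upper unitriangular with off-diagonal entries $\widetilde{b}_{i,w_j}\in\mathbb{Z}[\boldsymbol{\lambda}]$ by Proposition \ref{2021.8.8.1}, its inverse $B^{-1}$ again has entries in $\mathbb{Z}[\boldsymbol{\lambda}]$; this, together with Lemma \ref{2022.3.20.888}, will transport Hurwitz integrality from $v$ back to $u$ at the end.

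To prove $\sigma(u)\in\mathbb{Z}[\widetilde{\boldsymbol{\lambda}}]\langle\langle u\rangle\rangle$, I would first check that the exponent $P(v):=cv-\tfrac{1}{2}\,{}^t\!vNv$ belongs to $\mathbb{Z}[\widetilde{\boldsymbol{\lambda}}]\langle\langle v\rangle\rangle$ with $P(0)=0$. The linear part $\sum_ic_{w_i}v_i$ is immediate from Lemma \ref{2021.9.10.33}. For the quadratic part $-\tfrac12\sum_{i,j}q_{w_i,w_j}v_iv_j$, a direct matching against the Hurwitz normalisation $v^{\alpha}/\alpha!$ gives Hurwitz coefficient $-q_{w_i,w_i}$ for $v_i^2/2!$ and $-q_{w_i,w_j}$ for $v_iv_j/(1!\,1!)$, both in $\mathbb{Z}[\boldsymbol{\lambda}]$ by Proposition \ref{2022.3.20.1}; the $\tfrac12$ in front is absorbed exactly by the combinatorial $2!$ of $v_i^2$. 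Next I would invoke (or derive in a couple of lines via the recursion $\partial_i\exp(P)=(\partial_iP)\exp(P)$, which propagates integrality of the Hurwitz coefficients by induction on $|\alpha|$) the standard fact that exponentiation preserves Hurwitz integrality for series vanishing at the origin. Combined with $\tau(v)\in\mathbb{Z}[\boldsymbol{\lambda}]\langle\langle v\rangle\rangle$ (Proposition \ref{tauhe}) and with the preservation of Hurwitz integrality under products (immediate from the binomial identity for the Hurwitz convolution), this gives $\sigma(Bv)\in\mathbb{Z}[\widetilde{\boldsymbol{\lambda}}]\langle\langle v\rangle\rangle$, and an application of Lemma \ref{2022.3.20.888} with $M=B^{-1}$ completes the first statement.

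For $\sigma(u)^2\in\mathbb{Z}[\boldsymbol{\lambda}]\langle\langle u\rangle\rangle$, I would square the identity of Theorem \ref{2021.9.9} to obtain
\[
\sigma(Bv)^2=\exp\!\bigl(2cv-{}^t\!vNv\bigr)\,\tau(v)^2,
\]
which removes every $\tfrac12$ from the exponent. The crucial improvement is that $2c_{w_i}\in\mathbb{Z}[\boldsymbol{\lambda}]$: from the definition \eqref{2022.3.20.22}, $2c_k$ is the coefficient of $t^{k-1}$ in the logarithmic derivative $f'(t)/f(t)$ with $f(t)=1+\sum_{j\ge1}b_{g,j}t^j$; by Proposition \ref{2021.8.8.1} we have $f(t),f'(t)\in\mathbb{Z}[\boldsymbol{\lambda}][[t]]$, and since $f(0)=1$ the inverse $1/f(t)$ also lies in $\mathbb{Z}[\boldsymbol{\lambda}][[t]]$, hence so does $f'(t)/f(t)$. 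Therefore the new exponent belongs to $\mathbb{Z}[\boldsymbol{\lambda}]\langle\langle v\rangle\rangle$, and the same exponentiate-multiply-substitute routine (together with $\tau(v)^2\in\mathbb{Z}[\boldsymbol{\lambda}]\langle\langle v\rangle\rangle$) yields the claim.

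The main obstacle, mild but essential, is the first assertion: one must see that the $\tfrac12$ attached to the quadratic form is neutralised by the Hurwitz denominator $2!$ of $v_i^2$, whereas the $\tfrac12$ implicit in $c_{w_i}$ is not—it is instead absorbed only by passing from $\boldsymbol{\lambda}$ to $\widetilde{\boldsymbol{\lambda}}$ via Lemma \ref{2021.9.10.33}. Attempting to work over $\mathbb{Z}[\boldsymbol{\lambda}]$ directly for $\sigma(u)$ would be genuinely obstructed by the linear part of the exponent, and this is precisely what forces the dichotomy between $\sigma(u)$ and $\sigma(u)^2$ in the statement.
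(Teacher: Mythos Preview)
Your proof is correct and follows the same strategy as the paper: invert the identity of Theorem~\ref{2021.9.9}, use that $B^{-1}$ has entries in $\mathbb{Z}[\boldsymbol{\lambda}]$ together with Lemma~\ref{2022.3.20.888}, and control the three factors via Proposition~\ref{tauhe}, Lemma~\ref{2021.9.10.33}, and Proposition~\ref{2022.3.20.1} (and $2c_k\in\mathbb{Z}[\boldsymbol{\lambda}]$ from \eqref{2022.3.20.22} for the square). The only organisational difference is that the paper first substitutes $u=Bv$ and then splits the exponential into one-variable factors, handling the diagonal term $\exp(\overline{q}_{i,i}u_i^2/2)$ via the explicit integrality of $(2n)!/(2^n n!)$, whereas you observe once that the entire exponent lies in $\mathbb{Z}[\widetilde{\boldsymbol{\lambda}}]\langle\langle v\rangle\rangle$ (the $\tfrac12$ on $v_i^2$ being absorbed by the Hurwitz denominator $2!$) and invoke the general closure of Hurwitz series under $\exp$.
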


\begin{proof}
Since the determinant of $B$ is 1, we have 
\[B^{-1}=\widetilde{B},\]
where $\widetilde{B}$ is the adjugate matrix of $B$. 
Therefore, we have $B^{-1}\in M_g(\mathbb{Z}[{\boldsymbol \lambda}])$. 
We set $u=Bv$ in Theorem \ref{2021.9.9}. Then we have 
\begin{equation}
\sigma(u)=\exp\left(cB^{-1}u-\frac{1}{2}{}^tu\;{}^t(B^{-1})NB^{-1}u\right)\tau(B^{-1}u).\label{2021..9.9.1}
\end{equation}
From Lemma \ref{2022.3.20.888} and Proposition \ref{tauhe}, we have $\tau(B^{-1}u)\in\mathbb{Z}[{\boldsymbol \l}]\langle\langle u \rangle\rangle$. 
Let $\widetilde{c}=cB^{-1}$, $\widetilde{c}=(\widetilde{c}_1,\dots,\widetilde{c}_g)$, $\widetilde{N}={}^t(B^{-1})NB^{-1}$, and $\widetilde{N}=(\widetilde{q}_{i,j})_{1\le i,j\le g}$. 
From Lemma \ref{2021.9.10.33}, we have $\widetilde{c}_i\in\mathbb{Z}[{\boldsymbol \l}_{\mathfrak{A}}]$ for any $i$. 
Since $N$ is a symmetric matrix, $\widetilde{N}$ is also a symmetric matrix. 
From Proposition \ref{2022.3.20.1}, we have $\widetilde{q}_{i,j}\in\mathbb{Z}[{\boldsymbol \l}]$ for any $i,j$. 
Thus, we have $\exp(\widetilde{c}_iu_i)\in\mathbb{Z}[{\boldsymbol \l}_{\mathfrak{A}}]\langle\langle u_i \rangle\rangle$ and $\exp(\widetilde{q}_{i,j}u_iu_j)\in\mathbb{Z}[{\boldsymbol \l}]\langle\langle u_i, u_j\rangle\rangle$ for any $i,j$. 
For any non-negative integer $n$, we have 
\[\frac{(2n)!}{2^nn!}\in\mathbb{Z}\]
(cf. \cite[Lemma 11]{AB2020}). Therefore, we have $\exp(\widetilde{q}_{i,i}u_{i}^2/2)\in\mathbb{Z}[{\boldsymbol \l}]\langle\langle u_{i} \rangle\rangle$ for any $i$. 
Thus, from (\ref{2021..9.9.1}), we obtain the statement of (i).   
From (\ref{2021..9.9.1}), we have 
\[\sigma(u)^2=\exp\left(2cB^{-1}u-{}^tu\;{}^t(B^{-1})NB^{-1}u\right)\tau(B^{-1}u)^2.\]
From (\ref{2022.3.20.22}), we have $2c_i\in\mathbb{Z}[{\boldsymbol \l}]$ for any $i$. 
Therefore, we obtain the statement of (ii).  
\end{proof}

\begin{lem}\label{2022.5.31.111}
For $(k_1,\dots,k_m)\in2{\mathbb Z}_{\geq 0}^m$, we define $\mathfrak{c}_n$ by 
\begin{equation}
\sum_{n=0}^{\infty}\mathfrak{c}_nt^n=\left(\sum_{k=0}^{\infty}p_{1,k}t^k\right)^{k_1}\cdots\left(\sum_{k=0}^{\infty}p_{m,k}t^k\right)^{k_m},\label{2022.5.31.1}
\end{equation}
where $p_{i,k}\in\mathbb{Z}[{\boldsymbol \l}]$ is defined in (\ref{xi}). 
If $n$ is odd, then we have $\mathfrak{c}_n\in2\mathbb{Z}[{\boldsymbol \l}]$. 
\end{lem}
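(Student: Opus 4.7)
The plan is to reduce the statement to the elementary observation that squaring a power series over $\mathbb{Z}[\boldsymbol{\lambda}]$ automatically puts every odd-degree coefficient into $2\mathbb{Z}[\boldsymbol{\lambda}]$. First, I would introduce the subset
\[
R_2=\left\{\sum_{n\ge0}a_nt^n\in\mathbb{Z}[\boldsymbol{\lambda}][[t]]\;\middle|\;a_n\in 2\mathbb{Z}[\boldsymbol{\lambda}]\text{ for every odd }n\right\}.
\]
Equivalently, $R_2$ is the preimage of $\mathbb{F}_2[\boldsymbol{\lambda}][[t^2]]$ under the reduction-mod-$2$ homomorphism $\mathbb{Z}[\boldsymbol{\lambda}][[t]]\to\mathbb{F}_2[\boldsymbol{\lambda}][[t]]$. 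Since $\mathbb{F}_2[\boldsymbol{\lambda}][[t^2]]$ is a subring of $\mathbb{F}_2[\boldsymbol{\lambda}][[t]]$, this presentation immediately shows that $R_2$ is a subring of $\mathbb{Z}[\boldsymbol{\lambda}][[t]]$.

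Next, I would verify that $f(t)^2\in R_2$ for every $f(t)=\sum_ka_kt^k\in\mathbb{Z}[\boldsymbol{\lambda}][[t]]$. The coefficient of $t^n$ in $f(t)^2$ is $\sum_{i+j=n}a_ia_j$, and for odd $n$ the indices $i,j$ can never coincide, so the summands pair up as $a_ia_j+a_ja_i=2a_ia_j$, giving a coefficient in $2\mathbb{Z}[\boldsymbol{\lambda}]$. By Proposition \ref{prop12}, for each $1\le i\le m$ the series $f_i(t):=\sum_{k\ge0}p_{i,k}t^k$ belongs to $\mathbb{Z}[\boldsymbol{\lambda}][[t]]$, so $f_i(t)^2\in R_2$. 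Since each exponent $k_i$ is even, write $k_i=2\ell_i$ and observe that $f_i(t)^{k_i}=\bigl(f_i(t)^2\bigr)^{\ell_i}\in R_2$ because $R_2$ is a ring (the case $k_i=0$ being trivial).

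Finally, since the product of elements of the subring $R_2$ remains in $R_2$, the right-hand side of \eqref{2022.5.31.1} lies in $R_2$, which is precisely the statement that $\widetilde{p}_n\in 2\mathbb{Z}[\boldsymbol{\lambda}]$ for every odd $n$. There is no real obstacle here; the only point that needs a moment of care is the clean formulation of $R_2$ as a subring (via the Frobenius-style description $R_2=\pi^{-1}(\mathbb{F}_2[\boldsymbol{\lambda}][[t^2]])$), which is what lets us multiply together the individual even powers $f_i(t)^{k_i}$ without tracking their coefficients term by term.
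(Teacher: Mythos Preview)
Your argument is correct. Both proofs are short and elementary, but they take genuinely different routes. The paper differentiates \eqref{2022.5.31.1} with respect to $t$: by the product rule, $\frac{d}{dt}\prod_i f_i(t)^{k_i}=\sum_i k_i f_i'(t)f_i(t)^{k_i-1}\prod_{j\ne i}f_j(t)^{k_j}$, and since each $k_i$ is even this lies in $2\mathbb{Z}[\boldsymbol{\lambda}][[t]]$; comparing with $\sum_{n\ge1}n\widetilde{p}_nt^{n-1}$ gives $n\widetilde{p}_n\in 2\mathbb{Z}[\boldsymbol{\lambda}]$ for all $n$, and for odd $n$ one cancels the unit $n$ modulo $2$. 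Your approach instead packages the target condition as membership in the subring $R_2=\pi^{-1}(\mathbb{F}_2[\boldsymbol{\lambda}][[t^2]])$ and observes that squares land in $R_2$ (equivalently, the Frobenius in characteristic $2$ sends any series to a series in $t^2$), so each even power $f_i^{k_i}=(f_i^2)^{k_i/2}$ and their product lie in $R_2$. The paper's version is a one-line computation; your version is slightly more structural and has the minor advantage that it never needs the cancellation step ``$n$ odd $\Rightarrow$ $n$ invertible mod $2$'', making it transparently valid over any coefficient ring in place of $\mathbb{Z}[\boldsymbol{\lambda}]$.
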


\begin{proof}
We differentiate the both sides of (\ref{2022.5.31.1}) with respect to $t$. 
Since $k_1,\dots,k_m$ are even non-negative integers, we have $n\mathfrak{c}_n\in2\mathbb{Z}[{\boldsymbol \l}]$ for any $n\ge1$. 
Therefore, if $n$ is odd, then we have $\mathfrak{c}_n\in2\mathbb{Z}[{\boldsymbol \l}]$. 
\end{proof}

\begin{lem}\label{2023.12.1.113}
For $(k_1,\dots,k_m), (\ell_1,\dots,\ell_m)\in{\mathbb Z}_{\geq 0}^m$ such that $\sum_{i=1}^ma_ik_i=\sum_{i=1}^ma_i\ell_i$, we have 
\begin{equation}
x_1^{k_1}\cdots x_m^{k_m}=x_1^{\ell_1}\cdots x_m^{\ell_m}+\sum\mathfrak{d}_{i_1,\dots,i_m}x_1^{i_1}\cdots x_m^{i_m},\label{2023.11.22.1}
\end{equation}
where the summation is taken over 
$(i_1,\dots, i_m)\in {\mathbb Z}_{\geq 0}^m$ such that $\sum_{j=1}^m a_ji_j<\sum_{j=1}^m a_jk_j$, $\mathfrak{d}_{i_1,\dots,i_m}\in\mathbb{Z}[{\boldsymbol \l}]$, and the right hand side of (\ref{2023.11.22.1}) is homogeneous of degree $\sum_{j=1}^ma_jk_j$ with respect to ${\boldsymbol \l}$ and $x_1,\dots,x_m$. 
\end{lem}

\begin{proof}
From Lemma \ref{2022.6.28.222}, we obtain the statement of the lemma. 
\end{proof}

When we consider a polynomial $f\in\mathbb{C}[X_1,\dots,X_m]$, we combine like terms in $f$. 
%For $h=\sum\mathfrak{c}_{i_1,\dots.i_n}z_1^{i_1}\cdots z_n^{i_n}\in\mathbb{Z}[z_1,\dots,z_n]$, we denote by $\mbox{term}(h)$ the set of $z_1^{i_1}\cdots z_n^{i_n}$ appearing in $h$. 
%For example, for $h=z_1+2z_1z_2+3z_1z_2^2-2z_1^2z_2$, we have $\mbox{term}(h)=\{z_1, z_1z_2, z_1z_2^2, z_1^2z_2\}$. 
%For a polynomial $f\in\mathbb{Z}[{\boldsymbol \l}][X_1,\dots,X_m]$, we assume a term $\mathfrak{e}_{k_1,\dots,k_m}X_1^{k_1}\cdots X_m^{k_m}$ with $\mathfrak{e}_{k_1,\dots,k_m}\in\mathbb{Z}[{\boldsymbol \l}]$ appears in $f$. 
%For $(k_1,\dots,k_m), (\ell_1,\dots,\ell_m)\in{\mathbb Z}_{\geq 0}^m$ such that $\sum_{i=1}^ma_ik_i=\sum_{i=1}^ma_i\ell_i$, we take an expression in the form of (\ref{2023.11.22.1}). 
For a polynomial $f\in\mathbb{Z}[{\boldsymbol \l}][X_1,\dots,X_m]$, we consider the following two operations: 

\begin{enumerate}
\renewcommand{\labelenumi}{(\Alph{enumi}).}

\item When a term $\mathfrak{e}_1X_1^{k_1}\cdots X_m^{k_m}$ such that $\mathfrak{e}_1\in\mathbb{Z}[{\boldsymbol \l}]$ and $(k_1,\dots,k_m)\in{\mathbb Z}_{\geq 0}^m\backslash 2{\mathbb Z}_{\geq 0}^m$ appears in $f$, 
we replace $X_1^{k_1}\cdots X_m^{k_m}$ with $X_1^{\ell_1}\cdots X_m^{\ell_m}+\cdots$ in this term. 

\item When a term $\mathfrak{e}_2(X_1^{k_1}\cdots X_m^{k_m})^2$ such that $\mathfrak{e}_2\in\mathbb{Z}[{\boldsymbol \l}]$ and $(k_1,\dots,k_m)\in{\mathbb Z}_{\geq 0}^m$ appears in $f$, we replace $(X_1^{k_1}\cdots X_m^{k_m})^2$ with $(X_1^{\ell_1}\cdots X_m^{\ell_m}+\cdots)^2$ in this term. 
\end{enumerate}

In the operations (A) and (B), $X_1^{\ell_1}\cdots X_m^{\ell_m}+\cdots$ denotes the polynomial in $\mathbb{Z}[{\boldsymbol \l}][X_1,\dots,X_m]$ obtained by replacing $x_i$ of the right hand side of (\ref{2023.11.22.1}) with $X_i$ for any $1\le i\le m$.  

\begin{ex}
We consider the $(2,3)$ curve. 
We have 
\[x_1^3=x_2^2-\lambda_1x_1x_2-\lambda_2x_1^2-\lambda_3x_2-\lambda_4x_1-\lambda_6.\]
First, we consider the case of 
\[f=\lambda_1X_1^3+\lambda_1\lambda_3X_2.\]
By applying the operation (A) to $X_1^3$, the polynomial $f$ transforms into 
\begin{align*}
&\lambda_1\left(X_2^2-\lambda_1X_1X_2-\lambda_2X_1^2-\lambda_3X_2-\lambda_4X_1-\lambda_6\right)+\lambda_1\lambda_3X_2\\
&=\lambda_1X_2^2-\lambda_1^2X_1X_2-\lambda_1\lambda_2X_1^2-\lambda_1\lambda_4X_1-\lambda_1\lambda_6.
\end{align*}
Next, we consider the case of 
\[f=\lambda_1X_1^6-2\lambda_1\lambda_3\lambda_6X_2-2\lambda_1\lambda_4\lambda_6X_1-\lambda_1\lambda_6^2.\]
By applying the operation (B) to $X_1^6$, the polynomial $f$ transforms into 
\begin{align*}
&\lambda_1\left(X_2^2-\lambda_1X_1X_2-\lambda_2X_1^2-\lambda_3X_2-\lambda_4X_1-\lambda_6\right)^2-2\lambda_1\lambda_3\lambda_6X_2-2\lambda_1\lambda_4\lambda_6X_1-\lambda_1\lambda_6^2\\
&=\lambda_1X_2^4-2\lambda_1^2X_1X_2^3+\lambda_1(\lambda_1^2-2\lambda_2)X_1^2X_2^2+2\lambda_1^2\lambda_2X_1^3X_2-2\lambda_1\lambda_3X_2^3\\
&+2\lambda_1(\lambda_1\lambda_3-\lambda_4)X_1X_2^2+\lambda_1\lambda_2^2X_1^4+2\lambda_1(\lambda_1\lambda_4+\lambda_2\lambda_3)X_1^2X_2+\lambda_1(\lambda_3^2-2\lambda_6)X_2^2\\
&+2\lambda_1\lambda_2\lambda_4X_1^3+2\lambda_1(\lambda_1\lambda_6+\lambda_3\lambda_4)X_1X_2+\lambda_1(\lambda_4^2+2\lambda_2\lambda_6)X_1^2. 
\end{align*}

\end{ex}

For a subring $R$ of $\mathbb{C}$, let $\mathcal{P}(R)$ be the set of $\sum \mathfrak{f}_{i_1,\dots,i_m}X_1^{i_1}\cdots X_m^{i_m}\in R[X_1,\dots,X_m]$ 
such that $\mathfrak{f}_{i_1,\dots,i_m}\in 2R$ or $(i_1,\dots,i_m)\in2{\mathbb Z}_{\geq 0}^m$.  
For $f\in\mathbb{Z}[{\boldsymbol \l}][X_1,\dots,X_m]$, 
let $\mathcal{S}(f)$ be the set of the subsets $\mathfrak{S}$ of ${\boldsymbol \l}$ such that 
$f\in\mathcal{P}(\mathbb{Z}[{\boldsymbol \l}_{\mathfrak{S}}])$. 
If $\mathfrak{S}_0\in\mathcal{S}(f)$, then we have $\mathfrak{S}\in\mathcal{S}(f)$ for any subset $\mathfrak{S}$ of ${\boldsymbol \l}$ such that $\mathfrak{S}_0\subseteq\mathfrak{S}$. 

\begin{ex}
We consider the $(2,3)$ curve. 
First, we consider the case of 
\[f=X_1^2X_2^2+2X_1^5+\lambda_1X_2^3+\lambda_4X_1^3+2\lambda_3X_1^2X_2+\lambda_6X_1^2+\lambda_2^4X_1.\]
The set $\mathcal{S}(f)$ is the set of the subsets of ${\boldsymbol \l}$ which contain $\lambda_1, \lambda_2, \lambda_4$. 
We have $\mathcal{S}(f)=\{\mathfrak{S}_1, \mathfrak{S}_2, \mathfrak{S}_3, \mathfrak{S}_4\}$, where 
\begin{align*}
\mathfrak{S}_1&=\{\lambda_1, \lambda_2, \lambda_4\},\qquad \mathfrak{S}_2=\{\lambda_1, \lambda_2, \lambda_3, \lambda_4\},\\
\mathfrak{S}_3&=\{\lambda_1, \lambda_2, \lambda_4, \lambda_6\},\qquad \mathfrak{S}_4=\{\lambda_1, \lambda_2, \lambda_3, \lambda_4, \lambda_6\}. 
\end{align*}
Next, we consider the case of 
\[f=X_1^5+\lambda_1X_2^3+\lambda_4X_1^3+2\lambda_3X_1^2X_2+\lambda_6X_1^2+\lambda_2^4X_1.\]
We have $\mathcal{S}(f)=\emptyset$. 
\end{ex}

For $f_1, f_2\in\mathbb{Z}[{\boldsymbol \l}][X_1,\dots,X_m]$ and $\mathfrak{S}\subseteq{\boldsymbol \l}$, if  $\mathfrak{S}\in\mathcal{S}(f_1)\cap\mathcal{S}(f_2)$, then we have 
$\mathfrak{S}\in\mathcal{S}(f_1+f_2)$. 

\begin{lem}\label{2023.12.31.1111}
Let $f_1$ and $f_2$ be polynomials in $\mathbb{Z}[{\boldsymbol \l}][X_1,\dots,X_m]$ such that $f_1-f_2\in\mathcal{P}(\mathbb{Z}[{\boldsymbol \l}])$. 
Then we have $\mathcal{S}(f_1)=\mathcal{S}(f_2)$. 
\end{lem}

\begin{proof}
There is a polynomial $f_3\in\mathcal{P}(\mathbb{Z}[{\boldsymbol \l}])$ such that $f_1=f_2+f_3$. 
We assume $\mathfrak{S}\in \mathcal{S}(f_2)$. 
From $\mathfrak{S}\in \mathcal{S}(f_3)$, we have $\mathfrak{S}\in \mathcal{S}(f_1)$. 
Thus, we have $\mathcal{S}(f_2)\subseteq\mathcal{S}(f_1)$. 
In the same way, we have $\mathcal{S}(f_1)\subseteq\mathcal{S}(f_2)$. 
Therefore, we have $\mathcal{S}(f_1)=\mathcal{S}(f_2)$. 
\end{proof}

For $f_1, f_2\in\mathbb{C}[X_1,\dots,X_m]$, if $f_1-f_2\in I$, we say that $f_1$ is congruent to $f_2$ modulo $I$, where $I$ is defined in Section \ref{2023.11.26.752}. 

\begin{lem}\label{2023.11.23.456}
For $f\in\mathbb{Z}[{\boldsymbol \l}][X_1,\dots,X_m]$, 
let $f_A$ and $f_B$ be polynomials obtained by applying the operations (A) and (B) to $f$, respectively. 
Then $f$ is congruent to $f_A$ and $f_B$ modulo $I$ and we have $\mathcal{S}(f)\subseteq\mathcal{S}(f_A)$ and $\mathcal{S}(f)=\mathcal{S}(f_B)$. 
\end{lem}

\begin{proof}
From Lemma \ref{2023.12.1.113}, $f$ is congruent to $f_A$ and $f_B$ modulo $I$. 
Let $f=\sum\mathfrak{g}_{i_1,\dots,i_m}X_1^{i_1}\cdots X_m^{i_m}$. 
%If $\mathcal{S}(f)=\emptyset$, then we have $\mathcal{S}(f)\subseteq\mathcal{S}(f_A)$. 
%We assume that $\mathcal{S}(f)\neq\emptyset$. 
We assume $\mathfrak{S}\in\mathcal{S}(f)$. 
For any term $\mathfrak{g}_{i_1,\dots,i_m}X_1^{i_1}\cdots X_m^{i_m}$ in $f$ such that $(i_1,\dots,i_m)\notin2{\mathbb Z}_{\geq 0}^m$, we have 
$\mathfrak{g}_{i_1,\dots,i_m}\in 2\mathbb{Z}[{\boldsymbol \l}_{\mathfrak{S}}]$. 
Thus, we have $\mathfrak{S}\in\mathcal{S}(f_A)$. 
Therefore, we have $\mathcal{S}(f)\subseteq\mathcal{S}(f_A)$. 
In the operation (B), we have 
\[(X_1^{\ell_1}\cdots X_m^{\ell_m}+\cdots)^2\in \mathcal{P}(\mathbb{Z}[{\boldsymbol \l}]).\] 
%Thus, for any $\mathfrak{S}\in\mathcal{S}(f)$, we have $\mathfrak{S}\in\mathcal{S}(f_B)$. 
%Therefore, we have $\mathcal{S}(f)\subseteq\mathcal{S}(f_B)$. 
%On the other hand, by the operation (B), we can transform $f_B$ to $f$. 
%Thus, we have $\mathcal{S}(f_B)\subseteq\mathcal{S}(f)$. 
From Lemma \ref{2023.12.31.1111}, we have $\mathcal{S}(f)=\mathcal{S}(f_B)$. 
\end{proof}

\begin{lem}\label{2023.10.22.2222}
Let $k$ be an integer such that $1\le k\le m$ and $a_k$ is odd. 
Let $f$ be a polynomial in $\mathbb{Z}[{\boldsymbol \l}][X_1,\dots,X_m]$ such that $f$ is congruent to $\det G_k$ modulo $I$. %, and $\mathfrak{S}$ be a subset of ${\boldsymbol \lambda}$.  
If $\mathfrak{S}\in\mathcal{S}(f)$, then we have $\sigma(u)\in\mathbb{Z}[{\boldsymbol \l}_{\mathfrak{S}}]\langle\langle u \rangle\rangle$. 
\end{lem}

\begin{proof}
From Propositions \ref{prop11}, \ref{prop12}, and Lemma \ref{gkp}, for $P=(x_1,\dots,x_m)\in V$, $\det G_k(P)$ can be expanded around $\infty$ with respect to $t$ as follows: 
\[\det G_k(P)=\frac{(-1)^{k+1}a_k}{t^{2g-1+a_k}}\left(1+\sum_{n=1}^{\infty}\mathfrak{h}_nt^n\right),\]
where $\mathfrak{h}_n\in\mathbb{Z}[1/a_k, {\boldsymbol \l}_{\mathfrak{S}}]$ for any $n\ge1$. 
We have $\det G_k(P)=f(P)$. 
From $f\in\mathcal{P}(\mathbb{Z}[{\boldsymbol \l}_{\mathfrak{S}}])$ and Lemma \ref{2022.5.31.111}, if $n$ is odd, then we have $\mathfrak{h}_n\in2\mathbb{Z}[1/a_k, {\boldsymbol \l}_{\mathfrak{S}}]$. 
From 
\[\frac{1}{\det G_k(P)}=\frac{t^{2g-1+a_k}}{(-1)^{k+1}a_k}\left\{1+\sum_{i=1}^{\infty}\left(-\sum_{n=1}^{\infty}\mathfrak{h}_nt^n\right)^i\right\},\]
we have the expansion 
\begin{equation}
\frac{1}{\det G_k(P)}=\frac{t^{2g-1+a_k}}{(-1)^{k+1}a_k}\left(1+\sum_{n=1}^{\infty}\mathfrak{i}_nt^n\right),\label{2022.6.2.1}
\end{equation}
where $\mathfrak{i}_n\in\mathbb{Z}[1/a_k, {\boldsymbol \l}_{\mathfrak{S}}]$ for any $n\ge1$. 
Further, if $n$ is odd, then we have $\mathfrak{i}_n\in2\mathbb{Z}[1/a_k, {\boldsymbol \l}_{\mathfrak{S}}]$. 
On the other hand, we have the expansion around $\infty$ with respect to $t$
\begin{equation}
dx_k=\frac{-a_k}{t^{a_k+1}}\left(1+\sum_{n=1}^{\infty}\mathfrak{j}_nt^n\right),\label{2022.6.2.2}
\end{equation}
where $\mathfrak{j}_n\in\mathbb{Z}[1/a_k, {\boldsymbol \l}_{\mathfrak{S}}]$ for any $n\ge1$. 
Further, if $n$ is odd, then we have $\mathfrak{j}_n\in2\mathbb{Z}[1/a_k, {\boldsymbol \l}_{\mathfrak{S}}]$. 
From Proposition \ref{2021.8.8.1}, we have 
$b_{g,j}\in\mathbb{Z}[{\boldsymbol \l}_{\mathfrak{S}}]$ for any $j\ge1$. 
From (\ref{2022.6.2.3}), (\ref{2022.6.2.1}), and (\ref{2022.6.2.2}), if $j$ is odd, then we have $b_{g,j}\in2\mathbb{Z}[{\boldsymbol \l}_{\mathfrak{S}}]$. 
Thus, we have 
\[\frac{1}{2}\frac{d}{dt}\left(1+\sum_{j=1}^{\infty}b_{g,j}t^j\right)\in\mathbb{Z}[{\boldsymbol \l}_{\mathfrak{S}}][[t]].\]
As in the case of Lemma \ref{2021.9.10.33}, we have $c_k\in\mathbb{Z}[{\boldsymbol \l}_{\mathfrak{S}}]$ for any $k\ge1$. 
Therefore, as in the case of Theorem \ref{maintheorem} (i), we obtain the statement of the lemma.  
\end{proof}

In Lemma \ref{2023.10.22.2222}, we have the following problems: 

\begin{itemize}
\item For any telescopic curve and any integer $k$ such that $1\le k\le m$ and $a_k$ is odd, is there a polynomial $f$ in $\mathbb{Z}[{\boldsymbol \l}][X_1,\dots,X_m]$ such that $f$ is congruent to $\det G_k$ modulo $I$ and $\mathcal{S}(f)\neq\emptyset$?

\item Find a polynomial $\mathscr{F}$ in $\mathbb{Z}[{\boldsymbol \l}][X_1,\dots,X_m]$ such that $\mathscr{F}$ gives the best result among all the polynomials in $\mathbb{Z}[{\boldsymbol \l}][X_1,\dots,X_m]$ which are congruent to $\det G_k$ modulo $I$. 

\end{itemize}

We solve these problems in Theorem \ref{2023.11.26.14598}. 

\begin{thm}\label{2023.11.26.14598}
Let $k$ be an integer such that $1\le k\le m$ and $a_k$ is odd. 

\vspace{1ex}

\noindent (i) By applying the operations (A) and (B), we can transform $\det G_k$ into 
\begin{equation}
\mathscr{F}=(-1)^{k+1}a_kX_1^{2h_1}\cdots X_m^{2h_m}+\sum \mathfrak{p}_{i_1,\dots,i_m}X_1^{i_1}\cdots X_m^{i_m},\label{2023.12.3.1}
\end{equation}
where
\begin{itemize}

\item $(h_1,\dots,h_m)\in {\mathbb Z}_{\geq 0}^m$ such that $2\sum_{j=1}^ma_jh_j=2g-1+a_k$,

\item the summation in (\ref{2023.12.3.1}) is taken over $(i_1,\dots,i_m)\in {\mathbb Z}_{\geq 0}^m$ such that $\sum_{j=1}^ma_ji_j<2g-1+a_k$, 

\item $\mathfrak{p}_{i_1,\dots,i_m}\in\mathbb{Z}[{\boldsymbol \l}]$, 

\item $\mathscr{F}$ is homogeneous of degree $2g-1+a_k$ with respect to ${\boldsymbol \l}$ and $X_1,\dots,X_m$, 

\item if $(i_1,\dots,i_m)\neq(i_1',\dots,i_m')$ and $\mathfrak{p}_{i_1,\dots,i_m}\mathfrak{p}_{i_1',\dots,i_m'}\neq0$, then $\sum_{j=1}^ma_ji_j\neq\sum_{j=1}^ma_ji_j'$, 

\item if $(i_1,\dots,i_m)\notin2{\mathbb Z}_{\geq 0}^m$ and $\mathfrak{p}_{i_1,\dots,i_m}\neq0$, then $\sum_{j=1}^ma_ji_j\notin 2\langle A_m\rangle$. 
\end{itemize}

\vspace{1ex}

\noindent (ii) We have $\mathcal{S}(\mathscr{F})\neq\emptyset$. For any $\mathfrak{S}\in \mathcal{S}(\mathscr{F})$, we have $\sigma(u)\in\mathbb{Z}[{\boldsymbol \l}_{\mathfrak{S}}]\langle\langle u \rangle\rangle$.

\vspace{1ex}

\noindent (iii) For any $f\in\mathbb{Z}[{\boldsymbol \l}][X_1,\dots,X_m]$ which is congruent to $\det G_k$ modulo $I$, we have $\mathcal{S}(f)\subseteq\mathcal{S}(\mathscr{F})$. 
\end{thm}

\begin{proof}
(i) We have $\det G_k\in\mathbb{Z}[{\boldsymbol \l}][X_1,\dots,X_m]$ and $\det G_k$ is homogeneous of degree $2g-1+a_k$ with respect to ${\boldsymbol \l}$ and $X_1,\dots,X_m$. 
From Lemma \ref{gkp}, the maximum of the degree of $X_1^{i_1}\cdots X_m^{i_m}$ appearing in $\det G_k$ is $2g-1+a_k$. 
From Lemma \ref{2023.10.26.1}, we have $2g-1+a_k\in2\langle A_m\rangle$. 
From Lemma \ref{gkp}, by applying the operations (A) and (B), we can transform $\det G_k$ into 
\[(-1)^{k+1}a_kX_1^{2h_1}\cdots X_m^{2h_m}+\mathscr{G},\]
where $(h_1,\dots,h_m)\in {\mathbb Z}_{\geq 0}^m$ such that $2\sum_{j=1}^ma_jh_j=2g-1+a_k$, $\mathscr{G}\in\mathbb{Z}[{\boldsymbol \l}][X_1,\dots,X_m]$, and the maximum of the degree of $X_1^{i_1}\cdots X_m^{i_m}$ appearing in $\mathscr{G}$ is less than $2g-1+a_k$. 
Let $\mathfrak{k}$ be the maximum of the degree of $X_1^{i_1}\cdots X_m^{i_m}$ appearing in $\mathscr{G}$. 
\begin{itemize}
\item If $\mathfrak{k}\in2\langle A_m\rangle$, we take a sequence $(h_1^{(1)},\dots,h_m^{(1)})\in {\mathbb Z}_{\geq 0}^m$ such that $2\sum_{j=1}^ma_jh_j^{(1)}=\mathfrak{k}$. 
By applying the operations (A) and (B), we replace all $X_1^{i_1}\cdots X_m^{i_m}$ in $\mathscr{G}$ such that $\sum_{j=1}^ma_ji_j=\mathfrak{k}$ 
with $X_1^{2h_1^{(1)}}\cdots X_m^{2h_m^{(1)}}+\cdots$. 

\item If $\mathfrak{k}\notin2\langle A_m\rangle$, we take a sequence $(h_1^{(2)},\dots,h_m^{(2)})\in {\mathbb Z}_{\geq 0}^m$ such that $\sum_{j=1}^ma_jh_j^{(2)}=\mathfrak{k}$. 
By applying the operation (A), we replace all $X_1^{i_1}\cdots X_m^{i_m}$ in $\mathscr{G}$ such that $\sum_{j=1}^ma_ji_j=\mathfrak{k}$ with $X_1^{h_1^{(2)}}\cdots X_m^{h_m^{(2)}}+\cdots$. 
\end{itemize}
By repeating the above operations, we obtain the statement of (i). 

\vspace{1ex}

(ii) Since any $\mathfrak{p}_{i_1,\dots,i_m}$ sucu that $\mathfrak{p}_{i_1,\dots,i_m}\neq0$ contains a coefficient $\lambda_j^{(i)}$ in ${\boldsymbol \lambda}$, we have $\mathcal{S}(\mathscr{F})\neq\emptyset$. 
From Lemma \ref{2023.11.23.456}, $\mathscr{F}$ is congruent to $\det G_k$ modulo $I$. 
From Lemma \ref{2023.10.22.2222}, for any $\mathfrak{S}\in \mathcal{S}(\mathscr{F})$, we have $\sigma(u)\in\mathbb{Z}[{\boldsymbol \l}_{\mathfrak{S}}]\langle\langle u \rangle\rangle$. 

\vspace{1ex}

(iii) Let $f$ be a polynomial in $\mathbb{Z}[{\boldsymbol \l}][X_1,\dots,X_m]$ which is congruent to $\det G_k$ modulo $I$. 
Let $\mathfrak{l}$ be the maximum of the degree of $X_1^{i_1}\cdots X_m^{i_m}$ appearing in $f$. 
Then we have $\mathfrak{l}\ge2g-1+a_k$. 
We consider the case of $\mathfrak{l}>2g-1+a_k$. 
\begin{itemize}
\item If $\mathfrak{l}\in2\langle A_m\rangle$, we take a sequence $(h_1^{(3)},\dots,h_m^{(3)})\in {\mathbb Z}_{\geq 0}^m$ such that $2\sum_{j=1}^ma_jh_j^{(3)}=\mathfrak{l}$. 
By applying the operations (A) and (B), we replace all $X_1^{i_1}\cdots X_m^{i_m}$ in $f$ such that $\sum_{j=1}^ma_ji_j=\mathfrak{l}$ 
with $X_1^{2h_1^{(3)}}\cdots X_m^{2h_m^{(3)}}+\cdots$. 
\item If $\mathfrak{l}\notin2\langle A_m\rangle$, we take a sequence $(h_1^{(4)},\dots,h_m^{(4)})\in {\mathbb Z}_{\geq 0}^m$ such that $\sum_{j=1}^ma_jh_j^{(4)}=\mathfrak{l}$. 
By applying the operation (A), we replace all  
$X_1^{i_1}\cdots X_m^{i_m}$ in $f$ such that $\sum_{j=1}^ma_ji_j=\mathfrak{l}$ with $X_1^{h_1^{(4)}}\cdots X_m^{h_m^{(4)}}+\cdots$. 
\end{itemize}
%Then we can transform $f$ into a polynomial in $\mathbb{Z}[{\boldsymbol \l}][X_1,\dots,X_m]$, where the maximum of the degree of $X_1^{i_1}\cdots X_m^{i_m}$ appearing in this polynomial is $2g-1+a_k$ or more and less than $\mathfrak{l}$. 
By repeating the above operations, we can transform $f$ into a polynomial $\mathscr{H}$ in $\mathbb{Z}[{\boldsymbol \l}][X_1,\dots,X_m]$, where the maximum of the degree of $X_1^{i_1}\cdots X_m^{i_m}$ appearing in $\mathscr{H}$ is $2g-1+a_k$. 
By applying the operations (A) and (B), we can transform $\mathscr{H}$ or $f$ in the case of $\mathfrak{l}=2g-1+a_k$ into 
\[(-1)^{k+1}a_kX_1^{2h_1}\cdots X_m^{2h_m}+\mathscr{I},\]
where $(h_1,\dots,h_m)$ is defined in (\ref{2023.12.3.1}), $\mathscr{I}\in\mathbb{Z}[{\boldsymbol \l}][X_1,\dots,X_m]$, and the maximum of the degree of $X_1^{i_1}\cdots X_m^{i_m}$ appearing in $\mathscr{I}$ is less than $2g-1+a_k$. 
Let $\mathfrak{m}$ be the maximum of the degree of $X_1^{i_1}\cdots X_m^{i_m}$ appearing in $\mathscr{I}$. 
\begin{itemize}
\item If we have the term $\mathfrak{p}_{k_1,\dots,k_m}X_1^{k_1}\cdots X_m^{k_m}$ such that $\mathfrak{p}_{k_1,\dots,k_m}\neq0$ and $\sum_{j=1}^ma_jk_j=\mathfrak{m}$ in (\ref{2023.12.3.1}), 
by applying the operations (A) and (B), we replace all $X_1^{i_1}\cdots X_m^{i_m}$ in $\mathscr{I}$ such that $\sum_{j=1}^ma_ji_j=\mathfrak{m}$ 
with $X_1^{k_1}\cdots X_m^{k_m}+\cdots$. 

\item If $\mathfrak{m}\in2\langle A_m\rangle$ and there is no term $\mathfrak{p}_{k_1,\dots,k_m}X_1^{k_1}\cdots X_m^{k_m}$ such that $\mathfrak{p}_{k_1,\dots,k_m}\neq0$ and $\sum_{j=1}^ma_jk_j=\mathfrak{m}$ in (\ref{2023.12.3.1}), 
we take a sequence $(h_1^{(5)},\dots,h_m^{(5)})\in {\mathbb Z}_{\geq 0}^m$ such that $2\sum_{j=1}^ma_jh_j^{(5)}=\mathfrak{m}$. 
By applying the operations (A) and (B), we replace all  
$X_1^{i_1}\cdots X_m^{i_m}$ in $\mathscr{I}$ such that $\sum_{j=1}^ma_ji_j=\mathfrak{m}$ with $X_1^{2h_1^{(5)}}\cdots X_m^{2h_m^{(5)}}+\cdots$. 

\item If $\mathfrak{m}\notin2\langle A_m\rangle$ and there is no term $\mathfrak{p}_{k_1,\dots,k_m}X_1^{k_1}\cdots X_m^{k_m}$ such that $\mathfrak{p}_{k_1,\dots,k_m}\neq0$ and $\sum_{j=1}^ma_jk_j=\mathfrak{m}$ in (\ref{2023.12.3.1}), 
we take a sequence $(h_1^{(6)},\dots,h_m^{(6)})\in {\mathbb Z}_{\geq 0}^m$ such that $\sum_{j=1}^ma_jh_j^{(6)}=\mathfrak{m}$. 
By applying the operation (A), we replace all $X_1^{i_1}\cdots X_m^{i_m}$ in $\mathscr{I}$ such that $\sum_{j=1}^ma_ji_j=\mathfrak{m}$ with $X_1^{h_1^{(6)}}\cdots X_m^{h_m^{(6)}}+\cdots$. 
\end{itemize}
By repeating the above operations, we can transform $f$ into $\mathscr{F}$. 
From Lemma \ref{2023.11.23.456}, we have $\mathcal{S}(f)\subseteq\mathcal{S}(\mathscr{F})$. 
\end{proof}

For a non-negative integer $n$, we define $\chi(n)$ by 
\[\chi(n)=\left\{\begin{array}{ll}0 & \quad \mbox{if $n$ is even} \\ 1 & \quad \mbox{if $n$ is odd}\end{array}\right..\]
Let $\mathfrak{B}$ be the set of $\lambda_j^{(i)}$ which is the coefficient of $X_1^{j_1}\cdots X_m^{j_m}$ with $\sum_{k=1}^m\chi(j_k)\ge2$ in (\ref{eq-2-5}). 

\begin{prop}\label{2022.5.29.1}
If $\sum_{j=1}^{i-1}\chi(\ell_{i,j})\le1$ for any $2\le i\le m$, where $\ell_{i,j}$ is defined in (\ref{homo}), then we have $\sigma(u)\in\mathbb{Z}[{\boldsymbol \l}_{\mathfrak{B}}]\langle\langle u \rangle\rangle$. 
\end{prop}

\begin{proof}
We take an integer $k$ such that $1\le k\le m$ and $a_k$ is odd. 
From $\sum_{j=1}^{i-1}\chi(\ell_{i,j})\le1$, for any $2\le i\le m$ and $1\le j\le m$, we have $\frac{\partial F_i}{\partial X_j}\in\mathcal{P}(\mathbb{Z}[{\boldsymbol \l}_{\mathfrak{B}}])$. 
Thus, we have $\det G_k\in\mathcal{P}(\mathbb{Z}[{\boldsymbol \l}_{\mathfrak{B}}])$. Therefore, we have $\mathfrak{B}\in\mathcal{S}(\det G_k)$. 
From Lemma \ref{2023.10.22.2222}, we obtain the statement of the proposition. 
\end{proof}

\begin{rem}
We can apply Theorems \ref{maintheorem} and \ref{2023.11.26.14598} to any telescopic curve. 
%Theorem \ref{2023.11.26.14598} (iii) implies that a polynomial satisfying the condition (i) of Theorem \ref{2023.11.26.14598} gives the best result among all the polynomials in $\mathbb{Z}[{\boldsymbol \l}][X_1,\dots,X_m]$ which are congruent to $\det G_k$ modulo $I$. 
In the case that the condition of Proposition \ref{2022.5.29.1} holds, Theorem \ref{2023.11.26.14598} gives the better result than Proposition \ref{2022.5.29.1}. 
On the other hand, in Proposition \ref{2022.5.29.1}, we do not need to calculate $\det G_k$. 
\end{rem}

\begin{lem}\label{2023.12.10.1}
We consider the case of $m=2$.  
For $(i_1,i_2), (j_1,j_2)\in{\mathbb Z}_{\geq 0}^2$, if $a_1i_1+a_2i_2=a_1j_1+a_2j_2<a_1a_2$, then we have 
$(i_1,i_2)=(j_1,j_2)$. 
\end{lem}

\begin{proof}
From $a_1i_1+a_2i_2=a_1j_1+a_2j_2<a_1a_2$, we have $i_2, j_2<a_1$. 
Thus, we have $(i_1,i_2), (j_1,j_2)\in B(A_2)$. 
From Lemma \ref{lem-2-1}, we have $(i_1,i_2)=(j_1,j_2)$. 
\end{proof}

\begin{rem}
We consider the $(n,s)$ curve. 
The polynomial $F_2$ defining the $(n,s)$ curve is given in Example \ref{2022.5.29.2} (i). 
Let $\mathfrak{C}$ be the set of $\lambda_j$ which is the coefficient of $X_1^{j_1}X_2^{j_2}$ with $(j_1,j_2)=(\mbox{odd}, \mbox{odd})$ in (\ref{2023.12.9.1}).  
In \cite[Theorem 2.3]{O-2018}, it is proved that we have $\sigma(u)\in\mathbb{Z}[{\boldsymbol \l}_{\mathfrak{C}}]\langle\langle u \rangle\rangle$.  
\end{rem}

\begin{rem}\label{2023.12.19.1}
We apply Theorem \ref{maintheorem} (i) to the $(n,s)$ curve. 
In the case of $(n,s)=(\mbox{odd}, \mbox{odd})$, $\mathfrak{A}$ is the set of $\lambda_j$ which is the coefficient of $X_1^{j_1}X_2^{j_2}$ with $(j_1,j_2)=(\mbox{odd}, \mbox{odd})$ or $(\mbox{even}, \mbox{even})$. 
In the case of $(n,s)=(\mbox{odd}, \mbox{even})$, $\mathfrak{A}$ is the set of $\lambda_j$ which is the coefficient of $X_1^{j_1}X_2^{j_2}$ with $(j_1,j_2)=(\mbox{odd}, \mbox{odd})$ or $(\mbox{odd}, \mbox{even})$. 
In the case of $(n,s)=(\mbox{even}, \mbox{odd})$, $\mathfrak{A}$ is the set of $\lambda_j$ which is the coefficient of $X_1^{j_1}X_2^{j_2}$ with $(j_1,j_2)=(\mbox{odd}, \mbox{odd})$ or $(\mbox{even}, \mbox{odd})$. 
Therefore, \cite[Theorem 2.3]{O-2018} gives the better result than Theorem \ref{maintheorem} (i) for $(n,s)$ curves. 
\end{rem}

\begin{rem}\label{2023.12.19.2}
We apply Theorem \ref{2023.11.26.14598} to the $(n,s)$ curve. 
First, we consider the case that $n$ is odd. 
We have 
\begin{equation}
\det G_1=nX_2^{n-1}-\sum j_2\lambda_{ns-nj_1-sj_2}X_1^{j_1}X_2^{j_2-1},\label{2023.12.9.22}
\end{equation}
where the summation is taken over $(j_1,j_2)\in{\mathbb Z}_{\geq 0}^2$ such that $nj_1+sj_2<ns$ and $j_2\ge1$. 
From Lemma \ref{2023.12.10.1}, we find that the polynomial (\ref{2023.12.9.22}) is in the form of (\ref{2023.12.3.1}). 
We can express the polynomial (\ref{2023.12.9.22}) in the form of $\mathscr{J}_1+\mathscr{J}_2$ with $\mathscr{J}_2\in\mathcal{P}(\mathbb{Z}[{\boldsymbol \l}])$ and 
\[\mathscr{J}_1=\sum\lambda_{ns-nj_1-sj_2}X_1^{j_1}X_2^{j_2-1},\]
where the summation is taken over $(j_1,j_2)\in{\mathbb Z}_{\geq 0}^2$ such that $(j_1,j_2)=(\mbox{odd}, \mbox{odd})$ and $nj_1+sj_2<ns$. 
From Lemma \ref{2023.12.31.1111}, we have $\mathcal{S}(\det G_1)=\mathcal{S}(\mathscr{J}_1)$. 
Thus, we obtain the same result as \cite[Theorem 2.3]{O-2018}. 
Next, we consider the case that $s$ is odd. 
We have 
\begin{equation}
\det G_2=-sX_1^{s-1}-\sum j_1\lambda_{ns-nj_1-sj_2}X_1^{j_1-1}X_2^{j_2},\label{2023.12.10.953}
\end{equation}
where the summation is taken over $(j_1,j_2)\in{\mathbb Z}_{\geq 0}^2$ such that $nj_1+sj_2<ns$ and $j_1\ge1$. 
From Lemma \ref{2023.12.10.1}, we find that the polynomial (\ref{2023.12.10.953}) is in the form of (\ref{2023.12.3.1}). 
We can express the polynomial (\ref{2023.12.10.953}) in the form of $\mathscr{K}_1+\mathscr{K}_2$ with $\mathscr{K}_2\in\mathcal{P}(\mathbb{Z}[{\boldsymbol \l}])$ and 
\[\mathscr{K}_1=\sum\lambda_{ns-nj_1-sj_2}X_1^{j_1-1}X_2^{j_2},\]
where the summation is taken over $(j_1,j_2)\in{\mathbb Z}_{\geq 0}^2$ such that $(j_1,j_2)=(\mbox{odd}, \mbox{odd})$ and $nj_1+sj_2<ns$. 
From Lemma \ref{2023.12.31.1111}, we have $\mathcal{S}(\det G_2)=\mathcal{S}(\mathscr{K}_1)$. 
Thus, we obtain the same result as \cite[Theorem 2.3]{O-2018}. 
Therefore, Theorem \ref{2023.11.26.14598} includes \cite[Theorem 2.3]{O-2018}. 
\end{rem}

\begin{rem}
We can apply Proposition \ref{2022.5.29.1} to the $(n,s)$ curve. 
Applying Proposition \ref{2022.5.29.1} to the $(n,s)$ curve, we have \cite[Theorem 2.3]{O-2018} immediately. 
We consider the case of $m=3$. 
The sequence $A_3$ can be uniquely expressed as $A_3=(a_1'd, a_2'd, k_1a_1'+k_2a_2')$, where $a_1', a_2', d\in\mathbb{N}$ and $k_1,k_2\in{\mathbb Z}_{\geq 0}$ such that 
$\mathrm{gcd}(a_1',a_2')=\mathrm{gcd}(d,k_1a_1'+k_2a_2')=1$, $a_1'd, a_2'd, k_1a_1'+k_2a_2'\geq2$, and $k_2<a_1'$. 
Then we have $\ell_{3,1}=k_1$ and $\ell_{3,2}=k_2$. 
If $k_1$ or $k_2$ is even, then $A_3$ satisfies the condition of Proposition \ref{2022.5.29.1}. 
For example, for any non-negative integer $k$, $A_3=(4,6,4k+3)$ satisfies the condition of Proposition \ref{2022.5.29.1}. 
On the other hand, if both $k_1$ and $k_2$ are odd, then $A_3$ does not satisfy the condition of Proposition \ref{2022.5.29.1}. 
For example, for any positive integer $k$, $A_3=(4,6,4k+1)$ does not satisfy the condition of Proposition \ref{2022.5.29.1}. 
We can apply Proposition \ref{2022.5.29.1} to the telescopic curve considered in Example \ref{2022.5.29.2} (v). 
\end{rem}

\begin{ex}\label{2023.12.19.3}
We consider the case of $m=3$ and $A_3=(4,6,5)$. 
The polynomials $F_2$ and $F_3$ defining this curve are given in Example \ref{2022.5.29.2} (ii). 
We have $\mathfrak{A}=\{\lambda_1^{(2)}, \lambda_3^{(2)}, \lambda_7^{(2)}, \lambda_1^{(3)}, \lambda_5^{(3)}\}$. 
From Theorem \ref{maintheorem} (i), we have $\sigma(u)\in\mathbb{Z}[{\boldsymbol \l}_{\mathfrak{A}}]\langle\langle u \rangle\rangle$.  
By applying the operation (A), we can transform $\det G_3$ into
\begin{align*}
\mathscr{L}&=5X_2^2+2(\lambda_1^{(3)}-2\lambda_1^{(2)})X_2X_3+(4\lambda_2^{(3)}-\lambda_1^{(2)}\lambda_1^{(3)}-3\lambda_2^{(2)})X_3^2\\
&+2(\lambda_2^{(2)}\lambda_1^{(3)}-\lambda_1^{(2)}\lambda_2^{(3)}-\lambda_3^{(2)}-2\lambda_1^{(3)}\lambda_2^{(3)})X_1X_3\\
&+\left(\lambda_2^{(2)}\lambda_2^{(3)}-\lambda_4^{(2)}+3\lambda_4^{(3)}-4(\lambda_2^{(3)})^2\right)X_1^2\\
&+2(\lambda_6^{(3)}+2\lambda_2^{(2)}\lambda_4^{(3)}-2\lambda_6^{(2)}-2\lambda_2^{(3)}\lambda_4^{(3)})X_2\\
&+(-\lambda_1^{(2)}\lambda_6^{(3)}+3\lambda_2^{(2)}\lambda_5^{(3)}+\lambda_3^{(2)}\lambda_4^{(3)}-\lambda_6^{(2)}\lambda_1^{(3)}-3\lambda_7^{(2)}-4\lambda_2^{(3)}\lambda_5^{(3)})X_3\\
&+2(\lambda_2^{(2)}\lambda_6^{(3)}+\lambda_4^{(2)}\lambda_4^{(3)}-\lambda_6^{(2)}\lambda_2^{(3)}-\lambda_8^{(2)}-2\lambda_2^{(3)}\lambda_6^{(3)})X_1\\
&+3\lambda_2^{(2)}\lambda_{10}^{(3)}-\lambda_6^{(2)}\lambda_6^{(3)}+\lambda_8^{(2)}\lambda_4^{(3)}-3\lambda_{12}^{(2)}-4\lambda_2^{(3)}\lambda_{10}^{(3)}.
\end{align*}
The polynomial $\mathscr{L}$ is in the form of (\ref{2023.12.3.1}). 
We have $\mathscr{L}=\mathscr{L}_1+\mathscr{L}_2$ with $\mathscr{L}_2\in\mathcal{P}(\mathbb{Z}[{\boldsymbol \l}])$ and 
\[\mathscr{L}_1=(\lambda_1^{(2)}\lambda_6^{(3)}+\lambda_2^{(2)}\lambda_5^{(3)}+\lambda_3^{(2)}\lambda_4^{(3)}+\lambda_6^{(2)}\lambda_1^{(3)}+\lambda_7^{(2)})X_3.\]
From Lemma \ref{2023.12.31.1111}, we have $\mathcal{S}(\mathscr{L})=\mathcal{S}(\mathscr{L}_1)$. 
Let 
\begin{align*}
\mathfrak{D}_1&=\{\lambda_1^{(2)}, \lambda_2^{(2)}, \lambda_3^{(2)}, \lambda_6^{(2)}, \lambda_7^{(2)}\},\qquad \mathfrak{D}_2=\{\lambda_1^{(2)}, \lambda_2^{(2)}, \lambda_3^{(2)}, \lambda_7^{(2)}, \lambda_1^{(3)}\},\\
\mathfrak{D}_3&=\{\lambda_1^{(2)}, \lambda_2^{(2)}, \lambda_6^{(2)}, \lambda_7^{(2)}, \lambda_4^{(3)}\},\qquad \mathfrak{D}_4=\{\lambda_1^{(2)}, \lambda_2^{(2)}, \lambda_7^{(2)}, \lambda_1^{(3)}, \lambda_4^{(3)}\},\\
\mathfrak{D}_5&=\{\lambda_1^{(2)}, \lambda_3^{(2)}, \lambda_6^{(2)}, \lambda_7^{(2)}, \lambda_5^{(3)}\},\qquad \mathfrak{D}_6=\{\lambda_1^{(2)}, \lambda_3^{(2)}, \lambda_7^{(2)}, \lambda_1^{(3)}, \lambda_5^{(3)}\},\\
\mathfrak{D}_7&=\{\lambda_1^{(2)}, \lambda_6^{(2)}, \lambda_7^{(2)}, \lambda_4^{(3)}, \lambda_5^{(3)}\},\qquad \mathfrak{D}_8=\{\lambda_1^{(2)}, \lambda_7^{(2)}, \lambda_1^{(3)}, \lambda_4^{(3)}, \lambda_5^{(3)}\},\\
\mathfrak{D}_9&=\{\lambda_2^{(2)}, \lambda_3^{(2)}, \lambda_6^{(2)}, \lambda_7^{(2)}, \lambda_6^{(3)}\},\qquad \mathfrak{D}_{10}=\{\lambda_2^{(2)}, \lambda_3^{(2)}, \lambda_7^{(2)}, \lambda_1^{(3)}, \lambda_6^{(3)}\},\\
\mathfrak{D}_{11}&=\{\lambda_2^{(2)}, \lambda_6^{(2)}, \lambda_7^{(2)}, \lambda_4^{(3)}, \lambda_6^{(3)}\},\qquad \mathfrak{D}_{12}=\{\lambda_2^{(2)}, \lambda_7^{(2)}, \lambda_1^{(3)}, \lambda_4^{(3)}, \lambda_6^{(3)}\},\\
\mathfrak{D}_{13}&=\{\lambda_3^{(2)}, \lambda_6^{(2)}, \lambda_7^{(2)}, \lambda_5^{(3)}, \lambda_6^{(3)}\},\qquad \mathfrak{D}_{14}=\{\lambda_3^{(2)}, \lambda_7^{(2)}, \lambda_1^{(3)}, \lambda_5^{(3)}, \lambda_6^{(3)}\},\\
\mathfrak{D}_{15}&=\{\lambda_6^{(2)}, \lambda_7^{(2)}, \lambda_4^{(3)}, \lambda_5^{(3)}, \lambda_6^{(3)}\},\qquad \mathfrak{D}_{16}=\{\lambda_7^{(2)}, \lambda_1^{(3)}, \lambda_4^{(3)}, \lambda_5^{(3)}, \lambda_6^{(3)}\}.
\end{align*}
From Theorem \ref{2023.11.26.14598}, for $1\le i\le 16$, we have $\sigma(u)\in\mathbb{Z}[{\boldsymbol \l}_{\mathfrak{D}_i}]\langle\langle u \rangle\rangle$. 
We have $\mathfrak{A}=\mathfrak{D}_6$. 
We cannot apply Proposition \ref{2022.5.29.1} to this curve. 
%\[\det G_3=\begin{vmatrix}-3x_1^2-\lambda_{1,1,0}^{(2)}x_2-\lambda_{1,0,1}^{(2)}x_3-2\lambda_{2,0,0}^{(2)}x_1-\lambda_{1,0,0}^{(2)} & 2x_2-\lambda_{0,1,1}^{(2)}x_3-\lambda_{1,1,0}^{(2)}x_1-\lambda_{0,1,0}^{(2)} \\
%-x_2-\lambda_{1,0,1}^{(3)}x_3-2\lambda_{2,0,0}^{(3)}x_1-\lambda_{1,0,0}^{(3)} & -x_1-\lambda_{0,1,0}^{(3)}\end{vmatrix}\]
\end{ex}

\begin{ex}\label{2023.12.19.4}
We consider the case of $m=3$ and $A_3=(4,6,7)$. 
The polynomials $F_2$ and $F_3$ defining this curve are given in Example \ref{2022.5.29.2} (iii). 
We have $\mathfrak{A}=\{\lambda_1^{(2)}, \lambda_5^{(2)}, \lambda_1^{(3)}, \lambda_3^{(3)}, \lambda_7^{(3)}\}$. 
From Theorem \ref{maintheorem} (i), we have $\sigma(u)\in\mathbb{Z}[{\boldsymbol \l}_{\mathfrak{A}}]\langle\langle u \rangle\rangle$.  
By applying the operation (A), we can transform $\det G_3$ into a polynomial $\mathscr{M}$ in the form of (\ref{2023.12.3.1}).  
Here, we omit the explicit expression of $\mathscr{M}$.  
We have $\mathscr{M}=\mathscr{M}_1+\mathscr{M}_2$ with $\mathscr{M}_2\in\mathcal{P}(\mathbb{Z}[{\boldsymbol \l}])$ and 
\[
\mathscr{M}_1=(\lambda_1^{(2)}+\lambda_1^{(3)})X_1^2X_3+(\lambda_1^{(2)}\lambda_8^{(3)}+\lambda_2^{(2)}\lambda_7^{(3)}+\lambda_5^{(2)}\lambda_4^{(3)}+\lambda_6^{(2)}\lambda_3^{(3)}+\lambda_8^{(2)}\lambda_1^{(3)})X_3. 
\]
From Lemma \ref{2023.12.31.1111}, we have $\mathcal{S}(\mathscr{M})=\mathcal{S}(\mathscr{M}_1)$. 
Let 
\begin{align*}
\mathfrak{E}_1=\{\lambda_1^{(2)}, \lambda_2^{(2)}, \lambda_5^{(2)}, \lambda_6^{(2)}, \lambda_1^{(3)}\},\qquad \mathfrak{E}_2=\{\lambda_1^{(2)}, \lambda_2^{(2)}, \lambda_5^{(2)}, \lambda_1^{(3)}, \lambda_3^{(3)}\},\\
\mathfrak{E}_3=\{\lambda_1^{(2)}, \lambda_2^{(2)}, \lambda_6^{(2)}, \lambda_1^{(3)}, \lambda_4^{(3)}\},\qquad \mathfrak{E}_4=\{\lambda_1^{(2)}, \lambda_2^{(2)}, \lambda_1^{(3)}, \lambda_3^{(3)}, \lambda_4^{(3)}\},\\
\mathfrak{E}_5=\{\lambda_1^{(2)}, \lambda_5^{(2)}, \lambda_6^{(2)}, \lambda_1^{(3)}, \lambda_7^{(3)}\},\qquad \mathfrak{E}_6=\{\lambda_1^{(2)}, \lambda_5^{(2)}, \lambda_1^{(3)}, \lambda_3^{(3)}, \lambda_7^{(3)}\},\\
\mathfrak{E}_7=\{\lambda_1^{(2)}, \lambda_6^{(2)}, \lambda_1^{(3)}, \lambda_4^{(3)}, \lambda_7^{(3)}\},\qquad \mathfrak{E}_8=\{\lambda_1^{(2)}, \lambda_1^{(3)}, \lambda_3^{(3)}, \lambda_4^{(3)}, \lambda_7^{(3)}\}. 
\end{align*}
From Theorem \ref{2023.11.26.14598}, for $1\le i\le 8$, we have $\sigma(u)\in\mathbb{Z}[{\boldsymbol \l}_{\mathfrak{E}_i}]\langle\langle u \rangle\rangle$. 
We have $\mathfrak{A}=\mathfrak{E}_6$. 
We can apply Proposition \ref{2022.5.29.1} to this curve. 
We have $\mathfrak{B}=\mathfrak{E}_4$. 
\end{ex}

\begin{ex}\label{2023.12.19.5}
We consider the case of $m=3$ and $A_3=(6,9,5)$. 
The polynomials $F_2$ and $F_3$ defining this curve are given in Example \ref{2022.5.29.2} (iv). 
We have $\mathfrak{A}=\{\lambda_1^{(2)}, \lambda_3^{(2)}, \lambda_7^{(2)}, \lambda_9^{(2)}, \lambda_{13}^{(2)}, \lambda_1^{(3)}, \lambda_3^{(3)}, \lambda_5^{(3)}, \lambda_9^{(3)}, \lambda_{15}^{(3)}\}$. 
From Theorem \ref{maintheorem} (i), we have $\sigma(u)\in\mathbb{Z}[{\boldsymbol \l}_{\mathfrak{A}}]\langle\langle u \rangle\rangle$. 
By applying the operation (A), we can transform $\det G_2$ into a polynomial $\mathscr{N}$ in the form of (\ref{2023.12.3.1}). 
Here, we omit the explicit expression of $\mathscr{N}$.  
We have $\mathscr{N}=\mathscr{N}_1+\mathscr{N}_2$ with $\mathscr{N}_2\in\mathcal{P}(\mathbb{Z}[{\boldsymbol \l}])$ and 
\begin{align*}
\mathscr{N}_1&=(\lambda_1^{(2)}+\lambda_1^{(3)})X_1^2X_2+(\lambda_2^{(2)}\lambda_1^{(3)}+\lambda_3^{(2)})X_2X_3^2\\
&+(\lambda_3^{(2)}\lambda_{10}^{(3)}+\lambda_4^{(2)}\lambda_9^{(3)}+\lambda_7^{(2)}\lambda_6^{(3)}+\lambda_9^{(2)}\lambda_4^{(3)}
+\lambda_{12}^{(2)}\lambda_1^{(3)}+\lambda_{13}^{(2)})X_2. 
\end{align*}
From Lemma \ref{2023.12.31.1111}, we have $\mathcal{S}(\mathscr{N})=\mathcal{S}(\mathscr{N}_1)$. 
Let 
\begin{align*}
\mathfrak{F}_1&=\{\lambda_1^{(2)}, \lambda_3^{(2)}, \lambda_4^{(2)}, \lambda_7^{(2)}, \lambda_9^{(2)}, \lambda_{13}^{(2)}, \lambda_1^{(3)}\},
\qquad \mathfrak{F}_2=\{\lambda_1^{(2)}, \lambda_3^{(2)}, \lambda_4^{(2)}, \lambda_7^{(2)}, \lambda_{13}^{(2)}, \lambda_1^{(3)}, \lambda_4^{(3)}\},\\
\mathfrak{F}_3&=\{\lambda_1^{(2)}, \lambda_3^{(2)}, \lambda_4^{(2)}, \lambda_9^{(2)}, \lambda_{13}^{(2)}, \lambda_1^{(3)}, \lambda_6^{(3)}\},
\qquad \mathfrak{F}_4=\{\lambda_1^{(2)}, \lambda_3^{(2)}, \lambda_4^{(2)}, \lambda_{13}^{(2)}, \lambda_1^{(3)}, \lambda_4^{(3)}, \lambda_6^{(3)}\},\\
\mathfrak{F}_5&=\{\lambda_1^{(2)}, \lambda_3^{(2)}, \lambda_7^{(2)}, \lambda_9^{(2)}, \lambda_{13}^{(2)}, \lambda_1^{(3)}, \lambda_9^{(3)}\}
,\qquad \mathfrak{F}_6=\{\lambda_1^{(2)}, \lambda_3^{(2)}, \lambda_7^{(2)}, \lambda_{13}^{(2)}, \lambda_1^{(3)}, \lambda_4^{(3)}, \lambda_9^{(3)}\},\\
\mathfrak{F}_7&=\{\lambda_1^{(2)}, \lambda_3^{(2)}, \lambda_9^{(2)}, \lambda_{13}^{(2)}, \lambda_1^{(3)}, \lambda_6^{(3)}, \lambda_9^{(3)}\}
,\qquad \mathfrak{F}_8=\{\lambda_1^{(2)}, \lambda_3^{(2)}, \lambda_{13}^{(2)}, \lambda_1^{(3)}, \lambda_4^{(3)}, \lambda_6^{(3)}, \lambda_9^{(3)}\}.
\end{align*}
From Theorem \ref{2023.11.26.14598}, for $1\le i\le 8$, we have $\sigma(u)\in\mathbb{Z}[{\boldsymbol \l}_{\mathfrak{F}_i}]\langle\langle u \rangle\rangle$. 
We have $\mathfrak{F}_5\subsetneq\mathfrak{A}$. 
By applying the operation (A), we can transform $\det G_3$ into a polynomial $\mathscr{O}$ in the form of  (\ref{2023.12.3.1}).  
Here, we omit the explicit expression of $\mathscr{O}$.  
We have $\mathscr{O}=\mathscr{O}_1+\mathscr{O}_2$ with $\mathscr{O}_2\in\mathcal{P}(\mathbb{Z}[{\boldsymbol \l}])$ and 
\begin{align*}
\mathscr{O}_1&=(\lambda_1^{(2)}+\lambda_1^{(3)})X_1^2X_3+(\lambda_2^{(2)}\lambda_1^{(3)}+\lambda_3^{(2)})X_1X_2+\left(\lambda_3^{(2)}\lambda_1^{(3)}+\lambda_2^{(2)}(\lambda_1^{(3)})^2\right)X_2X_3\\
&+(\lambda_3^{(2)}\lambda_4^{(3)}+\lambda_2^{(2)}\lambda_1^{(3)}\lambda_4^{(3)})X_1X_3+(\lambda_3^{(2)}\lambda_6^{(3)}+\lambda_2^{(2)}\lambda_1^{(3)}\lambda_6^{(3)})X_2+(\lambda_3^{(2)}\lambda_9^{(3)}+\lambda_2^{(2)}\lambda_1^{(3)}\lambda_9^{(3)})X_1\\
&+(\lambda_4^{(2)}\lambda_9^{(3)}+\lambda_7^{(2)}\lambda_6^{(3)}+\lambda_9^{(2)}\lambda_4^{(3)}+\lambda_{12}^{(2)}\lambda_1^{(3)}+\lambda_{13}^{(2)}+\lambda_2^{(2)}\lambda_1^{(3)}\lambda_{10}^{(3)})X_3. 
\end{align*}
We obtain the same result as the case of $\det G_2$. 
We cannot apply Proposition \ref{2022.5.29.1} to this curve. 

\end{ex}

\begin{rem}
For the above three curves, Theorem \ref{2023.11.26.14598} gives the better result than Theorem \ref{maintheorem} (i). 
\end{rem}

\vspace{2ex}

{\bf Acknowledgements.} 
This work was supported by JSPS KAKENHI Grant Number JP21K03296 and was partly supported by MEXT Promotion of Distinctive Joint Research Center Program JPMXP0723833165.  

%%% Please do NOT use ``\bysame'' command in your bibliography list.


\begin{thebibliography}{9}

\bibitem{Aya1}
T. Ayano, Sigma functions for telescopic curves, Osaka J. Math., \textbf{51} (2014), 459--480.

\bibitem{Aya3}
T. Ayano, On Jacobi Inversion Formulae for Telescopic Curves, SIGMA, \textbf{12} (2016), 086, 21 pages. 

\bibitem{AB2020}
T. Ayano and V. M. Buchstaber, Analytical and number-theoretical properties of the two-dimensional sigma function, Chebyshevskii Sb., \textbf{21} (2020), 9--50. 

\bibitem{Aya2}
T. Ayano and A. Nakayashiki, On Addition Formulae for Sigma Functions of Telescopic Curves, SIGMA, \textbf{9} (2013), 046, 14 pages. 

\bibitem{Baker2}
H. F. Baker, An Introduction to the Theory of Multiply Periodic Functions, Cambridge University Press, 1907.

\bibitem{Baker}
H. F. Baker, Abelian Functions. Abel's theorem and the allied theory of theta functions, Cambridge University Press, 1995. 

\bibitem{BEN2020}
J. Bernatska, V. Enolski and A. Nakayashiki, Sato Grassmannian and Degenerate Sigma Function, Commun. Math. Phys., \textbf{374} (2020), 627--660. 

\bibitem{BertinCarbonne} 
J. Bertin and P. Carbonne, Semi-Groupes d'Entiers et Application aux Branches, J. Algebra, \textbf{49} (1977), 81--95. 

\bibitem{Bolza0}
O. Bolza, Proof of Brioschi's Recursion Formula for the Expansion of the Even $\sigma$-Functions of Two Variables, Amer. J. Math., \textbf{21} (1899), 175--190. 

\bibitem{Bolza}
O. Bolza, Remarks Concerning the Expansions of the Hyperelliptic Sigma-Functions, Amer. J. Math., \textbf{22} (1900), 101--112. 

\bibitem{BEL-97-1}    %%% 1
V. M. Buchstaber, V. Z. Enolskii and D. V. Leykin, Hyperelliptic Kleinian Functions and Applications, in Solitons, Geometry, and Topology: On the Crossroad, Amer. Math. Soc. Transl. Ser. 2, \textbf{179}, Amer. Math. Soc., Providence, RI, 1997, 1–33. 

%\bibitem{BEL-97-2}    %%% 2
%V.~M.~Buchstaber, V.~Z.~Enolskii, D.~V.~Leikin,\,Kleinian functions, hyperelliptic Jacobians and
%applications, Reviews in Mathematics and Math. Physics, I.~M.~Krichever, S.~P.~ Novikov Editors, v.~10, part~2, Gordon and
%Breach, London, 1997, 3--120.

\bibitem{BEL-97-2}
V. M. Buchstaber, V. Z. Enolskii and D. V. Leykin, Hyperelliptic Abelian Functions, 1997, available at 

\begin{verbatim}
https://www.researchgate.net/publication/266955336_Kleinian_functions
_hyperelliptic_Jacobians_and_applications
\end{verbatim}


\bibitem{BEL-99-R}  %%%  7
V. M. Buchstaber, V. Z. Enolskii and D. V. Leykin, Rational Analogs of Abelian Functions, Funct. Anal. Appl., \textbf{33} (1999), 83--94. 


\bibitem{BEL-2000}
V. M. Buchstaber, V. Z. Enolskii and D. V. Leykin, Uniformization of Jacobi Varieties of Trigonal Curves and Nonlinear Differential Equations, Funct. Anal. Appl., \textbf{34} (2000), 159--171. 

\bibitem{BEL-2012}  %%%  8
V. M. Buchstaber, V. Z. Enolski and D. V. Leykin, Multi-Dimensional Sigma-Functions, arXiv:1208.0990, (2012). 

\bibitem{BEL-2018}
V. M. Buchstaber, V. Z. Enolski and D. V. Leykin, $\sigma$-Functions: Old and New Results, In R. Donagi and T. Shaska (Eds.), Integrable Systems and Algebraic Geometry (London Mathematical Society Lecture Note Series 459), Cambridge University Press, 
2020, 175--214.  

%\bibitem{BL-2004}
%V.~M.~Buchstaber, D.~V.~Leykin, \emph{Heat Equations in a
%Nonholonomic Frame}, Funct. Anal. Appl., 38:2, 2004, 88--101.

\bibitem{BL-2005}
V. M. Buchstaber and D. V. Leykin, Addition Laws on Jacobian Varieties of Plane Algebraic Curves, Proc. Steklov Inst. Math., \textbf{251} (2005), 49--120.

%\bibitem{BL-2008}
%V.~M.~Buchstaber, D.~V.~Leykin, \emph{Solution of the problem of
%differentiation of Abelian functions over parameters for families of
%$(n,s)$-curves}, Funct. Anal. Appl., 42:4, 2008,
%268--278.

\bibitem{BEL-99-2}
V. M. Buchstaber, D. V. Leykin and V. Z. Enolskii, $\sigma$-functions of $(n,s)$-curves, Russ. Math. Surv., \textbf{54} (1999), 628--629. 

\bibitem{Bunkova1}
E. Yu. Bunkova, Weierstrass Sigma Function Coefficients Divisibility Hypothesis, arXiv:1701.00848, (2017).

\bibitem{CN}
K. Cho and A. Nakayashiki, Differential Structure of Abelian Functions, Int. J. Math., \textbf{19} (2008), 145--171. 

\bibitem{EEL}
J. C. Eilbeck, V. Z. Enolskii and D. V. Leykin, On the Kleinian Construction of Abelian Functions of Canonical Algebraic Curves, In proceedings of the Conference SIDE III: 
Symmetries and Integrability of Difference Equations (Sabaudia, 1998), CRM Proc. Lecture Notes, \textbf{25}, 
Amer. Math. Soc., Providence, RI, 2000, 121--138. 

\bibitem{O5}
J. C. Eilbeck, J. Gibbons, Y. \^Onishi and S. Yasuda, Theory of Heat Equations for Sigma Functions, arXiv:1711.08395, (2017). 

\bibitem{EO2019}
J. C. Eilbeck and Y. \^Onishi, Recursion Relations on the Power Series Expansion of the Universal Weierstrass Sigma Function, RIMS K\^oky\^uroku Bessatsu, \textbf{B78} (2020), 077--098. 

\bibitem{Fay-1973}
J. D. Fay, Theta Functions on Riemann Surfaces, Lecture Notes in Math., \textbf{352}, Springer Berlin, Heidelberg, 1973.

\bibitem{KP}
C. Kirfel and R. Pellikaan, The minimum distance of codes in an array coming from telescopic semigroups, IEEE Trans. Inform. Theory, \textbf{41} (1995), 1720--1732.

\bibitem{Kl1}
F. Klein, Ueber hyperelliptische Sigmafunctionen, Math. Ann., \textbf{27} (1886), 431--464.

\bibitem{Kl2}
F. Klein, Ueber hyperelliptische Sigmafunctionen, Math. Ann., \textbf{32} (1888), 351--380.

\bibitem{Miu}
S. Miura, Linear Codes on Affine Algebraic Curves, IEICE Trans., \textbf{J81-A} (1998), 1398--1421 (in Japanese).


\bibitem{N1}   %%%  13
A. Nakayashiki, On Algebraic Expressions of Sigma Functions for $(n,s)$ Curves, Asian J. Math., \textbf{14} (2010), 175--212. 

\bibitem{N-2010-2}
A. Nakayashiki, Sigma Function as A Tau Function,  Int. Math. Res. Not., \textbf{2010} (2010), 373--394.

\bibitem{N-2011}
A. Nakayashiki, On hyperelliptic abelian functions of genus 3, J. Geom. Phys., \textbf{61} (2011), 961--985. 

\bibitem{N-2018}
A. Nakayashiki, Degeneration of trigonal curves and solutions of the KP-hierarchy, Nonlinearity, \textbf{31} (2018), 3567–3590.  

\bibitem{NW}
A. Nijenhuis and H. S. Wilf, Representations of Integers by Linear Forms in Nonnegative Integers, J. Number Theory, \textbf{4} (1972),  98--106. 
%\bibitem{Nishijima}
%D. Nishijima, Order counting algorithm for Fermat curves and Klein curves using $p$-adic cohomology, Master's Thesis, Osaka University, 2008. 

\bibitem{Ouniversal}
Y. \^Onishi, Universal Elliptic Functions, (in Japanese), 2016, available at

\begin{verbatim}
https://math-meijo-u.heteml.net/#publications
\end{verbatim}

\bibitem{O-2018}
Y. \^Onishi, Arithmetical Power Series Expansion of the Sigma Function for a Plane Curve, Proc. Edinburgh Math. Soc., \textbf{61} (2018), 995--1022. 

\bibitem{MSDCS}
M. Schiffer and D. C. Spencer, Functionals of Finite Riemann Surfaces, Princeton University Press, 1954. 


%\bibitem{Weir}
%K.~Weierstrass, \emph{Zur theorie der elliptischen functionen}, Mathmatische Werke, Bd.2: 245--255, (1894).


\end{thebibliography}
\end{document}